\newcommand{\m}{{\sf J(\mu)}}
\newcommand{\s}{{\sf S4}}
\newcommand{\LP}{{\sf LP}}
\newcommand{\GL}{{\sf GL}}
\newcommand{\PA}{{\sf PA}}
\newcommand{\M}{{\mathcal M}}
\newcommand{\E}{{\mathcal E}}
\newcommand{\I}{{\mathcal I}}
\newcommand{\V}{{\mathcal V}}
\newcommand{\CS}{\mathcal{CS}}
\newcommand{\JL}{{\sf JL}}
\newcommand{\QLP}{{\sf QLP}}
\newcommand{\gn}[1]{\left\ulcorner #1\right \urcorner\,\!}
 \def\f{\frac}
 \def\di{\displaystyle}
 \def\r{\rightarrow}
 \def\b{\Box}
 \def\vd{\vdash}
\begin{document}
\title{A Note on Fixed Points in Justification Logics and the Surprise Test Paradox}

\author{Meghdad Ghari}
\institute{School of Mathematics,
Institute for Research in Fundamental Sciences (IPM), \\ P.O.Box: 19395-5746, Tehran, Iran \\ \email{ghari@ipm.ir}
}

\maketitle
\begin{abstract}
In this note we study the effect of adding fixed points to justification logics. We introduce two extensions of justification logics: extensions by fixed point (or diagonal) operators, and extensions by least fixed points. The former is a justification version of Smory\`{n}ski's Diagonalization Operator Logic, and the latter is a justification version of Kozen's modal $\mu$-calculus. We also introduce fixed point extensions of Fitting's quantified logic of proofs, and formalize the Knower Paradox and the Surprise Test Paradox in these extensions. By interpreting a surprise statement as a statement for which there is no justification, we give a solution to the self-reference version of the  Surprise Test Paradox in quantified logic of proofs. We also give formalizations of the Surprise Test Paradox in timed modal epistemic logics, and in G\"{o}del-L\"{o}b provability logic.\\

{\bf Keywords}: Justification logic, Fixed point, $\mu$-calculus, Provability logic, Timed modal epistemic logic, Quantified logic of proofs, Surprise Test Paradox,  Knower Paradox
\end{abstract}

\section{Introduction}
Justification logics provide a framework for reasoning about mathematical proofs and epistemic justifications. Justification logics evolved from a logic called \textit{Logic of Proofs} $\LP$, introduced by Sergei Artemov in \cite{A1995,A2001}, which try to give an arithmetic semantics for modal logic
\s~and intuitionistic logic, and formalize the Brouwer-Heyting-Kolmogrov semantics of intuitionistic logic. Justification logics are extensions of classical logics by justification assertions $t:F$, which is read as ``$t$ is a justification for $F$." Some of the justification logics enjoy the arithmetical completeness theorem, with the provability semantics of $t:F$ as ``$t$ is a proof of $F$ in Peano arithmetic $\PA$."

Justification logics could be also considered as logics of knowledge, and are contributed to the study of \textit{Justified True Belief} vs. \textit{Knowledge} problem. In this respect \LP~can be also viewed as a refinement of
the epistemic logic \s, in which knowability operator $\Box A$
($A$ is known) is replaced by explicit knowledge operators $t:A$
(``$F$ is known for reason $t$"). The exact correspondence between
\LP~and \s~is given by the \textit{Realization Theorem}: all
occurrences of $\b$ in a theorem of \s~can be replaced by suitable
terms to produce a theorem of \LP, and vice versa. Regarding this
theorem, \LP~is called the justification counterpart
of \s. The justification counterpart of other modal logics were also developed
(see e.g. \cite{Brezhnev2000,Ghari2012-Thesis,GoetschiKuznets2012}).

Since some of the justification logics enjoy the arithmetical completeness theorem, it is natural to ask if the ability of constructing self-reference statements in $\PA$, by means of the G\"{o}del's Fixed Point (or Diagonal) Lemma, can be simulated in justification logics.  In the context of language, a statement is self-reference if it refers to itself or its referent. The Fixed Point Lemma in $\PA$ enables us to construct sentences which behaves like the self-reference sentences. Such a self-reference statements have been used to show important results in $\PA$ and arise significant philosophical issues, e.g. G\"{o}del's incompleteness theorems (by constructing a sentence which state its own unprovability), Tarski's undefinability of truth (by constructing a sentence which state its own falsity, the Liar Paradox), Kaplan-Montague's Knower Paradox (by constructing a sentence which state its own unknowability).

In the framework of modal logics, the G\"{o}del-L\"{o}b provability logic $\GL$ is one of the well-known modal logics which is arithmetically complete. The fixed point lemma is formulated in $\GL$ by the De Jongh-Sambin Fixed Point Theorem. Most of the proofs of the De Jongh-Sambin Fixed Point Theorem employs the property of Substitution of Equivalents
\[\frac{A \leftrightarrow B}{C[A] \leftrightarrow C[B]}SE\]
for a context $C[~]$. The proof of SE in modal logics requires the Regularity rule
\[\frac{A \leftrightarrow B}{\b A \leftrightarrow \b B}Reg\]
Obviously the justification version of the Regularity rule does not hold in justification logics
\[\frac{A \leftrightarrow B}{t:A \leftrightarrow  t:B}JReg\]
In other words, two equivalent statements have not necessarily the same justifications (see \cite{Fitting2009b} for a version of SE in the logic of proofs). Thus, instead of proving a fixed point theorem in the framework of justification logics, we extend the language and axioms of justification logics by fixed point formulas and fixed point axioms respectively. We consider two extensions of justification logics: extensions by fixed point (or diagonal) operators, and extensions by least  (and greatest) fixed points. The former is a justification version of Smory\`{n}ski's Diagonalization Operator Logic \cite{Smorynski1985}, and the latter is a justification version of Kozen's modal $\mu$-calculus \cite{Kozen1983}. In this paper, we do not introduce any semantics for these extensions. However, the consistency of  some of these extensions are shown by translating them into their counterpart modal logics.

We also introduce fixed point extensions of Fitting's quantified logic of proofs \cite{Fitting2008}, and formalize the Knower Paradox and the Surprise Test Paradox in these extensions. By interpreting a surprise statement as a statement for which there is no justification, we give a solution to the one-day case self-reference version of the  Surprise Test Paradox in quantified logic of proofs. To this end, we give a simple semantics (single-world Kripke models) for quantified logic of proofs. We also show that the one-day case non-self-reference version of the  paradox is an epistemic blindspot for students (cf. \cite{Sorensen1984}).

\section{Fixed points in arithmetic}\label{sec:Fixed points in arithmetic}
In this section we recall some well known consequences of the \textit{Fixed Point Lemma} (or \textit{Diagonal Lemma}) in extensions of Peano Arithmetic $\PA$.\footnote{All the results also hold for extensions of Robinson arithmetic ${\sf Q}$.} In this paper we do not distinguish between the number $n$ and its numeral $\bar{n}$. The G\"{o}del number of formula $A$ is denoted by $\ulcorner A\urcorner$. The following (generalized) Fixed Point Lemma is taken from \cite{Boolos1993}.

\begin{lemma}[Fixed Point Lemma]\label{lem:Fixed Point Lemma PA}
Let $T$ be a theory extending $\PA$. For every
formula $\varphi(x,y_1,\ldots,y_n)$ there exists a formula
$D(y_1,\ldots,y_n)$ such that
\[T\vdash D(y_1,\ldots,y_n) \leftrightarrow \varphi(\ulcorner D(y_1,\ldots,y_n)\urcorner,y_1,\ldots,y_n).\]
\end{lemma}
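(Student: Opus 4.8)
The plan is to obtain $D$ by the classical diagonalization (self-application) trick, whose only nontrivial input is the arithmetization of syntax. First I would invoke the fact that the syntactic substitution operation is primitive recursive, and hence representable in $\PA$: there is a primitive recursive function $\mathrm{sub}$ such that whenever $m$ is the G\"odel number of a formula $\psi$ with a distinguished free variable $x$, then $\mathrm{sub}(m,k) = \ulcorner \psi(\bar k) \urcorner$ is the code of the formula obtained by replacing $x$ with the numeral $\bar k$. Since every primitive recursive function is representable in $\PA$, I would fix a formula $\mathrm{Sub}(x,y,z)$ representing $\mathrm{sub}$, so that for all $m,k$,
\[
T \vdash \forall z\,\bigl(\mathrm{Sub}(\bar m, \bar k, z) \leftrightarrow z = \overline{\mathrm{sub}(m,k)}\bigr).
\]

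Next I would build the fixed point by self-reference. Treating $x$ as the distinguished variable and keeping $y_1,\ldots,y_n$ as parameters, define the auxiliary formula
\[
\theta(x,y_1,\ldots,y_n) := \forall z\,\bigl(\mathrm{Sub}(x,x,z) \to \varphi(z,y_1,\ldots,y_n)\bigr),
\]
and let $m := \ulcorner \theta(x,y_1,\ldots,y_n)\urcorner$. The desired formula is then $D(y_1,\ldots,y_n) := \theta(\bar m, y_1,\ldots,y_n)$, obtained by substituting the numeral $\bar m$ for the distinguished variable in $\theta$. The crucial bookkeeping step is the observation that, by the very definition of $\mathrm{sub}$, the G\"odel number of $D$ equals $\mathrm{sub}(m,m)$; writing $d := \mathrm{sub}(m,m) = \ulcorner D(y_1,\ldots,y_n)\urcorner$, the representability clause above instantiates to $T \vdash \forall z\,(\mathrm{Sub}(\bar m, \bar m, z) \leftrightarrow z = \bar d)$.

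Finally I would close the loop inside $T$. Using this last equivalence to eliminate the $\mathrm{Sub}$ predicate from $D(y_1,\ldots,y_n) = \forall z\,(\mathrm{Sub}(\bar m,\bar m,z) \to \varphi(z,y_1,\ldots,y_n))$, one obtains
\[
T \vdash D(y_1,\ldots,y_n) \leftrightarrow \forall z\,\bigl(z = \bar d \to \varphi(z,y_1,\ldots,y_n)\bigr) \leftrightarrow \varphi(\bar d, y_1,\ldots,y_n),
\]
and since $\bar d = \ulcorner D(y_1,\ldots,y_n)\urcorner$ this is exactly the claimed fixed point equivalence.

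I expect the only genuine obstacle to be the arithmetization underlying the representability of $\mathrm{sub}$ --- the G\"odel coding of formulas, the verification that substitution of a numeral is primitive recursive, and that $\PA$ represents it --- which is the standard machinery I would cite (it is available in the source the lemma is drawn from) rather than redevelop. The remaining care is purely in the treatment of the parameters $y_1,\ldots,y_n$: I must ensure that diagonalization acts only on the distinguished variable $x$, that the $y_i$ are passed through $\mathrm{sub}$ untouched, and that they remain free and uncaptured in $\theta$, so that the parametrized equivalence holds uniformly in all of the parameters.
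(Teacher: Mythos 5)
Your proof is correct and is essentially the canonical argument: the paper itself states this lemma without proof, citing \cite{Boolos1993}, and your diagonalization via a represented substitution function $\mathrm{sub}$, with the auxiliary formula $\theta(x,y_1,\ldots,y_n)$ and self-application at $m=\ulcorner\theta\urcorner$, is exactly the standard proof found in that source (which uses an existential rather than universal matrix, an inessential variant). Your care with the parameters $y_1,\ldots,y_n$ --- diagonalizing only on $x$ and passing the $y_i$ through $\mathrm{sub}$ untouched --- is precisely what the generalized, parametrized form of the statement requires, so nothing is missing.
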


This lemma enables us to formalize self-references sentences in $\PA$. G\"{o}del uses this lemma\footnote{As pointed out by Mendelson \cite{Mendelson1997}, the Fixed Point Lemma was implicit in the paper of G\"{o}del \cite{Godel1931}, and it seems first explicitly mentioned by Carnap \cite{Carnap1934}.} to construct a sentence in $\PA$ which states ``I am not provable in $\PA$."

\begin{theorem}[G\"{o}del's Incompleteness Theorem]\label{thm:Incompleteness Theorem}
Let $T$ be a recursively axiomatized complete theory extending $\PA$. Then $T$ is inconsistent.
\end{theorem}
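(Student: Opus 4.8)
The plan is to argue by contradiction: assuming $T$ is consistent, I will use the Fixed Point Lemma to manufacture a sentence asserting its own unprovability in $T$, and show that completeness forces $T$ to both prove and refute it. The engine is the Fixed Point Lemma (Lemma~\ref{lem:Fixed Point Lemma PA}) applied to a formula expressing non-provability in $T$.

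First I would observe that a complete, consistent, recursively axiomatized theory is decidable. Since $T$ is recursively axiomatized, its theorems can be effectively enumerated; by completeness every sentence $A$ satisfies $T\vdash A$ or $T\vdash\neg A$, and by the assumed consistency not both, so searching the enumeration until $A$ or $\neg A$ appears terminates and decides theoremhood. Hence the relation ``$x$ codes a theorem of $T$'' is recursive. Because $T$ extends $\PA$, every recursive relation is strongly representable in $T$, so I obtain a formula $\mathrm{Prov}_T(x)$ satisfying $T\vdash A \Rightarrow T\vdash \mathrm{Prov}_T(\ulcorner A\urcorner)$ and $T\nvdash A \Rightarrow T\vdash \neg\mathrm{Prov}_T(\ulcorner A\urcorner)$. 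I then apply Lemma~\ref{lem:Fixed Point Lemma PA} to $\varphi(x):=\neg\mathrm{Prov}_T(x)$ (with no extra parameters), obtaining a sentence $D$ with $T\vdash D \leftrightarrow \neg\mathrm{Prov}_T(\ulcorner D\urcorner)$, which informally asserts its own unprovability in $T$. Splitting on whether $T\vdash D$: if $T\vdash D$, strong representability gives $T\vdash \mathrm{Prov}_T(\ulcorner D\urcorner)$ while the fixed-point equivalence gives $T\vdash \neg\mathrm{Prov}_T(\ulcorner D\urcorner)$, so $T$ is inconsistent, contradicting the assumption; if $T\nvdash D$, strong representability gives $T\vdash \neg\mathrm{Prov}_T(\ulcorner D\urcorner)$, whence the fixed-point equivalence yields $T\vdash D$, again a contradiction. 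Since both alternatives fail, the consistency assumption is untenable and $T$ is inconsistent.

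The step carrying the real content is the passage from the hypotheses to a \emph{strongly} representable provability predicate, that is, the decidability argument together with the invocation of strong representability of recursive relations in extensions of $\PA$. This is precisely what completeness buys: with consistency alone one would have only the one-sided condition $T\vdash A \Rightarrow T\vdash \mathrm{Prov}_T(\ulcorner A\urcorner)$ and would need $\omega$-consistency, or a Rosser-style modification of $D$, to close the $T\nvdash D$ case. Once the two-sided representation is in hand, the fixed-point construction closes both cases symmetrically and the diagonal argument goes through with no appeal to soundness.
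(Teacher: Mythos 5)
Your proof is correct, and its diagonal core is the same as the paper's: apply the Fixed Point Lemma to the negation of a provability predicate, then derive a contradiction from each of the two possible cases. The genuine difference is in how the key predicate and its properties are obtained. The paper takes G\"{o}del's $Proof(x,y)$, sets $Prov(x)=(\exists y)Proof(y,x)$, and asserts as ``easy to show'' the biconditional $T\vdash\varphi$ iff $T\vdash Prov(\gn{\varphi})$, then splits on $T\vdash\mathcal{G}$ versus $T\vdash\neg\mathcal{G}$ via completeness. You instead assume consistency for reductio, observe that a consistent, complete, recursively axiomatized theory is decidable, and strongly represent the resulting recursive theoremhood relation, getting the two implications $T\vdash A\Rightarrow T\vdash \mathrm{Prov}_T(\gn{A})$ and $T\nvdash A\Rightarrow T\vdash\neg \mathrm{Prov}_T(\gn{A})$, with the case split done at the meta-level on whether $T\vdash D$. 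What your route buys is rigor exactly where the paper is thinnest: for the existential predicate $(\exists y)Proof(y,x)$, the right-to-left half of the paper's biconditional is not straightforward, since $T\nvdash\varphi$ only yields $T\vdash\neg Proof(n,\gn{\varphi})$ for each standard $n$, which does not give $T\vdash\neg(\exists y)Proof(y,\gn{\varphi})$; closing that gap requires an $\omega$-consistency-style hypothesis, a Rosser modification, or precisely your decidability-plus-strong-representability detour. What the paper's formulation buys is brevity and a predicate that visibly satisfies the T-scheme, which the surrounding discussion exploits; your closing remark correctly identifies the two-sided representation as the step where completeness does its real work.
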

\begin{proof}
G\"{o}del constructed a proof predicate $Proof(x,y)$ in $T$ such that for every formula $\varphi$:
\begin{equation}\label{eq:Proof}
T\vdash \varphi ~\textrm{if{f}}~ T\vdash Proof(n,\gn{\varphi}), \textrm{for some}~n\in\mathbb{N}.
\end{equation}
Let $Prov(x)$ stand for $(\exists y) Proof(y,x)$. It is easy to show that
\begin{equation}\label{eq:Prov}
T\vdash\varphi ~\textrm{if{f}}~ T\vdash Prov(\gn{\varphi}).
\end{equation}
Now by the Fixed Point Lemma there is a sentence $\mathcal{G}$ (G\"{o}del's sentence for T) such that
\begin{equation}\label{eq:Godel sentence}
T\vdash \mathcal{G} \leftrightarrow \neg Prov(\gn{\mathcal{G}}).
\end{equation}
Since $T$ is assumed to be complete, then either $T\vdash \mathcal{G}$ or $T\vdash \neg\mathcal{G}$. We will show that in either case $T$ is inconsistent.

If $T\vdash \mathcal{G}$, then by (\ref{eq:Prov}) we have $T\vdash Prov(\gn{\mathcal{G}})$. Next by (\ref{eq:Godel sentence}) we obtain $T\vdash \neg \mathcal{G}$. Thus, $T$ proves both $\mathcal{G}$ and $\neg\mathcal{G}$.

If $T\vdash \neg\mathcal{G}$, then by (\ref{eq:Godel sentence}) we have $T\vdash Prov(\gn{\mathcal{G}})$. Next by (\ref{eq:Prov}) we obtain $T\vdash \mathcal{G}$. Thus, $T$ proves both $\mathcal{G}$ and $\neg\mathcal{G}$.\qed
\end{proof}

\begin{theorem}[Tarski's Undefinability of Truth]\label{thm:Tarski's Undefinability of Truth}
Let $T$ be a theory extending $\PA$, and $Tr(x)$
be a truth predicate, i.e a predicate with one free variable $x$ such that for
every sentence $A$
\begin{description}
    \item[Tr.] $T\vdash A \leftrightarrow Tr(\ulcorner A\urcorner)$.
\end{description}
Then T is inconsistent.
\end{theorem}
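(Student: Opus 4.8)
The plan is to prove Tarski's theorem as a direct diagonalization, mirroring the structure of the proof of Gödel's Incompleteness Theorem (Theorem~\ref{thm:Incompleteness Theorem}) but using the truth predicate $Tr(x)$ in place of the provability predicate. The idea is to construct a ``Liar sentence'' that asserts its own falsity, and then derive a contradiction inside $T$ directly, without any appeal to completeness of $T$.

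First I would apply the Fixed Point Lemma (Lemma~\ref{lem:Fixed Point Lemma PA}) to the formula $\varphi(x) := \neg Tr(x)$. This yields a sentence $\mathcal{L}$ (the Liar sentence) such that
\[
T\vdash \mathcal{L} \leftrightarrow \neg Tr(\gn{\mathcal{L}}).
\]
Next I would instantiate the defining property of the truth predicate, axiom schema {\bf Tr}, at the particular sentence $A := \mathcal{L}$, giving
\[
T\vdash \mathcal{L} \leftrightarrow Tr(\gn{\mathcal{L}}).
\]
Combining these two equivalences by propositional reasoning inside $T$ produces
\[
T\vdash Tr(\gn{\mathcal{L}}) \leftrightarrow \neg Tr(\gn{\mathcal{L}}),
\]
which is a contradiction; hence $T$ proves both $Tr(\gn{\mathcal{L}})$ and its negation, and so $T$ is inconsistent.

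The key contrast with the Gödel proof is that here no case split on completeness is needed: the truth predicate satisfies a biconditional schema in \emph{both} directions (unlike $Prov$, whose defining property~\eqref{eq:Prov} is only a metatheoretic equivalence about what $T$ proves, not an internal biconditional), so the contradiction falls out immediately once the fixed point is formed. I expect that there is no genuine obstacle in the argument; the only subtle point to check is that the Fixed Point Lemma applies to $\neg Tr(x)$, which it does since this is a formula with a single free variable $x$ (the case $n=0$ of Lemma~\ref{lem:Fixed Point Lemma PA}), and that the substitution of $\mathcal{L}$ into the schema {\bf Tr} is legitimate because $\mathcal{L}$ is a sentence. Thus the whole proof reduces to one application of the Fixed Point Lemma followed by a short propositional derivation.
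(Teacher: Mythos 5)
Your proof is correct and is essentially identical to the paper's own argument: both apply the Fixed Point Lemma to $\neg Tr(x)$ to obtain a Liar sentence $D$ with $T\vdash D \leftrightarrow \neg Tr(\gn{D})$, instantiate the schema \textbf{Tr} at $D$, and combine the two biconditionals propositionally to get $T\vdash Tr(\gn{D}) \leftrightarrow \neg Tr(\gn{D})$. Your added remark that no completeness assumption or case split is needed (in contrast to the G\"{o}del argument) is accurate but does not change the substance of the proof.
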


\begin{proof}
\begin{enumerate}
    \item $ D \leftrightarrow \neg Tr(\gn{D})$, by the Fixed Point Lemma
    \item $ D \leftrightarrow  Tr(\gn{D})$, by  Tr
    \item $ Tr(\gn{D}) \leftrightarrow\neg  Tr(\gn{D})$,
    contradiction.\qed
\end{enumerate}
\end{proof}
In fact, the sentence $D$ in $ D \leftrightarrow \neg Tr(\gn{D})$  correspond to the Liar Sentence:
\begin{center}
``This statement is false"
\end{center}
and the argument given in the proof of Tarski's theorem, which expresses that the Liar Sentence is true if and only if it is not, is known as the \textit{Liar Paradox}. The scheme Tr is called the \textit{Tarski biconditional} or \textit{T-scheme}. Note that, by (\ref{eq:Prov}), the provability predicate $Prov(x)$ satisfies the T-scheme.

In the following we consider the \textit{Surprise Test Paradox}. This paradox first published by O'Connor \cite{Connor} with the name ``Class A blackout." In the following we give the more common formulation of the paradox, the Surprise Test (or Examination) Paradox, which is given by Weiss \cite{Weiss} (under a different name ``the prediction paradox"). For a survey of the paradox see \cite{Chow,Sorensen2014}.

The two-day case of this paradox is as follows:
\begin{quote}
``A teacher announces that there will be exactly one surprise test on Wednesday or Friday next week. A student objects that this is impossible. If the test is given on Friday, then on Thursday I would be able to predict that the test is on Friday. It would not be a surprise. The test could not be given on Wednesday too. Because on Tuesday I would know that the test will not be on Friday (as shown in the previous reasoning) and therefore I could foresee that the test will be on Wednesday. Again a test on Wednesday would not be a surprise. Therefore, it is impossible for there to be a surprise test."
\end{quote}
The one-day case of the paradox is as follows:
\begin{quote}
``You will have a test tomorrow that will take you by surprise, i.e. you can't know it beforehand"
\end{quote}
As it is clear from the above formulations of the paradox, ``surprise test" is defined in terms of what can be known. Specifically, a test is a surprise for a student if and only if the student cannot know beforehand which day the test will occur. The precise formulation of (non-self-reference) $n$-day case of the paradox requires temporalized knowledge operators in the language. A formalization of the student's argument in the two-day case in timed modal epistemic logics is given in Appendix \ref{Appendix:Surprise test paradox in tMEL}.

Some authors interpret the surprise (or knowledge) in this paradox in terms of deducibility. Specifically, a test is a surprise for a student if and only if the student cannot deduce logically beforehand the date of the test. This interpretation was first proposed by Shaw \cite{Shaw}. Regarding deducibility in \PA, Fitch \cite{Fitch1964} resolved the paradox by reinterpreting the surprise so that ``what is intended in practice is not that the surprise event will be a surprise \textit{whenever} it occurs, but only when it occurs on some day \textit{other than the last}". He formulated the announcement with this interpretation and claim that it is ``apparently self-consistent" in \PA. Nevertheless, Kripke \cite{Kripke2011} shows that the statement of Fitch's resolution is actually refutable in \PA. Fitch also show the relation between a version of the Surprise Test Paradox and G\"{o}del's first incompleteness theorem. Kritchman and Raz \cite{KritchmanRaz} also show the relationship between the paradox and G\"{o}del's second incompleteness theorem. They conclude that ``if the students believe in the consistency of $T+S$, the exam cannot be held on Friday [i.e. the last day], ... However, the exam can be held on any other day of the week because the students cannot prove the consistency of $T+S$" (where $T$ can be taken $\PA$, and $S$ is the statement of teacher's announcement). The formalization of these results in G\"{o}del-L\"{o}b provability logic are presented in Appendix \ref{Appendix:Surprise test paradox in GL}.

The self-reference version of the paradox (adopted from \cite{Egre2005,KaplanMontague1960}) is as follows:
\begin{quote}
 ``Unless you know this statement to be false, you will have a test tomorrow, but you can't know from this statement that you will have a test tomorrow."
\end{quote}
The above version of the paradox is called the \textit{Examiner Paradox} in \cite{Egre2005}. \`{E}gr\`{e} defined a \textit{knowledge predicate} as a predicate satisfying the principle of knowledge veracity: $K(\gn{A})\r A$, for every sentence $A$. Now, using a knowledge predicate, the Examiner Paradox is formalized as follows:
\begin{equation}\label{eq:Self-reference Examiner Paradox}
D \leftrightarrow ( K(\gn{\neg  D})\vee( E\wedge \neg K(\gn{D\r E}))),
\end{equation}
  where $ E$ denotes the sentence ``you will have a test tomorrow." Using (\ref{eq:Self-reference Examiner Paradox}), \`{E}gr\`{e} proved the following.

  \begin{theorem}[\cite{Egre2005}]
Let $T$ be a theory extending $\PA$, with $I(x,y)$ a formula expressing derivability between formulas of $T$, and $K(x)$ a unary predicate such that for every sentence $A$
\begin{description}
  \item[T.] $K(\gn{A})\r A$,
  \item[U.] $K(\ulcorner K(\gn{A})\r A\urcorner)$,
  \item[I.] $K(\gn{A})\wedge I(\gn{A},\gn{B})\r K(\gn{B})$,
  \item[R.] $K(\ulcorner T\wedge U\wedge I\urcorner)$.
\end{description}
Then $T$ is inconsistent.
\end{theorem}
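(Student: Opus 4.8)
The plan is to apply the Fixed Point Lemma to manufacture the diagonal sentence $D$ and then run the Examiner argument twice: once \emph{externally}, deriving ordinary theorems of $T$, and once \emph{internally}, converting those theorems into knowledge claims through the closure principle~\textbf{I}. First I would invoke Lemma~\ref{lem:Fixed Point Lemma PA} to obtain a sentence $D$ with
\[
T \vdash D \leftrightarrow \bigl(K(\gn{\neg D}) \vee (E \wedge \neg K(\gn{D \to E}))\bigr), \qquad (\ast)
\]
which is precisely the formalization~(\ref{eq:Self-reference Examiner Paradox}) of the paradox. Throughout I write $\Psi$ for the conjunction $T\wedge U\wedge I$ of the knowledge principles, so that \textbf{R} reads $K(\gn{\Psi})$.

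The first external step is to eliminate the left disjunct. Arguing inside $T$: if $K(\gn{\neg D})$ held, then veracity \textbf{T} (instance $A:=\neg D$) would give $\neg D$, while $(\ast)$ would give $D$ from that very disjunct; hence $T\vdash \neg K(\gn{\neg D})$. Feeding this back into $(\ast)$ collapses it to $T\vdash D \leftrightarrow (E \wedge \neg K(\gn{D\to E}))$, from which $T\vdash D\to E$ is immediate.

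The crucial move is to internalize $D\to E$. Since $T\vdash D\to E$, weakening gives $T\vdash \Psi \to (D\to E)$, so the derivability relation holds for this pair and, $I$ being a faithful ($\Sigma_1$) representation of $T$-derivability, $T\vdash I(\gn{\Psi},\gn{D\to E})$; the closure principle \textbf{I} together with $K(\gn{\Psi})$ from \textbf{R} then yields $T\vdash K(\gn{D\to E})$. Plugging $K(\gn{D\to E})$ back into the collapsed equivalence falsifies its right disjunct, so $T\vdash \neg D$. Repeating the same internalization on $\neg D$ (again via $K(\gn{\Psi})$, representability of $I$, and closure \textbf{I}) gives $T\vdash K(\gn{\neg D})$, directly contradicting the first step $T\vdash \neg K(\gn{\neg D})$. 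Hence $T$ is inconsistent.

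The main obstacle is the internalization step: passing from a meta-level theorem $T\vdash\varphi$ to the object-level assertion $K(\gn{\varphi})$. This is legitimate only because \textbf{R} packages all the principles into a single known sentence $\Psi$, and $I$ faithfully represents derivability, so that every theorem of $T$ is derivable from $\Psi$ and therefore, by closure \textbf{I}, known. If one prefers to avoid this blanket ``necessitation,'' the same effect can be obtained step by step, with \textbf{U} supplying exactly the knowledge of the veracity instances $K(\gn{K(\gn{A})\to A})$ used in the first step, and \textbf{I} propagating knowledge through the logical and fixed-point manipulations; checking that closure indeed transmits knowledge across each such step, in particular across conjunctions of premises, is the delicate bookkeeping on which the argument turns.
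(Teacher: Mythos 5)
The paper never proves this theorem: it is quoted from \`{E}gr\`{e}'s paper, and what the paper proves instead is the variant (Theorem \ref{thm:Examiner paradox in PA}) in which U, I, R are replaced by the rule Nec. Your external skeleton --- eliminate the left disjunct by veracity, collapse the fixed point from Lemma \ref{lem:Fixed Point Lemma PA} to $D \leftrightarrow (E\wedge\neg K(\gn{D\r E}))$, extract $D\r E$, internalize to get $\neg D$, internalize again to get $K(\gn{\neg D})$, contradiction --- coincides step for step with the paper's proof of that Nec-variant. What you add is a derivation of the two necessitation steps from the hypotheses I and R: if $T\vd \varphi$ then $\varphi$ is derivable over $T$ from $\Psi$, so (by representability of the derivability relation) $T\vd I(\gn{\Psi},\gn{\varphi})$, and closure I together with $K(\gn{\Psi})$ gives $T\vd K(\gn{\varphi})$. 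Under the reading that ``$I(x,y)$ expresses derivability between formulas of $T$'' means $\Sigma_1$-representability of derivability-over-$T$ (actual derivability implies $T\vd I(\gn{A},\gn{B})$), this is a correct reduction of \`{E}gr\`{e}'s hypotheses to the Nec rule, and the rest of your argument is sound.

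Two caveats, both turning on that reading of $I$. First, under it your proof never uses U at all, and R could be weakened to knowledge of \emph{any} single sentence; the fact that \`{E}gr\`{e} lists U and R as separate hypotheses strongly suggests his $I$ expresses derivability in first-order logic, not over $T$. Under that weaker reading your pivotal step --- ``weakening gives $T\vd \Psi\r(D\r E)$, so the derivability relation holds for this pair'' --- is invalid: $T$-provability of an implication is not logical derivability of the consequent from the antecedent, precisely because the fixed-point equivalence itself needs $T$'s arithmetic and is not a logical consequence of $\Psi$. The repair is the step-by-step version you only sketch in your final paragraph: the first internalization must use U (knowledge of the veracity instance $K(\gn{\neg D})\r\neg D$) together with closure, and the second must use R (knowledge of the conjunction of the instances of T, U, I actually used), which is exactly why both hypotheses appear in the statement. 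Second, even that repaired version has to account for knowledge of (or derivability-over-$T$ treatment of) the fixed-point equivalence; your proposal handles this only by the strong reading of $I$. So: correct as a proof of the statement as phrased in this paper, but the bookkeeping you defer at the end is not optional decoration --- it is where the distinct roles of U and R live, and on the alternative reading of $I$ it is the entire proof.
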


Assumptions U, I, and  R could be replaced with a stronger assumption, a rule similar to the modal necessitation rule, as follows.

  \begin{theorem}\label{thm:Examiner paradox in PA}
Let $T$ be a theory extending $\PA$, and $K(x)$ a unary predicate such that for
every sentence $A$
\begin{description}
  \item[T.] $K(\gn{A})\r A$,
  \item[Nec.] If $T\vd A$, then $T\vd K(\gn{A})$,
\end{description}
Then $T$ is inconsistent.
\end{theorem}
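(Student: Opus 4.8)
The plan is to imitate the proofs of Theorems~\ref{thm:Incompleteness Theorem} and~\ref{thm:Tarski's Undefinability of Truth}: construct a self-referential sentence by the Fixed Point Lemma and then play the factivity axiom T against the rule Nec. First I would apply Lemma~\ref{lem:Fixed Point Lemma PA} to the formula $\neg K(x)$, obtaining a ``knower sentence'' $D$ that asserts its own unknowability,
\[T\vdash D \leftrightarrow \neg K(\gn{D}).\]

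Next I would show, at the object level, that $D$ is provable. Instantiating T at $A:=D$ gives $T\vdash K(\gn{D})\r D$, while the left-to-right direction of the fixed point gives $T\vdash D\r\neg K(\gn{D})$. Chaining these yields $T\vdash K(\gn{D})\r\neg K(\gn{D})$, whence $T\vdash\neg K(\gn{D})$ by propositional logic; the right-to-left direction of the fixed point then delivers $T\vdash D$.

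The decisive move is to invoke Nec, which is where this argument departs from the purely schematic proofs above. Since $D$ has been certified as a theorem of $T$, Nec applies and produces $T\vdash K(\gn{D})$. Together with the already-derived $T\vdash\neg K(\gn{D})$, this shows that $T$ proves both a sentence and its negation, so $T$ is inconsistent.

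The one point demanding care is the ordering of the two ingredients: factivity T is an \emph{axiom}, usable freely inside any derivation, whereas Nec is a \emph{rule} that may be fired only after $D$ has been established as a theorem. Thus one must first derive $T\vdash\neg K(\gn{D})$, read it back through the fixed point as $T\vdash D$, and only then apply Nec; firing Nec prematurely, or attempting to internalize it as an object-level implication $D\r K(\gn{D})$, would break the argument. I expect this sequencing to be the sole obstacle; the remaining steps are routine propositional manipulation of the two biconditionals.
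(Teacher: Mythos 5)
Your proof is correct, but it is not the argument the paper gives for this theorem. You apply the Fixed Point Lemma to $\neg K(x)$ and derive the contradiction from the Knower sentence $D \leftrightarrow \neg K(\gn{D})$; this is, almost word for word, the paper's own proof of Montague's theorem (Theorem~\ref{thm:Knower, simple case}), which has exactly the same hypotheses T and Nec. The paper's proof of the present theorem instead applies the Fixed Point Lemma to the Examiner formula, producing a sentence $D$ with $T\vd D \leftrightarrow (K(\gn{\neg D})\vee(E\wedge \neg K(\gn{D\r E})))$ for a fixed sentence $E$, and uses Nec twice: first on $T\vd D\r E$ to obtain $K(\gn{D\r E})$ and hence $\neg D$, then on $T\vd\neg D$ to obtain $K(\gn{\neg D})$ and hence $D$. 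What the paper's choice buys is the intended interpretation: the theorem is positioned as a formalization of the Examiner Paradox (the self-reference one-day Surprise Test announcement), and its proof shows that \`{E}gr\`{e}'s assumptions U, I, R can be replaced by the rule Nec while still refuting that specific announcement. What your choice buys is economy: a single application of Nec and a shorter propositional derivation suffice for the bare inconsistency claim, and it makes explicit that this theorem is subsumed by Theorem~\ref{thm:Knower, simple case}. Your closing remark about sequencing --- that one must first establish $T\vd D$ and only then fire the rule Nec, which cannot be internalized as an object-level implication $D\r K(\gn{D})$ --- is exactly right, and it applies equally to both arguments.
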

\begin{proof}
\begin{enumerate}
    \item $  D \leftrightarrow (K(\gn{\neg D})\vee( E\wedge \neg K(\gn{ D\r E})))$, by the Fixed Point Lemma
    \item $ K(\gn{\neg D})\r \neg D$, by T
    \item $ D\r\neg K(\gn{\neg D})$, by 2 and propositional reasoning
    \item $  D \rightarrow ( E\wedge \neg K(\gn{ D\r E}))$, by 1, 3 and propositional reasoning
    \item $ D\r E$, by 4 and propositional reasoning
    \item  $ D\r\neg K(\gn{ D\r E})$, by 4 and propositional reasoning
    \item $K(\gn{ D\r E})$, by 5 and Nec
    \item $\neg D$, by 6, 7 and propositional reasoning
    \item $ K(\gn{\neg D})$, by 8 and Nec
    \item $ D$, by 1, 9 and propositional reasoning
    \item $\bot$, by 8 and 10.\qed
\end{enumerate}
\end{proof}

Now we consider the zero-day case Surprise Test Paradox, which is known as the \textit{Knower Paradox}. It is formulated as $D \leftrightarrow  K(\gn{\neg  D})$ that states:
\begin{center}
``This statement is known to be false"
\end{center}
or as $D \leftrightarrow  \neg K(\gn{D})$ that states:
\begin{center}
``Nobody knows this statement to be true"
\end{center}
The original formulation of the Knower Paradox presented by Kaplan and Montague in \cite{KaplanMontague1960}, and basically is the epistemological counterpart of the Liar Paradox.

\begin{theorem} [The Knower Paradox, \cite{KaplanMontague1960}]
Let $T$ be a theory extending $\PA$, with $I(x,y)$ a formula expressing derivability between formulas of $T$, and $K(x)$ a unary predicate such that for every sentence $A$ and $B$
\begin{description}
  \item[T.] $K(\gn{A})\r A$,
  \item[U.] $K(\gn{K(\gn{A})\r A})$,
  \item[I.] $K(\gn{A})\wedge I(\gn{A},\gn{B})\r K(\gn{B})$.
\end{description}
Then $T$ is inconsistent.
\end{theorem}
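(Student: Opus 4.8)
The plan is to adapt the argument of Theorem~\ref{thm:Examiner paradox in PA}, but since the necessitation rule Nec is no longer available, I will recover its effect from the combination of the internalized veracity principle U and the closure rule I. I would take the zero-day fixed point $D$ given by the Fixed Point Lemma satisfying
\[T\vd D \leftrightarrow K(\gn{\neg D}),\]
which formalizes the self-referential sentence ``this statement is known to be false.''

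First I would run the purely propositional half of the Liar-style argument to establish that $T$ proves $\neg D$. Instantiating T at $A=\neg D$ gives $K(\gn{\neg D})\r\neg D$; combining this with the fixed point equivalence, the assumption $K(\gn{\neg D})$ would yield both $D$ (by the equivalence) and $\neg D$ (by T), so $T\vd\neg K(\gn{\neg D})$ and hence, by the equivalence again, $T\vd \neg D$. The derived fact $T\vd\neg K(\gn{\neg D})$ is then set aside for the final contradiction.

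The crucial second half is to reprove $K(\gn{\neg D})$ from inside $K$. Instantiating U at $A=\neg D$ gives the internal knowledge $K(\gn{K(\gn{\neg D})\r\neg D})$. I would then check the derivability fact underlying I, namely $I(\gn{K(\gn{\neg D})\r\neg D},\gn{\neg D})$: working in $T$, from the hypothesis $K(\gn{\neg D})\r\neg D$ together with the fixed point theorem $D\leftrightarrow K(\gn{\neg D})$ one obtains $D\r\neg D$, hence $\neg D$, so $T\vd (K(\gn{\neg D})\r\neg D)\r\neg D$ and the derivability predicate holds. Applying I at $A=K(\gn{\neg D})\r\neg D$ and $B=\neg D$ to the two established facts then yields $K(\gn{\neg D})$, which contradicts $\neg K(\gn{\neg D})$ from the first half, so $T$ is inconsistent.

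The main obstacle I anticipate is the verification of the derivability step $I(\gn{K(\gn{\neg D})\r\neg D},\gn{\neg D})$: this is exactly where the self-reference is exploited, because one must feed the fixed point equivalence itself (a $T$-theorem) into the derivation so that $\neg D$ follows from the single veracity instance. One must also be careful that $I$ is read as derivability over $T$, so that background theorems such as the fixed point equivalence may be used, matching the intended reading of ``a formula expressing derivability between formulas of $T$.'' With Nec available this subtlety disappears, and handling it is precisely the price of replacing Nec by the pair U and I.
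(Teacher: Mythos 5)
Your proof is correct. Note that for this particular theorem the paper offers no proof of its own---it is stated with a citation to \cite{KaplanMontague1960}, and the paper only proves the variants in which U and I are replaced by the rule Nec (Theorems \ref{thm:Examiner paradox in PA} and \ref{thm:Knower, simple case})---so what you have reconstructed is the classical Kaplan--Montague argument that the citation points to. Your decomposition is exactly the standard one: the propositional half gives $T\vd\neg K(\gn{\neg D})$ and hence $T\vd\neg D$, and the second half recovers $T\vd K(\gn{\neg D})$ by applying I to the U-instance $K(\gn{K(\gn{\neg D})\r\neg D})$ together with the derivability fact $I(\gn{K(\gn{\neg D})\r\neg D},\gn{\neg D})$, contradicting the first half. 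You have also correctly isolated the one delicate point: that instance of $I$ is available only if $I$ expresses derivability \emph{over $T$} (so that the fixed-point equivalence, itself a $T$-theorem, may be used inside the derivation), together with the tacit property of an ``expressing'' formula that true derivability facts are $T$-provable (for the natural $\Sigma_1$ formula of a recursively axiomatized $T$ this is $\Sigma_1$-completeness). With pure first-order derivability the scheme I would indeed be too weak, which is precisely why \`{E}gr\`{e}'s Examiner theorem quoted in the paper carries the extra hypothesis R. One small simplification is available: since the first half already yields $T\vd\neg D$ outright, $\neg D$ is derivable in $T$ from \emph{any} premise whatsoever, so the required derivability instance is immediate and needs no second pass through the fixed point---though your direct verification is equally valid and has the merit of not depending on the first half.
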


Similar to the Tarski's Undefinability of Truth (Theorem \ref{thm:Tarski's Undefinability of Truth}), the Knower Paradox can be seen as the \textit{Arithmetic Undefinability of Knowledge}. The following variant of  the Knower Paradox is given  by  Montague in \cite{Montague1963}.

\begin{theorem} [\cite{Montague1963}]\label{thm:Knower, simple case}
Let $T$ be a theory extending $\PA$, and $K(x)$ a unary predicate such that for every sentence $A$
\begin{description}
 \item[T.] $K(\gn{A})\r A$,
  \item[Nec.] If $T\vd A$, then $T\vd K(\gn{A})$.
\end{description}
Then $T$ is inconsistent.
\end{theorem}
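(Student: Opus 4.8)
The plan is to reproduce the structure of the proof of Theorem \ref{thm:Examiner paradox in PA}, but in the simpler \emph{zero-day} setting, where the test statement $E$ and the accompanying disjunction disappear entirely. Following the second formulation given above (``Nobody knows this statement to be true''), I would first invoke the Fixed Point Lemma (Lemma \ref{lem:Fixed Point Lemma PA}) to obtain a sentence $D$ with
\[ T\vd D \leftrightarrow \neg K(\gn{D}). \]
This $D$ plays the role of the Knower sentence, asserting its own unknowability.

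Next I would extract a contradiction purely from the reflection axiom \textbf{T} together with the fixed point. From \textbf{T} we have $K(\gn{D})\r D$, and from the fixed point we have $D\r \neg K(\gn{D})$; taking the contrapositive of the latter gives $K(\gn{D})\r\neg D$. Combining $K(\gn{D})\r D$ with $K(\gn{D})\r \neg D$ by propositional reasoning yields $T\vd \neg K(\gn{D})$, and hence, using the fixed point once more, $T\vd D$. The final move is where the rule \textbf{Nec} comes into play: since $T\vd D$ is now a genuine derivability fact, \textbf{Nec} gives $T\vd K(\gn{D})$, which directly contradicts $T\vd \neg K(\gn{D})$, so $T$ is inconsistent.

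I do not expect any serious obstacle here; the argument is a short propositional shuffle around the single biconditional. The one point requiring care is the interplay between the object-level axiom \textbf{T} and the meta-level rule \textbf{Nec}: the reflection principle $K(\gn{D})\r D$ is used \emph{inside} $T$ to drive $D$ into a theorem, whereas \textbf{Nec} is applied only \emph{after} $T\vd D$ has been established, converting provability back into knowledge. It is precisely this tension --- reflection pushing knowledge down to truth, necessitation pushing provability up to knowledge --- that closes the loop, and one must be careful not to conflate the two. Alternatively, the first formulation $D\leftrightarrow K(\gn{\neg D})$ could be used: from \textbf{T} one gets $D\r\neg D$, hence $T\vd\neg D$, then \textbf{Nec} gives $T\vd K(\gn{\neg D})$ and the fixed point returns $T\vd D$, again contradicting $T\vd\neg D$.
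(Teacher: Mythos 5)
Your proposal is correct and follows essentially the same argument as the paper's own proof: fixed point $D \leftrightarrow \neg K(\gn{D})$, then axiom \textbf{T} plus propositional reasoning yields $T\vd \neg K(\gn{D})$ and hence $T\vd D$, and finally \textbf{Nec} gives $T\vd K(\gn{D})$, a contradiction. The minor reshuffling (deriving $K(\gn{D})\r\neg D$ rather than $K(\gn{D})\r\neg K(\gn{D})$ as an intermediate step) and the alternative formulation you sketch at the end are immaterial variations of the same proof.
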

\begin{proof}
\begin{enumerate}
    \item $ D\leftrightarrow\neg K(\gn{D})$, by the Fixed Point Lemma
    \item $ K(\gn{D})\r D$, by T
    \item $ K(\gn{D})\r \neg K(\gn{D})$, by 1, 2 and propositional reasoning
    \item $ \neg K(\gn{D})$, by 3 and propositional reasoning
    \item $ D$, by 1, 4 and propositional reasoning
    \item $ K(\gn{D})$, by 5 and Nec
    \item $\bot$, by 4, 6 and propositional reasoning.\qed
\end{enumerate}
\end{proof}
Note that Theorem \ref{thm:Knower, simple case} is a generalization of Theorem \ref{thm:Tarski's Undefinability of Truth}, since every truth predicate is a knowledge predicate satisfying also the rule Nec.

Finally, we give the following version of the \textit{Believer Paradox} from \cite{Egre2005}.
 \begin{theorem}\label{thm:Believer, simple case}
Let $T$ be a theory extending $\PA$, and $B(x)$ a unary predicate such that for every sentence $F$ and $G$
\begin{description}
 \item[K.] $B(\gn{F\r G})\r (B(\gn{F}) \r B(\gn{G}))$,
 \item[4.] $B(\gn{F}) \r B(\gn{B(\gn{F})})$,
 \item[D.] $B(\gn{\neg F}) \r \neg B(\gn{F})$,
  \item[Nec.] If $T\vd F$, then $T\vd B(\gn{F})$.
\end{description}
Then $T$ is inconsistent.
\end{theorem}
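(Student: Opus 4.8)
The plan is to recognize that the three assumptions \textbf{K}, \textbf{4}, and \textbf{Nec} are precisely the Hilbert--Bernays--L\"ob derivability conditions, so that $B(x)$ behaves formally like a provability predicate, while the remaining assumption \textbf{D} plays the role of an internal \emph{consistency} statement. My strategy is therefore to reduce the theorem to an abstract form of L\"ob's theorem: first I would show that $T\vd\neg B(\gn{\bot})$, and then I would run the L\"ob construction at $\bot$ to conclude $T\vd\bot$. In contrast with the Knower Paradox (Theorem~\ref{thm:Knower, simple case}), which invokes factivity \textbf{T}, here I expect to use only the weaker principle \textbf{D}, so that the result is really about belief rather than knowledge.

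First I would extract the internal consistency statement. Instantiating \textbf{D} with $F=\bot$ gives $B(\gn{\neg\bot})\r\neg B(\gn{\bot})$; since $T\vd\neg\bot$, rule \textbf{Nec} yields $T\vd B(\gn{\neg\bot})$, and modus ponens then gives $T\vd\neg B(\gn{\bot})$, that is, $T\vd B(\gn{\bot})\r\bot$. This is the only place where \textbf{D} is used.

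Next I would apply the Fixed Point Lemma (Lemma~\ref{lem:Fixed Point Lemma PA}) to obtain a sentence $D$ with $T\vd D\leftrightarrow\neg B(\gn{D})$, i.e. $T\vd D\leftrightarrow(B(\gn{D})\r\bot)$. The L\"ob-style computation then proceeds from $T\vd D\r(B(\gn{D})\r\bot)$ by applying \textbf{Nec} and then \textbf{K} twice to push the belief operator through the nested implication, obtaining $T\vd B(\gn{D})\r(B(\gn{B(\gn{D})})\r B(\gn{\bot}))$, and applying \textbf{4} in the instance $B(\gn{D})\r B(\gn{B(\gn{D})})$ to collapse the iterated operator; combining these gives $T\vd B(\gn{D})\r B(\gn{\bot})$. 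Together with $T\vd\neg B(\gn{\bot})$ this yields $T\vd\neg B(\gn{D})$, whence the fixed-point equivalence gives $T\vd D$, rule \textbf{Nec} gives $T\vd B(\gn{D})$, and we reach the contradiction $T\vd\bot$.

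The main obstacle is the diagonal L\"ob step, where the belief operator must be driven through the fixed-point implication using \textbf{K} and then the iteration principle \textbf{4}; correctly tracking the nested occurrence $B(\gn{B(\gn{D})})$ and applying \textbf{4} at exactly the right subformula is the delicate part of the bookkeeping. Once $T\vd\neg B(\gn{\bot})$ has been secured from \textbf{D}, everything else is routine propositional reasoning together with \textbf{Nec}.
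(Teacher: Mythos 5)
Your proof is correct. The paper states this theorem without giving a proof --- it is imported from \`{E}gr\`{e} \cite{Egre2005} --- and your argument is exactly the canonical one that citation points to: \textbf{D} and \textbf{Nec} instantiated at $F=\bot$ yield the internal consistency statement $T\vd\neg B(\gn{\bot})$, and the formalized second-incompleteness (L\"{o}b-style) computation on the G\"{o}del fixed point $D\leftrightarrow\neg B(\gn{D})$, using \textbf{Nec}, then \textbf{K} twice, then \textbf{4} once, gives $T\vd B(\gn{D})\r B(\gn{\bot})$, whence $T\vd\neg B(\gn{D})$, then $T\vd D$ by the fixed point equivalence, then $T\vd B(\gn{D})$ by \textbf{Nec}, a contradiction. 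All the steps check out, including the delicate bookkeeping in the two applications of \textbf{K} (first with $F=D$, $G=B(\gn{D})\r\bot$, then with $F=B(\gn{D})$, $G=\bot$) and the single use of \textbf{4}; the only cosmetic remark is that what you call ``L\"{o}b at $\bot$'' is literally the G\"{o}del--second-incompleteness argument, since the L\"{o}b fixed point of $B(\gn{p})\r\bot$ coincides with the G\"{o}del sentence.
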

\section{Fixed points in modal logics}\label{sec:Fixed points in modal logics}
In this section, we recall two kind of extensions of modal logics with fixed points: fixed point (or diagonal) extensions of modal logics where first introduced by Smory\`{n}ski in \cite{Smorynski1985}, and modal $\mu$-calculus (modal fixed point logics) where first introduced by Kozen in \cite{Kozen1983}. We first recall definitions of normal modal logics.

Modal formulas are constructed by the following grammar:
\[ A::= p~|~\bot~|~\neg A~|~A\wedge A~|~A\vee A~|~A\rightarrow A~|~\b A,\]
where $p$ is a propositional variable,  $\bot$ is the propositional
constant for falsity. As usual, $\Diamond A$ is defined as $\neg \b \neg A$.\footnote{We adopt the following decreasing order of strength of the connectives: $\neg,\Box,\wedge,\vee,\r$.}

The basic modal logic {\sf K} has the following axiom schemes and rules:
\begin{description}
\item[Taut.] All propositional tautologies,
  \item[K.] $\b(A\r B)\r(\b A\r \b B)$,
\end{description}
The rules of inference are \textit{Modus Ponens} and \textit{Necessitation rule}:
\begin{description}
\item[MP.]  from $\vdash A$ and $\vdash A\r B$, infer $\vdash B$.
\item[Nec.]  from $\vdash A$, infer $\vdash \b A$.
\end{description}
Other modal logics are obtained by adding the following axiom schemes to {\sf K} in various combinations:
\begin{description}
\item[T.] $\b A\r A$.
\item[D.] $\b A\r\Diamond A$
\item[4.] $\b A\r\b\b A$.
\item[B.] $\neg A\r\b\neg\b A$.
\item[5.] $\neg \b A\r\b\neg\b A$.
\end{description}
In this paper we consider the following 15 normal modal logics: {\sf K}, {\sf T}, {\sf D}, {\sf K4}, {\sf KB}, {\sf K5}, {\sf KB5}, {\sf K45}, {\sf D5}, {\sf DB}, {\sf D4}, {\sf D45}, {\sf TB}, {\sf S4}, {\sf S5}. The name of each modal logic indicates the list of its axioms, except {\sf S4} and {\sf S5} which can be named {\sf KT4} and {\sf KT45}, respectively. The axiom D is equivalent (over {\sf K}) to $\b \bot\r\bot$.

The G\"{o}del-L\"{o}b provability logic $\GL$ has a central role here. $\GL$ is obtained from ${\sf K4}$ (or ${\sf K}$) by adding the L\"{o}b axiom scheme:
$$\b(\b A\r A)\r\b A$$
It is known that $\GL$ is complete with respect to provability interpretation of $\b$ in $\PA$. More precisely, define the arithmetical interpretation $*$ as a mapping from propositional variables to sentences of $\PA$, such that $\bot^* =\bot$, $*$ commutes with propositional connectives, and
 $(\b A)^* = Prov(\gn{A^*}).$
The modal logic {\sf K4} is sound with respect to arithmetical interpretations (see \cite{Boolos1993,Smorynski1985}). Moreover, Solovay \cite{Solovay1976} shows that $\GL$ is sound and complete.

\begin{theorem}[Solovay Arithmetical Completeness, \cite{Solovay1976}]\label{thm:Solovay Arithmetical Completeness GL}
$\GL\vdash F$ if{f} $\PA\vdash F^*$,  for every arithmetical interpretation $*$.
\end{theorem}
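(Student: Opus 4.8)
The plan is to prove the two directions separately. The left-to-right (\emph{soundness}) direction, that $\GL\vd F$ implies $\PA\vd F^*$ for every arithmetical interpretation $*$, is a routine induction on $\GL$-derivations. The right-to-left (\emph{arithmetical completeness}) direction is the deep half, and I would carry it out by Solovay's self-referential construction together with the Kripke completeness of $\GL$.

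For soundness I would induct on the length of a derivation of $F$ in $\GL$. Propositional tautologies translate to theorems of $\PA$ because $*$ commutes with the connectives and $\PA$ proves all propositional tautologies, and \emph{Modus Ponens} is preserved since $\PA$ is closed under detachment. The axiom K and the rule Nec translate into the second and first Hilbert--Bernays--L\"ob derivability conditions, namely $\PA\vd Prov(\gn{A^*\r B^*})\r\big(Prov(\gn{A^*})\r Prov(\gn{B^*})\big)$ and ``$\PA\vd A^*$ implies $\PA\vd Prov(\gn{A^*})$'', while the axiom 4 translates into the third condition $\PA\vd Prov(\gn{A^*})\r Prov(\gn{Prov(\gn{A^*})})$; this already yields soundness of ${\sf K4}$, as noted before the theorem. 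The only remaining case is the L\"ob axiom $\b(\b A\r A)\r\b A$, whose translation $\PA\vd Prov(\gn{Prov(\gn{A^*})\r A^*})\r Prov(\gn{A^*})$ is precisely the arithmetized form of L\"ob's theorem; I would derive it inside $\PA$ from the three derivability conditions by applying the Fixed Point Lemma (Lemma~\ref{lem:Fixed Point Lemma PA}) to obtain a sentence $\psi$ with $\PA\vd\psi\leftrightarrow\big(Prov(\gn{\psi})\r A^*\big)$ and reasoning with it.

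For completeness I would prove the contrapositive: given $\GL\not\vd F$, I must construct an interpretation $*$ with $\PA\not\vd F^*$. By the Kripke completeness of $\GL$ with respect to finite, transitive, irreflexive trees, there is a finite model $\M$ with worlds $1,\dots,n$ and root $1$ that refutes $F$ at $1$. Following Solovay, I would adjoin a new bottom world $0$ with $0\,R\,i$ for every $i\ge 1$, and use the recursion theorem to define a primitive recursive ``Solovay function'' $h$ with $h(0)=0$ that moves from its current world $i$ to an $R$-successor $j$ exactly when it discovers a $\PA$-proof of the sentence ``$\lim_m h(m)\neq j$''. Writing $\ell_i$ for the arithmetical sentence expressing $\lim_m h(m)=i$, the heart of the matter is \emph{Solovay's Lemma}: $\PA\vd\bigvee_{i=0}^n\ell_i$ with the $\ell_i$ mutually exclusive; if $i\,R\,j$ then $\PA\vd\ell_i\r\neg Prov(\gn{\neg\ell_j})$; and for each $i\ge 1$, $\PA\vd\ell_i\r Prov(\gn{\bigvee_{iRj}\ell_j})$ (the last using that reaching $i$ forced a $\PA$-proof of $\neg\ell_i$, together with formalized $\Sigma_1$-completeness). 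Defining $p^*:=\bigvee\{\ell_i:i\Vdash p\}$ and extending $*$ to commute with the connectives, I would then prove a truth lemma by induction on subformulas $A$: for every world $i\ge 1$, $i\Vdash A$ gives $\PA\vd\ell_i\r A^*$, while $i\not\Vdash A$ gives $\PA\vd\ell_i\r\neg A^*$. The modal step $A=\b B$ is exactly where the two nontrivial clauses of Solovay's Lemma are applied. Since $1\not\Vdash F$ we obtain $\PA\vd\ell_1\r\neg F^*$; and because the clause for $0\,R\,1$ gives $\PA\vd\ell_0\r\neg Prov(\gn{\neg\ell_1})$ while $\ell_0$ is true in the standard model, $\PA+\ell_1$ is consistent, whence $\PA\not\vd F^*$.

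The main obstacle is the self-referential definition of $h$ together with the proof of Solovay's Lemma: one must define $h$ in terms of $\PA$-provability of statements about the very limit of $h$---legitimized by the recursion theorem---and then verify, combining the derivability conditions with the well-foundedness of $R$ and formalized $\Sigma_1$-completeness, that from $\ell_i$ the theory $\PA$ can neither refute an accessible $\ell_j$ nor fail to prove that the limit has moved to a strict successor. Once this lemma is established, the Kripke completeness input and the inductive truth lemma are comparatively routine.
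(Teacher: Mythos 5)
Your proposal is correct. The paper offers no proof of its own for this theorem---it is imported by citation to Solovay (1976)---and your sketch is exactly the standard Solovay argument from that source: soundness by induction on $\GL$-derivations using the Hilbert--Bernays--L\"ob derivability conditions together with arithmetized L\"ob's theorem (obtained from the Fixed Point Lemma), and completeness via Kripke completeness of $\GL$ for finite irreflexive transitive trees, the recursion-theoretic Solovay function $h$ with its limit sentences $\ell_i$, Solovay's Lemma, the truth lemma, and the final consistency argument from the truth of $\ell_0$ in the standard model.
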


\subsection{Modal logics with fixed point operators}\label{sec:ML(FP)}
 Suppose {\sf ML} is a propositional modal logic defined over a language ${\cal L}$. We write $A(p,q_1,\ldots,q_n)$ to denote that $p,q_1,\ldots,q_n$ are all the propositional variables occurring in the formula $A$.  An occurrence of a propositional variable $p$ in the formula $A(p,q_1,\ldots,q_n)$ is called \textit{modalized} if $p$ occurs in the scope of a modal operator $\b$ or $\Diamond$. Let ${\cal L}({\sf FP})$ be the extension of ${\cal L}$ by n-ary \textit{fixed point operators} (or \textit{diagonal operators}) $\delta_A(q_1,\ldots,q_n)$ for each ${\cal L}$-formula $A(p,q_1,\ldots,q_n)$ in which $p$ is modalized. The \textit{fixed point extension} (or \textit{diagonal extension}) ${\sf ML(FP})$ of modal logic ${\sf ML}$ in the language ${\cal L}({\sf FP})$ is an extension of ${\sf ML}$ by axiom schemes
\[ \delta_A(B_1,\ldots,B_n) \leftrightarrow A(\delta_A(B_1,\ldots,B_n),B_1,\ldots,B_n),\]
where $B_1,\ldots,B_n$ are ${\cal L}({\sf FP})$-formulas.

Using fixed point extensions of modal logics we can give the analogs of the Knower and the Believer Paradoxes.
\begin{theorem} [The Knower Paradox in the framework of modal logics]\label{Thm Knower Paradox-modal case}
Let {\sf ML} be a propositional modal logic which contain the axiom scheme
\begin{description}
  \item[T.] $\b A\r A$,
  \end{description}
then ${\sf ML(FP)}$ is inconsistent.
\end{theorem}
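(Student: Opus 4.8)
The plan is to mirror the arithmetic argument of Theorem~\ref{thm:Knower, simple case} line for line, letting the modality $\b$ play the role of the knowledge predicate $K$ and letting the fixed point axiom play the role of the Fixed Point Lemma. First I would single out the one-variable formula $A(p) := \neg\b p$, in which the only propositional variable $p$ is modalized, since it lies in the scope of $\b$. Hence $\delta_A$ is a legitimate ($0$-ary) fixed point operator of the language $\mathcal{L}({\sf FP})$, and writing $D := \delta_A$ the corresponding instance of the fixed point axiom yields
\[ D \leftrightarrow \neg\b D. \]
This is precisely the modal rendering of the Knower sentence ``nobody knows this statement to be true''.

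Next I would reproduce the derivation of Theorem~\ref{thm:Knower, simple case}. From the axiom scheme T we obtain $\b D\r D$; combining this with the left-to-right direction $D\r\neg\b D$ of the fixed point equivalence gives $\b D\r\neg\b D$, and hence, by propositional reasoning, $\neg\b D$. Feeding this back through the right-to-left direction $\neg\b D\r D$ of the fixed point axiom yields $D$ as a theorem of ${\sf ML(FP)}$. Since ${\sf ML}$ is a normal modal logic it is closed under Necessitation, so from $\vd D$ I get $\vd\b D$, which together with $\neg\b D$ produces $\bot$. Thus ${\sf ML(FP)}$ is inconsistent.

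There is essentially no hard step here: the whole content is the observation that the fixed point operator supplies exactly the self-referential equivalence $D\leftrightarrow\neg\b D$ that the Fixed Point Lemma supplied in the arithmetic setting, after which axiom T and Necessitation close the loop. The only points that need a moment's care are, first, checking that $p$ really is modalized in $\neg\b p$ so that $\delta_A$ is well-formed --- which is immediate --- and, second, noting that the argument invokes Necessitation, so that ${\sf ML}$ must be understood as a normal modal logic (as are all the logics under consideration here) rather than merely a theory whose theorems happen to contain every instance of T.
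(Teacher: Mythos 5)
Your proof is correct and follows exactly the route the paper intends: the paper's own proof simply says to take the derivation of Theorem~\ref{thm:Knower, simple case} and replace the knowledge predicate $K$ by $\b$, which is precisely what you did, using the fixed point axiom for $A(p)=\neg\b p$ in place of the Fixed Point Lemma and closing the argument with axiom T and Necessitation. Your added remark that {\sf ML} must be closed under Necessitation is a sensible clarification of the same implicit assumption the paper makes (all the modal logics it considers are normal).
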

\begin{proof}
The proof is obtained from the proof of Theorem \ref{thm:Knower, simple case} by replacing the knowledge predicate $K$ by $\b$.\qed
\end{proof}

\begin{theorem}[The Believer Paradox in the framework of modal logics]
Let {\sf ML} be a propositional normal modal logic which contain the axiom schemes
  \begin{description}
  \item[D.] $\b A\r\Diamond A$,
  \item[4.] $\b A\r \b\b A$
  \end{description}
then ${\sf ML(FP)}$ is inconsistent.
\end{theorem}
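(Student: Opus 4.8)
The plan is to transplant the arithmetical Believer paradox (Theorem~\ref{thm:Believer, simple case}) into the propositional modal setting, exactly as the modal Knower paradox (Theorem~\ref{Thm Knower Paradox-modal case}) was obtained from Theorem~\ref{thm:Knower, simple case}: the fixed point operator supplies the self-referential formula, and the belief predicate $B(\gn{\cdot})$ is replaced throughout by the box $\b$. Concretely, I would take $A(p):=\neg\b p$, in which the single variable $p$ is modalized, and set $G:=\delta_A$. The defining axiom of ${\sf ML(FP)}$ then gives the theorem $\vd G\leftrightarrow\neg\b G$, the modal reading of ``$G$ says that $G$ is not believed''.

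The first substantive step is to box this equivalence. Here one uses the fact that, in contrast with the justification setting where \emph{JReg} fails, every normal modal logic validates the Regularity rule and hence the derived schema $\b(X\leftrightarrow Y)\r(\b X\leftrightarrow\b Y)$. Applying \emph{Nec} to the fixed point theorem and then this schema yields $\vd\b G\leftrightarrow\b\neg\b G$. Abbreviating $\Diamond\b G$ for $\neg\b\neg\b G$ and contraposing the biconditional, I would record the key equation $\vd\neg\b G\leftrightarrow\Diamond\b G$.

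The core of the argument then combines the two hypothesised axioms. Axiom~$4$ gives $\vd\b G\r\b\b G$, and the instance of axiom~$D$ at $\b G$ gives $\vd\b\b G\r\Diamond\b G$; chaining these two implications with the equation $\Diamond\b G\leftrightarrow\neg\b G$ from the previous paragraph produces $\vd\b G\r\neg\b G$, and hence $\vd\neg\b G$ by propositional reasoning. Substituting $\neg\b G$ into the fixed point theorem gives $\vd G$, whereupon \emph{Nec} yields $\vd\b G$. The theorems $\neg\b G$ and $\b G$ are contradictory, so ${\sf ML(FP)}$ is inconsistent.

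The derivation itself is routine once the right fixed point is chosen; the point worth flagging is that \emph{both} hypotheses are used essentially and in a complementary way --- axiom~$4$ to climb from $\b G$ up to $\b\b G$, and axiom~$D$ to descend from $\b\b G$ back to a negative assertion about $\b G$ --- which is precisely how the arithmetical proof of Theorem~\ref{thm:Believer, simple case} must use its transitivity and consistency schemes in place of the single veracity scheme $T$ that drives the Knower paradox. The only place where full normality (beyond $4$ and $D$) is genuinely needed is the passage to $\b G\leftrightarrow\b\neg\b G$, and this is exactly the step that would be unavailable in a justification logic, where $t:A\leftrightarrow t:B$ cannot be inferred from $A\leftrightarrow B$.
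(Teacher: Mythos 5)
Your proof is correct. The paper itself states this theorem without proof (only the modal Knower analogue gets a proof, by pointing back to the arithmetical argument), so the natural benchmark is the paper's fully written-out proof of the corresponding justification-logic theorem with axioms jD and j4 in Section~\ref{sec:JL(FP)}, and your argument is precisely its modal counterpart: take the fixed point $G\leftrightarrow\neg\b G$, use normality to push the equivalence under the box, refute $\b G$ by playing axiom 4 against axiom D, then recover $G$ from the fixed point axiom and apply Nec to obtain the contradictory pair $\b G$, $\neg\b G$. The only cosmetic differences are that you box the whole biconditional and invoke D directly at $\b G$ (i.e.\ $\b\b G\r\Diamond\b G$), whereas the paper's justification version boxes a single implication and uses D in the derived ``no contradictory beliefs'' form (Lemma~\ref{lem: axiom D in JD}, $s:\neg A\r\neg t:A$; modally $\b\neg\b G\r\neg\b\b G$) --- the two routes are interchangeable over any normal modal logic. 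One tiny imprecision in your closing remark: full normality is needed not only for the passage to $\b G\leftrightarrow\b\neg\b G$ but also at the very end, where Nec is applied to the theorem $G$; this does not affect the proof, since the hypothesis of the theorem grants normality outright.
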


Thus, for example, the systems ${\sf T(FP)}$, ${\sf S4(FP)}$, and ${\sf D4(FP)}$ are inconsistent. On the other hand, by the De Jongh-Sambin Fixed Point Theorem, ${\sf K4(FP)}$ is a conservative extensions of ${\sf GL}$, and hence ${\sf K4(FP)}$  is consistent.

\begin{theorem}[De Jongh-Sambin Fixed Point Theorem, \cite{Boolos1993,Smorynski1985}]\label{thm:Fixed Point Theorem GL}
For any \GL-formula $A(p,\bar{q})$ in which $p$ is modalized, there exists a unique formula $D(\bar{q})$ such that $$\GL\vd D(\bar{q})\leftrightarrow A(D(\bar{q}),\bar{q}).$$
\end{theorem}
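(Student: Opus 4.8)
The plan is to prove both existence and uniqueness of the fixed point formula $D(\bar{q})$ for a given $A(p,\bar{q})$ in which $p$ is modalized. I would structure the argument in two parts. For \textbf{existence}, the standard approach is by induction on the number of occurrences of $\b$ (and $\Diamond$) in $A$, or more precisely on the modal structure of the positions in which $p$ occurs. The base case is when $p$ does not occur in $A$ at all, in which case $A(p,\bar{q})=A(\bar{q})$ already serves as its own fixed point trivially, since $\GL\vd A(\bar{q})\leftrightarrow A(\bar{q})$. The substantive case isolates each modalized occurrence of $p$: writing $A$ so that $p$ sits inside some $\b C$, one applies the induction hypothesis to the subformulas and assembles a candidate $D$. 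A clean way to organize this is to reduce to the normal form where $A(p,\bar{q})$ can be written with $p$ occurring only inside boxes, treat those boxed subformulas as fresh atoms, and solve the resulting simpler fixed point equations first.

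The \textbf{key lemma} driving the induction is Substitution of Equivalents, which in $\GL$ (a normal modal logic, so the Regularity rule $Reg$ is available) gives the rule $SE$ mentioned earlier in the excerpt: from $\GL\vd X\leftrightarrow Y$ one derives $\GL\vd C[X]\leftrightarrow C[Y]$ for any context $C[~]$. This is exactly the property the introduction flags as failing for justification logics but holding for modal logics via $Reg$, so it is legitimate to invoke it here. Using $SE$, once a fixed point is found for a subformula it can be plugged into the surrounding context while preserving provable equivalence, which is what makes the inductive assembly go through.

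The \textbf{main obstacle}, and the genuinely nontrivial part, is \emph{uniqueness}, and here the L\"{o}b axiom $\b(\b A\r A)\r\b A$ (equivalently, the L\"{o}b rule: from $\vd \b A\r A$ infer $\vd A$) is indispensable. The plan is to show that if both $D$ and $D'$ satisfy $\GL\vd D\leftrightarrow A(D,\bar{q})$ and $\GL\vd D'\leftrightarrow A(D',\bar{q})$, then $\GL\vd D\leftrightarrow D'$. The crucial auxiliary fact is that modalized variables yield a ``contraction'' property: for $A(p,\bar{q})$ with $p$ modalized, one proves $\GL\vd \b(p\leftrightarrow p')\r(A(p,\bar{q})\leftrightarrow A(p',\bar{q}))$ by induction on $A$, using that each occurrence of $p$ lies under a $\b$ together with axiom $K$ and the distribution of $\b$ over the propositional connectives. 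Instantiating $p:=D$, $p':=D'$ and combining with the two fixed point equations gives $\GL\vd \b(D\leftrightarrow D')\r(D\leftrightarrow D')$, whereupon the L\"{o}b rule delivers $\GL\vd D\leftrightarrow D'$. I expect the bookkeeping in this contraction lemma to be the most delicate step, since one must carefully track that every occurrence of $p$ is boxed so that the hypothesis $\b(p\leftrightarrow p')$ can be pushed inward at each such occurrence.

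Since the final statement only asserts existence and uniqueness of $D(\bar{q})$, this two-pronged strategy — inductive construction via $SE$ for existence, and the L\"{o}b-based contraction argument for uniqueness — suffices, and I would cite \cite{Boolos1993,Smorynski1985} for the detailed syntactic construction rather than reproduce the full induction here.
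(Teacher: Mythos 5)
Your uniqueness half is correct and is exactly the standard Bernardi--de Jongh--Sambin argument: the contraction lemma $\GL\vd \b(p\leftrightarrow p')\r(A(p,\bar{q})\leftrightarrow A(p',\bar{q}))$ for modalized $p$, followed by L\"{o}b's rule. (One bookkeeping point you gloss over: pushing $\b(p\leftrightarrow p')$ under \emph{nested} boxes needs axiom 4, i.e.\ $\b(p\leftrightarrow p')\r\b\b(p\leftrightarrow p')$, which is a theorem of $\GL$; axiom K and distribution alone do not suffice.) Note the paper itself gives no proof of Theorem~\ref{thm:Fixed Point Theorem GL} --- it cites Boolos and Smory\`{n}ski --- so the comparison is against the standard construction, and there your existence half has a genuine gap.

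The gap is your summary division of labor: ``inductive construction via SE for existence, and the L\"{o}b-based contraction argument for uniqueness.'' L\"{o}b's axiom is indispensable for \emph{existence} as well, and SE cannot carry that half on its own: SE (via the rule Reg) is already available in ${\sf K4}$, yet ${\sf K4}$ lacks the fixed point property --- no formula $D$ satisfies ${\sf K4}\vdash D\leftrightarrow\neg\b D$, since evaluating at a model consisting of a single reflexive world (a transitive frame) would force $D\leftrightarrow\neg D$. Indeed, Smory\`{n}ski's conservativity theorem quoted in the paper says that adding fixed point operators to ${\sf K4}$ yields exactly the theorems of $\GL$, i.e.\ fixed points for modalized formulas are, over ${\sf K4}$, tantamount to L\"{o}b's axiom. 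Concretely, the induction you describe has no engine: with base case ``$p$ does not occur in $A$,'' the inductive step never reaches formulas such as $A(p)=B(\b C(p))$ with a single maximal boxed $p$-subformula, and it is precisely there that one must \emph{exhibit} a candidate, $D=B(\b C(B(\top)))$, and verify the equivalence by a L\"{o}b-style argument; the special case $B(x)=x$, $C(p)=p\r A_0$ is exactly the paper's equation~(\ref{eq:Lob formula fixed point}), $\b(\b A_0\r A_0)\leftrightarrow\b A_0$, which Sambin and Valentini single out as the instance that powers all of diagonalization. SE then serves only to assemble these single-box solutions in the induction on the number of maximal boxed $p$-subformulas. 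So the skeleton you give is right, but the single-box construction and its L\"{o}b-based verification are the heart of existence, and they are absent from your sketch.
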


Smory\`{n}ski in \cite{Smorynski1985} showed that ${\sf K4(FP)}$ is a conservative extensions of ${\sf GL}$.\footnote{${\sf K4(FP)}$ is called the \textit{Diagonalization Operator Logic} DOL, in \cite{Smorynski1985}.}
\begin{theorem}[\cite{Smorynski1985}]
Given any \GL-formula $A$, the following are equivalent:
\begin{itemize}
\item $\GL\vdash A$.
\item ${\sf K4(FP)}\vdash A$.
\end{itemize}
\end{theorem}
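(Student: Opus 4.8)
The plan is to prove the two inclusions separately. For the direction $\GL\vdash A\Rightarrow{\sf K4(FP)}\vdash A$ it suffices, since ${\sf K4(FP)}$ extends ${\sf K4}$ and is closed under MP and Nec, to derive the L\"ob axiom $\b(\b q\r q)\r\b q$ inside ${\sf K4(FP)}$; then every $\GL$-theorem is already a ${\sf K4(FP)}$-theorem. For the converse, which is the genuine conservativity claim, I would set up a translation $*$ of ${\cal L}({\sf FP})$-formulas into $\GL$-formulas that erases the fixed point operators by replacing them with the explicit De Jongh--Sambin fixed points (Theorem~\ref{thm:Fixed Point Theorem GL}), is the identity on fixed-point-free formulas, and carries theorems of ${\sf K4(FP)}$ to theorems of $\GL$.

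For the first direction I would instantiate the fixed point operator at the ${\cal L}$-formula $A(p,q)=\b(p\r q)$, in which $p$ is modalized, so the fixed point axiom yields a formula $D$ with $\vdash D\leftrightarrow\b(D\r q)$. The key lemma is that $D$ is self-boxing, i.e. $\vdash D\r\b D$: this follows by chaining $D\r\b(D\r q)$ (one half of the fixed point), the axiom 4 instance $\b(D\r q)\r\b\b(D\r q)$, and $\b\b(D\r q)\r\b D$ (obtained from the other half $\b(D\r q)\r D$ by Nec and K). Using self-boxing together with axiom K I would then derive $\vdash\b(D\r q)\r\b q$, hence $\vdash D\r\b q$; finally, necessitating $D\r\b q$ and combining it under a box with the L\"ob antecedent $\b(\b q\r q)$ gives $\b(\b q\r q)\r\b(D\r q)$, i.e. $\b(\b q\r q)\r D$, which chained with $D\r\b q$ yields L\"ob.

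For the second direction I would define $*$ by recursion on formula complexity: it commutes with $\bot$, the propositional connectives, and $\b$, fixes propositional variables, and sets $(\delta_A(B_1,\ldots,B_n))^*:=D_A(B_1^*,\ldots,B_n^*)$, where $D_A(\bar q)$ is a fixed choice, one per ${\cal L}$-formula $A$, of a De Jongh--Sambin fixed point with $\GL\vdash D_A(\bar q)\leftrightarrow A(D_A(\bar q),\bar q)$. Since $A$ is fixed-point-free, $*$ is the identity on $\GL$-formulas. I would then show by induction on ${\sf K4(FP)}$-derivations that $*$ sends theorems to $\GL$-theorems: the ${\sf K4}$ axioms translate to ${\sf K4}$-axiom instances and hence to $\GL$-theorems; the fixed point axiom $\delta_A(\bar B)\leftrightarrow A(\delta_A(\bar B),\bar B)$ translates, as $A$ is fixed-point-free and $*$ commutes with the connectives and $\b$, to $D_A(\bar B^*)\leftrightarrow A(D_A(\bar B^*),\bar B^*)$, which is the substitution instance of the De Jongh--Sambin equation obtained by putting $B_i^*$ for $q_i$, and is therefore a $\GL$-theorem; and MP and Nec are preserved because $*$ commutes with $\r$ and $\b$. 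Consequently ${\sf K4(FP)}\vdash A$ implies $\GL\vdash A^*$, and since $A^*=A$ for a $\GL$-formula $A$, we conclude $\GL\vdash A$.

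The main obstacle will be the second direction, and specifically making $*$ well defined and stable under substitution in the presence of nested fixed point operators. The recursion is legitimate because each $B_i$ is a proper subterm of $\delta_A(\bar B)$, and only existence---not uniqueness---of the De Jongh--Sambin fixed point is needed, one $D_A$ being chosen once and for all per $A$. The point requiring care is that the translated fixed point axiom must come out as a genuine substitution instance of the fixed-point equation; this uses that $A$ contains no fixed point operators together with the closure of $\GL$ under uniform substitution, so that substituting the $\GL$-formulas $B_i^*$ into the $\GL$-theorem $D_A(\bar q)\leftrightarrow A(D_A(\bar q),\bar q)$ again yields a $\GL$-theorem.
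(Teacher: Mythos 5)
Your proof is correct, and it is essentially the argument the paper relies on: the paper does not prove this theorem itself (it cites Smory\'{n}ski), but the route it points to --- conservativity of ${\sf K4(FP)}$ over $\GL$ obtained by eliminating the operators $\delta_A$ via the De Jongh--Sambin fixed points, and, in the other direction, L\"{o}b's axiom extracted from the diagonal fixed point of $\b(p\r q)$ --- is exactly your two-part construction. The only point worth making explicit is that your derivation of L\"{o}b for the variable $q$ is schematic: since the fixed point axioms form a scheme over all ${\cal L}({\sf FP})$-formulas $B$, the same derivation with $\delta_A(B)\leftrightarrow\b(\delta_A(B)\r B)$ yields every instance $\b(\b B\r B)\r\b B$ needed to simulate $\GL$-derivations.
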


In fact $\GL$ is a modal logic with built-in fixed point property (the Fixed Point Lemma of arithmetic is modally expressible in $\GL$ by the De Jongh-Sambin Fixed Point Theorem). In $\GL$ the following is provable
\begin{equation}\label{eq:Lob formula fixed point}
\b(\b A\r A) \leftrightarrow \b A.
\end{equation}
Thus $\b A$ is the fixed point of $B(p)=\b(p\r A)$. As it is pointed out in \cite{SambinValentini1982}, the De Jongh-Sambin Fixed Point Theorem shows that ``how the single instance LF [here the fixed point equation (\ref{eq:Lob formula fixed point})] is sufficient to yield the strongest version of diagonalization expressible in the language of modal logic."

There are other modal logics with the fixed point property:
\begin{itemize}
\item G\"{o}del-L\"{o}b-Solovay provability logic ${\sf GLS}$ introduced by Solovay in \cite{Solovay1976}. Axioms of {\sf GLS} are all theorems of $\GL$ and the axiom scheme $\b A\r A$, and its only rule of inference is Modus Ponens. \`{E}gr\`{e} in \cite{Egre2005} claims that {\sf GLS} gives a solution to the Knower Paradox via a hierarchy of rules (Modus Ponens could be applied to all theorems, while Necessitation rule could be applied only to theorems of \GL).
\item Sacchetti in \cite{Sacchetti2001} introduced two families of modal logics with the fixed point property: ${\sf K}+\b(\b^n A\r A)\r\b A$, for $n\geq 1$, and ${\sf K}+\b^n\bot$, for $n\geq 1$, where $\b^n$ denotes $n$ consecutively occurrences of $\b$.
\end{itemize}

As mentioned in Section \ref{sec:Fixed points in arithmetic}, some authors formalize the Surprise Test Paradox in Peano arithmetic and interpret the surprise in terms of deducibility (see e.g. \cite{Shaw,Fitch1964,KritchmanRaz}). Regarding Solovay arithmetical completeness of $\GL$, it is possible to formalize the paradox in $\GL$, see Appendix \ref{Appendix:Surprise test paradox in GL}.

\subsection{Modal $\mu$-calculus}\label{sec:Modal mu-calculus}
Modal $\mu$-calculus \cite{BradfieldStirling2007,Jager2010,Kozen1983} is a logic used extensively in certain areas of computer science. It was first introduced by Kozen in \cite{Kozen1983}. The language of the modal $\mu$-calculus is an extension of the language of modal logic with variable binding operator $\mu p$ (the least fixed point operator). The expression $\mu p.A$ is intended to present, by the Knaster-Tarski theorem, the least fixed point of the operator naturally associated with the formula $A(p)$.

\begin{theorem}[Knaster-Tarski]
Given a set $S$, any monotone operator $\Phi$
\[\Phi:\mathcal{P}(S) \r \mathcal{P}(S)\]
within the ordering $(\mathcal{P}(S),\subseteq)$ has a least fixed point and a greatest fixed point.
\end{theorem}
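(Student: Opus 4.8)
The plan is to prove the Knaster–Tarski theorem by an explicit construction of the two fixed points as an intersection and a union, respectively, rather than by transfinite iteration. First I would define the candidate least fixed point as the intersection of all \emph{pre-fixed points} of $\Phi$, that is, $L=\bigcap\{X\subseteq S : \Phi(X)\subseteq X\}$, and the candidate greatest fixed point dually as the union of all \emph{post-fixed points}, $G=\bigcup\{X\subseteq S : X\subseteq \Phi(X)\}$. Both collections are nonempty: $S$ itself satisfies $\Phi(S)\subseteq S$ trivially, and $\emptyset$ satisfies $\emptyset\subseteq\Phi(\emptyset)$, so the intersection and the union are taken over nonempty families and are well defined subsets of $S$.

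Next I would verify that $L$ is a fixed point of $\Phi$, treating the two inclusions separately. For $\Phi(L)\subseteq L$: if $X$ is any pre-fixed point then $L\subseteq X$, so by monotonicity $\Phi(L)\subseteq\Phi(X)\subseteq X$; since this holds for every pre-fixed point $X$, we get $\Phi(L)\subseteq\bigcap X = L$. For the reverse inclusion $L\subseteq\Phi(L)$: from $\Phi(L)\subseteq L$ and monotonicity we obtain $\Phi(\Phi(L))\subseteq\Phi(L)$, which says $\Phi(L)$ is itself a pre-fixed point; hence $L\subseteq\Phi(L)$ by definition of $L$ as the least such. Combining the two inclusions gives $\Phi(L)=L$. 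Minimality is immediate, since any fixed point is in particular a pre-fixed point and so contains $L$. The argument for $G$ is entirely dual, using post-fixed points and reversing every inclusion.

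The only genuine subtlety, and the step I would single out as the main point to get right, is the direction $L\subseteq\Phi(L)$: it is tempting to expect equality directly, but one must observe that the intersection construction only guarantees $\Phi(L)\subseteq L$ at the outset, and the reverse inclusion is recovered precisely by feeding $\Phi(L)$ back into the definition as a new pre-fixed point. This little bootstrapping move is where monotonicity does its essential work, and it is the reason no iteration or appeal to ordinals or cardinality of $S$ is needed. Everything else is bookkeeping with set inclusions, so I would keep those parts terse and concentrate the exposition on this closure argument.
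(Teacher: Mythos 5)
Your proof is correct and complete: the construction of $L$ as the intersection of all pre-fixed points, the key bootstrapping observation that $\Phi(L)\subseteq L$ forces $\Phi(L)$ itself to be a pre-fixed point (yielding the reverse inclusion $L\subseteq\Phi(L)$ by monotonicity), and the dual treatment of $G$ via post-fixed points are all carried out accurately, and minimality/maximality follow exactly as you say. For comparison: the paper states the Knaster--Tarski theorem only as background for the modal $\mu$-calculus section and gives no proof of it at all, so there is no paper argument to diverge from; what you have written is the standard impredicative proof, and your closing remark correctly identifies why it needs no ordinal iteration or cardinality considerations.
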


Formulas of modal mu-calculus are constructed by the following grammar:
  \[ A::= p~|~\neg A~|~A\wedge A~|~\b A~|~\mu p.A,\]
  provided that every free occurrence of $p$ is \textit{positive} in $A$, i.e. every occurrence of $p$ in $A$ occurs within the scope of an even number of negations (in this case we say that $A$ is $p$-positive). The system ${\sf K(\mu)}$ is obtained from the basic modal logic {\sf K} by adding the \textit{closure axiom} scheme and the \textit{induction rule}, provided that $A$ is $p$-positive:

\begin{description}
     \item[$\mu$-CL.] $A(\mu p. A(p))\leftrightarrow \mu p.A(p)$,
         \item[$\mu$-IND.] from $\vdash A(B)\r B$ infer $\vdash \mu p. A(p)\r B$.
\end{description}
The greatest fixed point operator is defined as follows:
  \[\nu p.A := \neg\mu p. \neg A(\neg p).\]
The background modal logic {\sf K} can be extended to other modal logics to obtain consistent extensions of ${\sf K(\mu)}$, such as ${\sf S4(\mu)}$ and ${\sf S5(\mu)}$. For more detailed exposition see \cite{BradfieldStirling2007,Jager2010}.

Mardaev in \cite{Mardaev1994} showed that special family of $p$-positive formulas has fixed points in $\s$:
for any \s-formula $A(p,\b q_1,\ldots,\b q_n)$ in which $p$ is positive, there exists a formula $D(\b q_1,\ldots,\b q_n)$ such that
$$\s\vd D(\b q_1,\ldots,\b q_n)\leftrightarrow A(D(\b q_1,\ldots,\b q_n),\b q_1,\ldots,\b q_n).$$

Mardaev also shows that every $p$-positive $\Sigma$-formula $\varphi(p,\bar{q})$ has a fixed point in ${\sf K4}$ (cf. \cite{Mardaev1992}), and every $p$-positive $\Pi$-formula $\varphi(p,\bar{q})$ has a fixed point in $\GL$ (cf. \cite{Mardaev1993a}). For the definitions of $\Sigma$- and $\Pi$-formulas and a survey of Mardaev's results see \cite{Mardaev2007}.

It is worth noting that the Knower Paradox cannot be formalized in the modal $\mu$-calculus, since we need the fixed point of the formula $A(p)=\neg \b p$ in which $p$ is not positive. However, Halpern and Moses \cite{HalpernMoses1986} formalized some versions of the Surprise Test Paradox within a fixed point modal logic similar to  modal $\mu$-calculus.

\subsection{Connections}
The connection between $\GL$ and modal $\mu$-calculus has been studied by authors. Van Benthem in \cite{Benthem2006} showed that $\GL$ can be faithfully embedded into  ${\sf K(\mu)}$. He also showed that

\begin{theorem}
The logics ${\sf K(\mu)} + \b(\b A\r A)\r\b A$ and ${\sf K(\mu)} + \b A\r\b\b A + \mu p. \b p $ are equivalent.
\end{theorem}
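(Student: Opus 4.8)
The plan is to show that the two systems have exactly the same theorems by deriving the extra axioms of each inside the other; since both extend the common base ${\sf K(\mu)}$ (so {\bf MP}, {\bf Nec}, $\mu$-CL and $\mu$-IND are available on both sides), it suffices to treat the axioms that differ. Write $S_1 = {\sf K(\mu)} + \b(\b A\r A)\r\b A$ and $S_2 = {\sf K(\mu)} + \b A\r\b\b A + \mu p.\b p$.

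First I would check that every axiom of $S_2$ is provable in $S_1$, i.e.\ derive both $\b A\r\b\b A$ and $\mu p.\b p$ from L\"ob. Transitivity is the classical derivation of axiom {\bf 4} from the L\"ob axiom over {\sf K}: applying {\bf Nec} and {\bf K} to the tautology $A\r((\b A\w\b\b A)\r(A\w\b A))$ gives $\b A\r\b(\b B\r B)$ for $B=A\w\b A$ (note $\b B\leftrightarrow\b A\w\b\b A$), and the L\"ob instance $\b(\b B\r B)\r\b B$ then yields $\b A\r\b B$, whence $\b A\r\b\b A$. For the fixed-point axiom, let $D=\mu p.\b p$; the closure axiom $\mu$-CL gives $\b D\leftrightarrow D$, so $\vd\b D\r D$, hence $\vd\b(\b D\r D)$ by {\bf Nec}, and the L\"ob instance at $D$ gives $\vd\b D$, so $\vd D$, that is $\vd\mu p.\b p$.

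The converse inclusion is the main point, and I would argue it syntactically. The key observation is that, instantiating $\mu$-IND at $A(p)=\b p$ and using the axiom $\mu p.\b p$, one obtains the derived rule ``from $\vd\b B\r B$ infer $\vd B$'' (apply $\mu$-IND to get $\vd\mu p.\b p\r B$, then detach the axiom by {\bf MP}). It therefore suffices to exhibit, for each $A$, a formula $B$ with $\vd_{{\sf K4}}\b B\r B$ and with $B$ the L\"ob formula itself. I would take $B:=\b(\b A\r A)\r\b A$ and verify $\vd_{{\sf K4}}\b B\r B$: writing $C=\b(\b A\r A)$, transitivity gives $C\r\b C$; from $\b(C\r\b A)$ and $\b C$ one obtains $\b\b A$ by {\bf K}; and from $C$ and $\b\b A$ one obtains $\b A$, since the tautology $(\b A\r A)\w\b A\r A$, necessitated and distributed, yields $C\w\b\b A\r\b A$. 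Hence $\vd_{{\sf K4}}\b B\r B$, and the derived rule gives $\vd B$, which is precisely the L\"ob axiom.

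The hard part is this converse direction, and within it the step $\vd_{{\sf K4}}\b B\r B$, where transitivity is essential: it is exactly what converts the single well-foundedness axiom $\mu p.\b p$ into the full L\"ob schema. The other thing one must spot is that $\mu$-IND at $\b p$ together with the axiom $\mu p.\b p$ is nothing but L\"ob's rule. Semantically all of this reflects the fact that both systems are determined by the transitive conversely well-founded frames, with $\mu p.\b p$ expressing conversely well-foundedness as the least fixed point of $X\mapsto\b X$; but the syntactic argument above is self-contained and avoids appealing to any completeness theorem for these $\mu$-extensions.
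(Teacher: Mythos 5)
Your proof is correct in both directions. Note, however, that the paper itself offers no proof of this statement: it is quoted as a result of van Benthem \cite{Benthem2006}, so there is no in-paper argument to compare yours against. Checking your derivation on its own merits: the inclusion of ${\sf K(\mu)} + \b A\r\b\b A + \mu p.\b p$ into ${\sf K(\mu)} + \b(\b A\r A)\r\b A$ uses the standard derivation of axiom 4 from L\"ob over {\sf K} (correct, including the replacement of $\b(A\w\b A)$ by $\b A\w\b\b A$ under the box, which is licensed by the regularity rule available in {\sf K}), and the derivation of $\mu p.\b p$ from $\mu$-CL, Nec and the L\"ob instance at $\mu p.\b p$ is also correct. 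For the converse, your two key observations are exactly right and are the substance of the theorem: (i) $\mu$-IND instantiated at the $p$-positive formula $\b p$, together with the axiom $\mu p.\b p$ and MP, yields L\"ob's rule (from $\vd \b B\r B$ infer $\vd B$); and (ii) in {\sf K4} one has $\vd \b B\r B$ for $B$ the L\"ob formula $\b(\b A\r A)\r\b A$ itself, via $C\r\b C$ (axiom 4 at $\b A\r A$), $\b(C\r\b A)\r(\b C\r\b\b A)$ (axiom K), and $C\w\b\b A\r\b A$ (necessitated tautology plus distribution), after which L\"ob's rule delivers the L\"ob axiom. This is the classical equivalence of L\"ob's rule and L\"ob's axiom over {\sf K4}, correctly assembled; the argument is self-contained, purely syntactic, and avoids any appeal to completeness of the $\mu$-calculus extensions, which is a genuine virtue given that the paper introduces no semantics for these systems.
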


Since in the $\mu$-calculus upward well-foundedness\footnote{A relation $R$ is upward well-founded if there exists no infinite sequence of worlds $w_1,w_2,\ldots$ such that $w_i R w_{i+1}$ for $i\geq 1$. It is known that upward well-foundedness is not definable in basic modal logic (see e.g. \cite{Benthem2006}).} can be expressed by the formula $\mu p.\b p$, the above theorem says that upward well-foundedness is modally definable (by the L\"{o}b axiom) together with transitivity.

Visser in \cite{Visser2005} gave another interpretation of $\GL$ into ${\sf K(\mu)}$. He also proved a generalized fixed point property for \GL.

\begin{definition}
A $\GL$-formula $\varphi(p)$ is semi-positive in $p$ if all non-modalized occurrences of $p$ are positive.
\end{definition}
\begin{theorem}[\cite{Visser2005}]
Any formula $\varphi(p)$ that is semi-positive in $p$ has an fixed point in \GL. Moreover, if all occurrences of $p$ in $\varphi(p)$ are positive, then the fixed point of $\varphi(p)$ is minimal.
\end{theorem}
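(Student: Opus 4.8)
The plan is to reduce the semi-positive case to the already-available De Jongh--Sambin Fixed Point Theorem (Theorem \ref{thm:Fixed Point Theorem GL}) by first disposing of the non-modalized occurrences of $p$, which are harmless precisely because they are positive. The key observation is that a propositional variable occurring only positively and \emph{outside} all boxes behaves like a monotone one-place Boolean operation, and such an operation has an explicit fixed point obtained simply by substituting $\bot$.

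Concretely, I would introduce a fresh variable $s$ and let $\eta(p,s)$ be the result of replacing every modalized occurrence of $p$ in $\varphi(p)$ by $s$, so that $\varphi(p)=\eta(p,p)$. By construction every occurrence of $p$ in $\eta(p,s)$ is non-modalized, hence positive (this is exactly semi-positivity), while every occurrence of $s$ is modalized. Treating the maximal $p$-free subformulas (all of which sit under a box and may contain $s$) as fresh atoms, $\eta(p,s)$ is monotone in $p$, so the propositional identity $\eta(\eta(\bot,s),s)\leftrightarrow\eta(\bot,s)$ is a tautology and hence a theorem of \GL. Writing $\lambda(s):=\eta(\bot,s)$, this says $\GL\vd\lambda(s)\leftrightarrow\eta(\lambda(s),s)$: the explicit formula $\eta(\bot,s)$ is a fixed point of $p\mapsto\eta(p,s)$.

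Now $s$ is modalized in $\lambda(s)$, since substituting $\bot$ for $p$ cannot move an occurrence of $s$ out of a box, so the De Jongh--Sambin theorem applies and yields a $p$-free formula $C$ with $\GL\vd C\leftrightarrow\lambda(C)$. Substituting $C$ for $s$ in the previous line gives $\GL\vd\lambda(C)\leftrightarrow\eta(\lambda(C),C)$, and since $\GL\vd C\leftrightarrow\lambda(C)$, the substitution-of-equivalents rule (available in \GL\ through the regularity rule $Reg$, though not in justification logics) lets me replace $\lambda(C)$ by $C$ in the first argument to obtain $\GL\vd\eta(\lambda(C),C)\leftrightarrow\eta(C,C)$. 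Chaining these equivalences yields $\GL\vd C\leftrightarrow\eta(C,C)=\varphi(C)$, so $C$ is the desired fixed point.

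For the minimality claim, when \emph{all} occurrences of $p$ are positive the operator $\varphi$ is monotone, and I would argue that the constructed $C$ is the least fixed point: the non-modalized self-reference was resolved by substituting $\bot$ (the least choice), and the modalized recursion was resolved by the De Jongh--Sambin fixed point, which on the transitive converse-well-founded frames of \GL\ computes the same set as the Knaster--Tarski least fixed point built by induction on the height of a world. Making this precise is the main obstacle: the clean route is semantic, showing by induction on height that $C$ defines the pointwise least fixed point on every finite \GL-tree and then appealing to the finite-model completeness of \GL\ to conclude $\GL\vd C\r E$ for any pre-fixed point $E$ with $\GL\vd\varphi(E)\r E$. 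The delicate point is the uniform definability of this pointwise fixed point by the single formula $C$, which is exactly what the two-step construction above secures.
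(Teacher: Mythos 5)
The paper never proves this theorem: it is stated with the citation \cite{Visser2005} and no argument, so there is no in-paper proof to compare against and your proposal must be judged on its own terms. On those terms, your existence argument is correct and complete. The splitting $\varphi(p)=\eta(p,p)$, keeping the non-modalized (hence positive) occurrences as $p$ and renaming the modalized ones to $s$, does exactly what is needed: since no occurrence of $p$ in $\eta(p,s)$ lies under a box, the maximal boxed subformulas are $p$-free and may be abstracted as atoms, making $\eta$ a monotone Boolean function of $p$; any such $f$ satisfies $f(f(\bot))\leftrightarrow f(\bot)$ propositionally, so $\GL\vd \eta(\eta(\bot,s),s)\leftrightarrow\eta(\bot,s)$. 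All occurrences of $s$ in $\lambda(s)=\eta(\bot,s)$ remain modalized, so Theorem \ref{thm:Fixed Point Theorem GL} applies, and the final chaining is sound (indeed, since the occurrences being replaced sit outside all boxes, propositional substitution of equivalents suffices; the rule Reg you invoke is available in $\GL$ but is more than is required).

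The genuine gap is in the minimality half, as you yourself flag. Your route goes through the claim that $C$ defines the pointwise least fixed point of $\varphi$ on finite $\GL$-frames; that is a Beki\'{c}-style diagonal identity ($\mu p.\eta(p,p)$ versus $\mu s.\eta(\bot,s)$) and is not immediate, nor is the uniform-definability worry it raises actually resolved in your sketch. A cleaner way to close it uses your own construction: (i) if $\GL\vd\varphi(E)\r E$, then monotonicity of $\eta$ in its first (positive, non-modalized) argument gives $\GL\vd\eta(\bot,E)\r\eta(E,E)$, hence $\GL\vd\lambda(E)\r E$, so every pre-fixed point of $\varphi$ is a pre-fixed point of $\lambda$; (ii) for the positive \emph{modalized} formula $\lambda$, on any finite transitive converse well-founded frame the fixed point is unique (its value at a world is determined by recursion on the order, since $s$ occurs only under boxes), and by Knaster--Tarski the least fixed point of the associated monotone operator is a fixed point, so the set defined by $C$ is that least fixed point and is contained in the set defined by any pre-fixed point $E$; the finite model property of $\GL$ then converts this into $\GL\vd C\r E$. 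With (i) and (ii) supplied, your outline becomes a proof; as written, the minimality claim remains a sketch.
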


Finally, Alberucci and Facchini in \cite{AlberucciFacchini2009} showed that the modal $\mu$-calculus over $\GL$ collapses to $\GL$. They also gave a new proof for the de Jongh-Sambin Fixed Point Theorem in \GL.

\section{Justification logics}\label{sec:Justification logics}
 The language of justification logics is an
extension of the language of propositional logic by the formulas
of the form $t:F$, where $F$ is a formula and $t$ is a
justification term. \textit{Justification terms} (or
\textit{terms} for short) are built up from (justification)
variables $x, y, z, \ldots$ (possibly with subscript) and (justification) constants $a,b,c,\ldots$ (possibly with subscript) using several operations depending on the logic: (binary) application  `$\cdot$', (binary) sum `$+$', (unary) verifier `$!$', (unary) negative verifier `$?$', and (unary) weak negative verifier `$\bar{?}$'.
 Justification formulas  are constructed by the following grammar:
\[ A::= p~|~\bot~|~\neg A~|~A\wedge A~|~A\vee A~|~A\rightarrow A~|~t:A,\]
where $p$ is a propositional variable and $t$ is a justification term.

We now begin with describing the axiom schemes and rules of the basic
justification logic {\sf J}, and continue with other justification
logics. The basic justification logic {\sf J} is the weakest
justification logic we shall be discussing. Other
justification logics are obtained by adding certain axiom schemes
to {\sf J}.

\begin{definition}\label{def: justification logics}
Axioms schemes of {\sf J} are:
\begin{description}
\item[Taut.] All propositional tautologies,
 \item[Sum.]  $s:A\r (s+t):A~,~s:A\r (t+s):A$,
 \item[jK.] $s:(A\r B)\r(t:A\r (s\cdot t):B)$.
\end{description}
Other justification logics are obtained by adding the following axiom schemes to {\sf J} in various combinations:
\begin{description}
\item[jT.]  $t:A\r A$.
\item[jD.]  $t:\perp \r \perp$.
 \item[j4.]  $t:A\r !t:t:A$,
\item[jB.]  $\neg A\r\bar{?} t:\neg t: A$.
 \item[j5.]  $\neg t:A\r ?t:\neg t:A$.
  \end{description}
 All justification logics have the inference rule Modus Ponens, and the \textit{Iterated Axiom Necessitation}  rule:
\begin{description}
\item[IAN.]
  $\vdash c_{i_n}:c_{i_{n-1}}:\ldots:c_{i_1}:A$, where $A$ is an axiom instance of the logic, $c_{i_j}$'s
are arbitrary justification constants and $n\geq 1$.
\end{description}
 \end{definition}

The language of each justification logic includes those operations on terms that are present in its axioms. Moreover, as in the case of modal logic, the name of each justification logic is indicated by the list of its axioms.  For example, ${\sf JT4}$ is the extension of ${\sf J}$ by axioms $jT$ and $j4$, in the language containing term operations $\cdot$, $+$, and $!$. {\sf JT4} is usually called the logic of proofs $\LP$.

\begin{remark}
The rule IAN can be replaced by the following rule, called \textit{Axiom Necessitation}  rule, in those justification logics that contain axiom j4:
\begin{description}
\item [AN.] $\vdash c:A$, where $A$ is
an axiom instance of the logic and $c$ is an arbitrary justification constant.
\end{description}
Artemov used this rule in his formulation of the logic of proofs $\LP$. We will use this rule in the formulation of quantified logic of proofs in Section \ref{sec:QLP}.
\end{remark}

\begin{definition}
\begin{enumerate}
\item Given a justification logic $\JL$, the \textit{total constant specification} $\mathcal{TCS}$ of $\JL$ is the set of \textit{all} formulas of the form $c_{i_n}:c_{i_{n-1}}:\ldots:c_{i_1}:A$, where $n\geq 1$, $A$ is an axiom instance of $\JL$ and $c_{i_j}$'s are arbitrary justification constants.
\item A \textit{constant specification} $\CS$ for $\JL$  is a subset of the total constant specification of $\JL$.
\item  A constant specification $\CS$ is \textit{axiomatically appropriate} if for each axiom instance $A$ of $\JL$ there is a constant $c$ such that $c:A\in\CS$, and if $F\in\CS$ then $c:F\in\CS$ for some constant $c$.
\end{enumerate}
 \end{definition}

Let ${\sf JL}_\CS$ be the fragment of ${\sf JL}$ where the Iterated Axiom Necessitation rule only produces formulas from the given $\CS$. Thus ${\sf JL}_\emptyset$ denotes the fragment of ${\sf JL}$ without the Iterated Axiom Necessitation rule. Note that the total constant specification $\mathcal{TCS}$ is axiomatically appropriate.

The deduction theorem and substitution lemma holds in all justification logics.

\begin{theorem}[Deduction Theorem]
For a set of formulas $S$, we have $\JL_\CS, $ $S, A\vdash B$ if and only if $\JL_\CS, S\vdash A\r B$.
\end{theorem}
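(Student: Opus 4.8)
The plan is to prove the two directions separately, with essentially all of the content lying in the left-to-right implication. The converse, that $\JL_\CS, S\vdash A\r B$ implies $\JL_\CS, S, A\vdash B$, follows immediately by monotonicity and one application of Modus Ponens: from $\JL_\CS, S, A\vdash A\r B$ and $\JL_\CS, S, A\vdash A$ we infer $B$.

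For the forward direction, I would argue by induction on the length of a derivation of $B$ from $\JL_\CS, S, A$, showing at each stage that $\JL_\CS, S\vdash A\r B$. The routine cases are when $B$ is a propositional tautology, an axiom of $\JL_\CS$, or a member of $S$: in each of these $\JL_\CS, S\vdash B$ already holds, and one application of the tautology $B\r(A\r B)$ together with Modus Ponens yields $\JL_\CS, S\vdash A\r B$. The case $B=A$ is handled by the tautology $A\r A$. When $B$ is obtained by Modus Ponens from earlier formulas $C$ and $C\r B$, the induction hypothesis gives $\JL_\CS, S\vdash A\r C$ and $\JL_\CS, S\vdash A\r(C\r B)$, and the propositional tautology $(A\r(C\r B))\r((A\r C)\r(A\r B))$ together with two applications of Modus Ponens delivers $\JL_\CS, S\vdash A\r B$.

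The one case that genuinely requires the special structure of justification logic --- and the step I would flag as the crux --- is when $B$ is produced by the Iterated Axiom Necessitation rule, so that $B$ has the form $c_{i_n}:c_{i_{n-1}}:\ldots:c_{i_1}:C$ for an axiom instance $C$. The essential observation is that IAN is an axiom-generating rule whose applicability depends in no way on the hypotheses $S$ or $A$: it produces exactly the same formulas regardless of what is assumed. Hence $B$ is already derivable outright, $\JL_\CS\vdash B$, so a fortiori $\JL_\CS, S\vdash B$, and the tautology $B\r(A\r B)$ again yields $\JL_\CS, S\vdash A\r B$. This is precisely the point at which the deduction theorem would fail were IAN replaced by the unrestricted modal Necessitation rule, which could be applied to $A$ itself to produce $t{:}A$ without giving $A\r t{:}A$; the deliberate restriction of necessitation to axiom instances in the formulation of $\JL$ is exactly what makes the theorem go through. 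Beyond correctly enumerating these cases, no further subtlety arises, so the remainder of the proof is a straightforward bookkeeping exercise.
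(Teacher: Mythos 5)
Your proof is correct. The paper itself states the Deduction Theorem without proof (treating it as a standard fact of justification logics), and your argument is precisely the canonical one that is intended: induction on the derivation, with the crucial observation that IAN is hypothesis-independent --- it only ever produces formulas of $\CS$, never anything depending on $S$ or $A$ --- which is exactly why the theorem holds unconditionally here, in contrast to modal logic with unrestricted necessitation.
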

\begin{lemma}[Substitution Lemma]\label{lemma: substitution lemma JL}
 If $\JL_\mathcal{TCS},S\vdash A,$ then for every justification variable $x$ and justification term $t$, we
have $\JL_\mathcal{TCS},S[t/x]\vdash A[t/x]$, where $A[t/x]$ is  the result of simultaneously replacing all occurrences of variable $x$ in $A$ by term $t$. The same holds if $\JL_\mathcal{TCS}$ is replaced by $\JL_\emptyset$.
\end{lemma}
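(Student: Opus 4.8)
The plan is to argue by induction on the length of the derivation of $A$ from $\JL_\mathcal{TCS}, S$, showing that applying the substitution $[t/x]$ to every line of the derivation again yields a correct derivation, now from $\JL_\mathcal{TCS}, S[t/x]$. A derivation is a finite sequence whose members are each (i) an axiom instance of $\JL$, (ii) a member of $S$, (iii) a formula of $\mathcal{TCS}$ produced by the rule IAN, or (iv) obtained from two earlier lines by Modus Ponens; it suffices to handle these four cases.

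The one auxiliary fact I would isolate first is that the axiom schemes are closed under the substitution $[t/x]$: if $B$ is an instance of any of Taut, Sum, jK, jT, jD, j4, jB, or j5, then $B[t/x]$ is again an instance of the same scheme. This is immediate because a uniform substitution of the term $t$ for the justification variable $x$ only renames the term and formula parameters of the scheme. For instance, an instance $s:(C\r D)\r(r:C\r(s\cdot r):D)$ of jK is carried to $s[t/x]:(C[t/x]\r D[t/x])\r(r[t/x]:C[t/x]\r(s[t/x]\cdot r[t/x]):D[t/x])$, which is again an instance of jK; the remaining schemes are checked the same way.

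With this in hand the inductive step is routine. If the current line is an axiom instance, then by the auxiliary fact its image under $[t/x]$ is again an axiom, hence derivable. If it lies in $S$, its image lies in $S[t/x]$ by definition. If it is a line $c_{i_n}:\cdots:c_{i_1}:B$ of $\mathcal{TCS}$, then since $x$ is a justification variable and the $c_{i_j}$ are constants, the substitution leaves the constants untouched and acts only on $B$, producing $c_{i_n}:\cdots:c_{i_1}:(B[t/x])$; as $B[t/x]$ is again an axiom instance, this formula belongs to $\mathcal{TCS}$ and is therefore derivable. Finally, if the line is obtained by Modus Ponens from $C$ and $C\r A$, then because $[t/x]$ distributes over implication, i.e. $(C\r A)[t/x]=C[t/x]\r A[t/x]$, the induction hypothesis supplies derivations of $C[t/x]$ and of $C[t/x]\r A[t/x]$, and a single application of Modus Ponens gives $A[t/x]$. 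For the $\JL_\emptyset$ version the same argument applies verbatim with case (iii) simply deleted.

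The proof is almost entirely mechanical; the only points needing attention are the closure of the axiom schemes under substitution and the observation that justification constants, being syntactically distinct from variables, are fixed by $[t/x]$ --- this is precisely what keeps IAN-produced lines inside $\mathcal{TCS}$. Because justification variables are never bound in propositional justification logic, there are no variable-capture phenomena to worry about, so the argument is simpler than the corresponding substitution lemma in first-order logic.
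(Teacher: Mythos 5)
Your proof is correct. Note that the paper itself states this lemma without any proof (it is invoked as a standard fact of justification logics), so there is no argument of the author's to compare yours against; your induction on the length of the derivation is exactly the standard argument one would expect here. Most importantly, you isolate the one non-mechanical point correctly: closure of the axiom schemes under substitution of terms for justification variables is what keeps IAN-produced lines inside $\mathcal{TCS}$, and this is precisely why the lemma is restricted to $\mathcal{TCS}$ and $\emptyset$ rather than stated for an arbitrary constant specification $\CS$ --- for general $\CS$, membership of $c_{i_n}:\cdots:c_{i_1}:B$ in $\CS$ gives no guarantee that $c_{i_n}:\cdots:c_{i_1}:B[t/x]$ lies in $\CS$, and the argument would break at exactly that case. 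Your observations that constants are syntactically distinct from variables (hence fixed by the substitution) and that there is no variable binding in the propositional language (hence no capture) are the only other points that need saying, and you say both.
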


The following lemma was first proved by Artemov in \cite{A2001}.
\begin{lemma}[Lifting Lemma]\label{lem:lifting lemma}
Given an axiomatically appropriate constant specification $\CS$ for \JL, if $$\JL_\CS, A_1,\ldots,A_n\vd F,$$ then for some  justification term $t(x_1,\ldots,x_n)$ and justification variables $x_1,\ldots,x_n$
$$\JL_\CS,x_1:A_1,\ldots,x_n:A_n\vd  t(x_1,\ldots,x_n):F.$$
\end{lemma}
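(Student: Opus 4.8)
The plan is to induct on the length of the $\JL_\CS$-derivation witnessing $\JL_\CS, A_1,\ldots,A_n\vd F$, building the term $t$ in step with the derivation. I fix once and for all distinct justification variables $x_1,\ldots,x_n$, write $\g$ for the enlarged hypothesis list $x_1:A_1,\ldots,x_n:A_n$, and aim at each stage to exhibit a term $t(\vec{x})$ with $\JL_\CS,\g\vd t:F$.

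Base cases: a line of the derivation that does not depend on earlier lines arises in one of three ways. (i) $F$ is one of the hypotheses $A_i$: then $x_i:A_i$ already occurs in $\g$, so $t:=x_i$ works. (ii) $F$ is an axiom instance of $\JL$: since $\CS$ is axiomatically appropriate there is a constant $c$ with $c:F\in\CS$, whence $\JL_\CS\vd c:F$ by IAN, and $t:=c$ works. (iii) $F$ is introduced by the rule IAN, so $F\in\CS$; invoking the closure clause of axiomatic appropriateness there is a constant $c$ with $c:F\in\CS$, and again $\JL_\CS\vd c:F$, so $t:=c$ works. In each base case the witnessing term is a bare variable or a constant.

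Inductive step: the only rule that combines earlier lines is Modus Ponens, say $F$ follows from $G\r F$ and $G$. By the induction hypothesis there are terms $s(\vec{x})$ and $u(\vec{x})$ with $\JL_\CS,\g\vd s:(G\r F)$ and $\JL_\CS,\g\vd u:G$. The axiom jK gives $s:(G\r F)\r(u:G\r(s\cdot u):F)$, so two applications of Modus Ponens yield $\JL_\CS,\g\vd(s\cdot u):F$, and I set $t:=s\cdot u$. This is the only place a compound term is built, and it is exactly where the application operator and the axiom jK enter; since jK and IAN belong to every logic of Definition \ref{def: justification logics}, the construction is uniform across all of them.

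The step to watch is the bookkeeping of the constant specification. The hypothesis that $\CS$ is axiomatically appropriate is used precisely in base cases (ii) and (iii): it guarantees that every axiom instance, and every formula already placed in $\CS$ by IAN, admits a justifying constant inside $\CS$. Without this assumption the induction would stall at a base case, since there could be an axiom (or a $\CS$-member) with no available witness $t$. No further subtlety arises: the term produced visibly contains only the variables $x_1,\ldots,x_n$ attached to the hypotheses, as required.
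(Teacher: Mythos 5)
Your proof is correct and follows essentially the same route as the paper's: induction on the derivation, with the hypothesis case handled by $x_i$, the axiom and IAN cases handled by constants supplied by axiomatic appropriateness (its base clause and its closure clause, respectively), and Modus Ponens handled via jK and the application operator. The only cosmetic difference is that you classify the IAN case as a base case (which is the cleaner organization, since IAN has no premises), whereas the paper lists it under the induction step; you are also slightly more explicit than the paper in noting that the axiom case needs axiomatic appropriateness to guarantee $c:F\in\CS$ before IAN can be applied in $\JL_\CS$.
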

\begin{proof}
The proof is by induction on the derivation of
$F$. We have three base cases:
\begin{itemize}
 \item If $F$ is an axiom, then put $t:= c$, for a justification constant $c$, and use rule IAN to obtain $c:F$.
  \item If $F=A_i$, then put $t:=x_i$.
\end{itemize}
For the induction step we have two cases.
\begin{itemize}
 \item Suppose $F$ is obtained by Modus Ponens from $G\r F$ and $G$. By induction hypothesis, there are terms $u(x_1,\ldots,x_n)$
 and $v(x_1,\ldots,x_n)$ such that $u:(G\r F)$ and $v:G$ are derivable from $x_1:A_1,\ldots,x_n:A_n$. Then put $t:=u.v$ and use the axiom jK to obtain $u\cdot v:F$.
 \item Suppose $F$ is obtained from Iterated Axiom Necessitation rule, so $F=c_{i_n}:c_{i_{n-1}}:\ldots:c_{i_1}:B\in\CS$, for some axiom instance $B$. Then since $\CS$ is axiomatically appropriate, there is a justification constant $c$ such that $c:c_{i_n}:c_{i_{n-1}}:\ldots:c_{i_1}:B\in\CS$. Thus, put $t:=c$.\qed
\end{itemize}
\end{proof}

One of the important properties of justification logics is the \textit{internalization property}.

\begin{lemma}[Internalization Lemma]\label{lemma:Internalization Lemma}
Given an axiomatically appropriate constant specification $\CS$ for \JL, if ${\sf JL}_\CS \vdash F$, then there is a justification term
$t$ such that ${\sf JL}_\CS \vdash t:F$.
\end{lemma}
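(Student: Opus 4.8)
The plan is to recognize this as the special case of the Lifting Lemma (Lemma \ref{lem:lifting lemma}) in which there are no hypotheses, i.e. $n=0$. Taking the list $A_1,\ldots,A_n$ to be empty, the Lifting Lemma directly yields a (closed) justification term $t$ with $\JL_\CS\vd t:F$. So the entire statement follows immediately from the preceding lemma, and the only thing to check is that the $n=0$ case is genuinely covered by that proof.

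Should one prefer a self-contained argument, I would instead run the same induction on the derivation of $F$ in $\JL_\CS$, now omitting the clause that handled a hypothesis $F=A_i$. There are two base cases and one inductive case. If $F$ is an axiom instance, then because $\CS$ is axiomatically appropriate there is a constant $c$ with $c:F\in\CS$, so the rule IAN produces $c:F$ and I set $t:=c$. If $F$ was itself obtained by IAN, so that $F=c_{i_n}:\cdots:c_{i_1}:B\in\CS$, then axiomatic appropriateness again supplies a constant $c$ with $c:F\in\CS$, and I again set $t:=c$.

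For the inductive (Modus Ponens) case, $F$ comes from $G\r F$ and $G$; by the induction hypothesis there are terms $u,v$ with $\JL_\CS\vd u:(G\r F)$ and $\JL_\CS\vd v:G$. Then the axiom jK, namely $s:(A\r B)\r(t:A\r(s\cdot t):B)$, together with two applications of Modus Ponens gives $\JL_\CS\vd (u\cdot v):F$, so I set $t:=u\cdot v$.

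I do not expect a genuine obstacle here, since the statement is essentially a corollary of the Lifting Lemma. The one point that must not be overlooked is the hypothesis that $\CS$ is axiomatically appropriate: it is exactly what guarantees the constants needed in the two base cases, both for axiom instances and for formulas already lying in $\CS$. Without that hypothesis the base cases could fail and internalization would not go through.
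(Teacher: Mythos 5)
Your proposal is correct and matches the paper exactly: the paper's entire proof is that the statement is the special case of the Lifting Lemma (Lemma~\ref{lem:lifting lemma}) with an empty list of hypotheses, which is precisely your primary plan. Your unfolded self-contained induction is also sound (and correctly identifies where axiomatic appropriateness is needed, namely in both the axiom case and the case of a formula already produced by IAN), but it is just the Lifting Lemma's own proof restricted to $n=0$.
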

\begin{proof}
Special case of Lemma \ref{lem:lifting lemma}.\qed
\end{proof}

The following lemma is helpful. in the next section.
\begin{lemma}\label{lem: axiom D in JD}
${\sf JD}_\mathcal{TCS}$ proves $s:\neg A \r \neg t:A$, for every ${\sf JD}$-formula $A$ and terms $s,t$.
\end{lemma}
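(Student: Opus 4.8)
The plan is to reduce the claim to a single application of the axiom jD, after first manufacturing a justification of $A\r\perp$ out of the given justification of $\neg A$. Note that $s:\neg A \r \neg t:A$ is propositionally equivalent to $s:\neg A \w t:A \r \perp$, so it suffices to derive $\perp$ from the two hypotheses $s:\neg A$ and $t:A$ (one may invoke the Deduction Theorem, or simply chain implications). The natural route is to feed $t:A$ through a justified implication of the shape $(\cdot):(A\r\perp)$ by means of jK, obtaining a term that justifies $\perp$, and then to collapse that term to $\perp$ using jD.

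The one subtlety — and the point at which the introduction's emphasis on the failure of JReg becomes relevant — is that $\neg A$ is a primitive connective in the grammar, not literally the formula $A\r\perp$. Rewriting $s:\neg A$ as $s:(A\r\perp)$ would be exactly an instance of the invalid rule JReg, so I cannot perform that substitution inside the term. Instead I would internalize the connecting tautology $\neg A \r (A\r\perp)$: by Taut it is an axiom instance, and since $\mathcal{TCS}$ is the total (hence axiomatically appropriate) constant specification, rule IAN yields a constant $c$ with $c:(\neg A \r (A\r\perp))$ provable in ${\sf JD}_\mathcal{TCS}$. Applying jK to this formula and then Modus Ponens gives $s:\neg A \r (c\cdot s):(A\r\perp)$, which converts the hypothesized justification of $\neg A$ into a justification of $A\r\perp$ carried by the application term $c\cdot s$.

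From here the argument is mechanical. A second instance of jK gives $(c\cdot s):(A\r\perp) \r (t:A \r ((c\cdot s)\cdot t):\perp)$, and jD gives $((c\cdot s)\cdot t):\perp \r \perp$. Chaining these implications together with the one from the previous paragraph, by propositional reasoning, produces $s:\neg A \r (t:A \r \perp)$, that is, $s:\neg A \r \neg t:A$, as required. I expect the only genuine obstacle to be the $\neg A$-versus-$A\r\perp$ mismatch just described; once it is dispatched by internalizing $\neg A \r (A\r\perp)$, every remaining step is a direct use of jK, jD, IAN, or propositional logic.
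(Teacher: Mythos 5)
Your proof is correct and follows essentially the same route as the paper's: internalize the tautology $\neg A \r (A\r\bot)$ with IAN to get $c:(\neg A \r (A\r\bot))$, use jK twice to pass from $s:\neg A$ to $(c\cdot s):(A\r\bot)$ and then to $((c\cdot s)\cdot t):\bot$, and finish with jD and propositional reasoning. Your observation about avoiding the invalid rule JReg when handling the $\neg A$ versus $A\r\bot$ mismatch is exactly the point the paper's first three lines address.
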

\begin{proof}
\begin{enumerate}
\item $c:(\neg A \r (A\r \bot))$, by IAN
\item $c:(\neg A \r (A\r \bot))\r (s:\neg A \r c \cdot s:(A\r \bot))$, an instance of jK
\item $s:\neg A \r c \cdot s:(A\r \bot)$, from 1, 2 by MP
\item $c \cdot s:(A\r \bot) \r (t:A \r (c\cdot s) \cdot t:\bot)$, an instance of jK
\item $(c\cdot s) \cdot t:\bot \r \bot$, an instance of jD
\item $c \cdot s:(A\r \bot) \r (t:A \r \bot)$, from 4, 5 by propositional reasoning
\item $s:\neg A \r \neg t:A$, from 3, 6 by propositional reasoning.\qed
\end{enumerate}
\end{proof}

From the above lemma we obtain $t:A \r \neg s: \neg A$ in ${\sf JD}$ which is an analog of modal axiom  D, $\b A\r\Diamond A$.

In the sequel, we will state the precise connection between modal and justification logics. For comparison, axioms and rules of \LP~and \s~are given in Table 1.

\begin{table}[ht]
\centering
\renewcommand{\arraystretch}{1.5}
\begin{tabular}{|c|c|}
\hline
Modal logic \s & Logic of proofs \LP \\
\hline\hline
 $~\b(A\r B)\r(\b A\r\b B)~$ &~ $s:(A\r B)\r (t:A\r s\cdot t:B)~$ \\
 $\b A\r A$ & $t:A\r A$\\
 $\b A\r \b\b A$ & $t:A\r !t:t:A$ \\\hline
 &$s:A\vee t:A\r (s+t):A$\\ \hline\hline
\AXC{}\noLine
 \UIC{$\vd A$}\RightLabel{$(Nec)$}
 \UIC{$\vd\b A$}
 \noLine\UIC{}\DP
  &
  \AXC{}\noLine
  \UIC{\textrm{A is an axiom instance}}\RightLabel{$(AN)$}
  \UIC{$\vd c: A$}\noLine
  \UIC{}\DP\\  \hline
 \end{tabular}
 \vspace{0.2cm}
\caption{The correspondence between \s~and \LP.}
\end{table}

\begin{definition}\label{def: forgetful projection}
The forgetful projection $\circ$ is a mapping from the set of
justification formulas into the set of modal formulas, defined
recursively as follows: $p^\circ :=p$, $(\bot)^\circ := \bot$, $\circ$ commutes with propositional connectives, and $(t:A)^\circ := \Box A^\circ$.
For a set of justification formulas $S$, let
$S^\circ=\{F^\circ~|~F\in S\}$.
\end{definition}
 \begin{theorem} [Realization Theorem,  \cite{A2001,A2008,Brezhnev2000,GoetschiKuznets2012,Rubtsova2006}]\label{thm: realization of S4-LP}
$\JL^\circ={\sf ML}$.
\end{theorem}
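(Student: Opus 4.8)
The plan is to prove the two inclusions $\JL^\circ \subseteq {\sf ML}$ and ${\sf ML} \subseteq \JL^\circ$ separately; the first is soundness of the forgetful projection and is routine, while the second is the genuine \emph{realization} and carries all the difficulty. For soundness I would argue by induction on a $\JL$-derivation that $\JL \vd F$ implies ${\sf ML} \vd F^\circ$, for which it suffices to check that the $\circ$-image of every axiom is an ${\sf ML}$-theorem and that the rules are preserved. Here $jK$ projects to the normality axiom $\b(A^\circ \r B^\circ)\r(\b A^\circ \r \b B^\circ)$, $Sum$ projects to an instance of $\b A^\circ \r \b A^\circ$, and each of $jT$, $jD$, $j4$, $jB$, $j5$ projects to exactly the matching modal axiom $T$, $D$, $4$, $B$, $5$, since $(t:A)^\circ = \b A^\circ$ by definition. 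Modus Ponens is preserved trivially, and the Iterated Axiom Necessitation rule projects to $n$-fold applications of $Nec$ to the (now provable) $\circ$-image of an axiom. As each justification logic in the list contains exactly the axioms matching its modal counterpart, this yields $\JL^\circ \subseteq {\sf ML}$ in all cases.

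For the realization direction I would follow Artemov's method through a cut-free Gentzen calculus for ${\sf ML}$. Given ${\sf ML}\vd A$, fix a cut-free derivation $\pi$ of the sequent $\R A$, and assign a \emph{polarity} to each occurrence of $\b$ in $\pi$: negative occurrences are those that enter as assumptions, positive occurrences are those created by the modal ($\b$-right) rule. The idea is to replace every negative box-occurrence by a fresh justification variable and to \emph{construct} a term for each positive occurrence, processing $\pi$ from its leaves toward the root while maintaining a realization of every sequent. The Lifting Lemma (Lemma~\ref{lem:lifting lemma}) and Internalization (Lemma~\ref{lemma:Internalization Lemma}) supply the terms required at the necessitation steps, application $\cdot$ is supplied by $jK$, and for the reflexive, transitive, and symmetric variants the operators $!$, $?$, $\bar{?}$ realize $j4$, $j5$, $jB$. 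When the root sequent $\R A$ is reached its realization is a $\JL$-provable formula $F$ with $F^\circ = A$, giving ${\sf ML}\subseteq\JL^\circ$.

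The main obstacle is the modal introduction step, where the several terms realizing the premises of a $\b$-right inference must be merged into one term realizing the newly created positive box. Because $JReg$ fails one cannot substitute provisional variables by equivalent terms after the fact; instead the realization must be built so that a single proof polynomial simultaneously justifies every instance of a given positive occurrence throughout the proof. This forces the use of the $+$ operation to accumulate the finitely many terms that realize one positive family across different branches, together with careful bookkeeping of which occurrences the proof links. It is precisely here that the symmetric and euclidean logics (those with $B$ or $5$) demand the extra operators $\bar{?}$ and $?$ and the more delicate case analyses of \cite{Brezhnev2000,GoetschiKuznets2012,Rubtsova2006}, so I expect this merging to be the technical heart of the argument.
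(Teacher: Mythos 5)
You should first note that the paper contains no proof of this theorem at all: it is imported as a known result, with the proof delegated to the cited literature (Artemov \cite{A2001,A2008} for \s/\LP, Brezhnev \cite{Brezhnev2000} for several other logics, Goetschi--Kuznets \cite{GoetschiKuznets2012} and Rubtsova \cite{Rubtsova2006} for the rest). So your attempt can only be measured against those sources. Your soundness direction ($\JL^\circ\subseteq{\sf ML}$) is correct and is exactly the routine induction used there; the only nit is that jD, which in this paper is $t:\bot\r\bot$, projects to $\b\bot\r\bot$, which is \emph{equivalent} to axiom D over {\sf K} rather than literally being it (the paper records this equivalence), so the induction still goes through. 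Your realization direction also correctly reconstructs Artemov's method: polarities on box occurrences, proof variables for negative occurrences, Internalization/Lifting at the $\b$-right steps, and $+$ to merge the terms of one positive family across branches, with the observation that the failure of $JReg$ is what forces this bookkeeping. That is faithful to \cite{A2001,Brezhnev2000} and suffices for {\sf K}, {\sf T}, {\sf D}, {\sf K4}, {\sf D4}, {\sf S4}.

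The genuine gap is the blanket claim that the whole theorem can be proved ``through a cut-free Gentzen calculus for ${\sf ML}$.'' The theorem ranges over all fifteen modal logics listed in the paper, and nine of them --- all those containing axiom B or 5 ({\sf KB}, {\sf K5}, {\sf KB5}, {\sf K45}, {\sf D5}, {\sf DB}, {\sf D45}, {\sf TB}, {\sf S5}) --- have no known ordinary cut-free sequent calculus, so the induction over a cut-free derivation that your plan rests on cannot even start for them. You gesture at these cases at the end, but you locate the difficulty in the merging step (``more delicate case analyses''), which misidentifies the problem: it is the proof-theoretic foundation itself that is missing, not the combinatorics of merging. The cited literature changes method precisely there: Goetschi--Kuznets replace ordinary sequents by cut-free \emph{nested} sequent systems and obtain realization modularly by embedding, while Rubtsova's treatment of {\sf S5} is semantic (via Fitting-style models) rather than proof-theoretic. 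As written, your argument establishes the theorem only for the Artemov--Brezhnev fragment, not in the full generality in which the paper states it.
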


If $\JL^\circ={\sf ML}$, then $\JL$ is called the justification counterpart of ${\sf ML}$. The justification counterpart of G\"{o}del-L\"{o}b provability logic  {\sf EGL} is introduced in \cite{Ghari2012-Thesis}. {\sf EGL} is an extension of {\sf J4} by the explicit L\"{o}b axiom schema:\footnote{Another explicit version of L\"{o}b axiom is considered in \cite{Ghari2011-AIMC,Ghari2012-Thesis} of the form $s:(t: A\r A)\r lob(s,t): A$, in the extended language of {\sf J4} by binary term operator $lob(.,.)$.}
$$s:(t: A\r A)\r t: A.$$
It is proved that ${\sf EGL}^\circ =\GL$. This, together with Solovay's arithmetical completeness of $\GL$ (\cite{Solovay1976}), implies the arithmetical provability completeness of {\sf EGL},
\[{\sf EGL} \hookrightarrow \GL \hookrightarrow \PA, \]
in which $t:A$ is interpreted as ``$A$ is provable in $\PA$."

\section{Fixed points in justification logics}\label{sec:Fixed points in justification logics}
In this section we study the effect of adding fixed points to justification logics. Some justification logics inherits the fixed point property from $\GL$. For example,  the \textit{logic of proofs and provability} {\sf GLA} (see \cite{ArtemovNogina2004,Nogina2006}) is such a logic. {\sf GLA} has axioms and rules of $\GL$ and $\LP$ (in their joint language), together with axioms $t:A\r \b A$,
 $\neg t:A\r\b\neg t:A$, $t:\b A\r A$, and the reflection rule: from $\vdash \b A$, infer $\vdash A$. It is obvious that every formula $A(p,\bar{q})$ in the language of modal logic in which $p$ is modalized has a fixed point in {\sf GLA}.

It is worth noting that a fixed point theorem for two operation-free logics of proofs was given by Stra{\ss}en in \cite{Strassen1994}. The systems considered there only use variables as terms and have no term operations (such as $\cdot$ and $+$) and hence are not of interest for current paper.
\subsection{Justification logics with fixed point operators}\label{sec:JL(FP)}
Suppose {\sf JL} is a propositional justification logic defined over a language ${\cal L}$ for which the internalization and substitution lemma could be proved. An occurrence of a propositional variable $p$ is called \textit{justified} in the formula $A(p,q_1,\ldots,q_n)$ if $p$ occurs in the scope of a justification operator $:$. Let ${\cal L}({\sf FP})$ be the extension of ${\cal L}$ by n-ary \textit{fixed point operators} $\delta_A(q_1,\ldots,q_n)$ for each ${\cal L}$-formula $A(p,q_1,\ldots,q_n)$ in which $p$ is justified. The \textit{fixed point extension} of justification logic ${\sf JL}$, denoted ${\sf JL(FP)}$, in the language ${\cal L}({\sf FP})$ is an extension of ${\sf JL}$ by \textit{fixed point axiom} schemes
\[ \delta_A(B_1,\ldots,B_n) \leftrightarrow A(\delta_A(B_1,\ldots,B_n),B_1,\ldots,B
_n),\]
where $B_1,\ldots,B_n$ are ${\cal L}({\sf FP})$-formulas.

The definitions of constant specification and ${\sf JL(FP)}_\CS$ are similar to those of $\JL$. It is easy to verify that the deduction theorem holds in ${\sf JL(FP)}_\CS$, for arbitrary $\CS$, the substitution lemma holds in ${\sf JL(FP)}_\mathcal{TCS}$ and ${\sf JL(FP)}_\emptyset$, and the internalization lemma holds in ${\sf JL(FP)}_\CS$, for axiomatically appropriate $\CS$.

Next the analogs of the Knower and the Believer Paradoxes are formulated in the framework of justification logics.

\begin{theorem}\label{thm: Knower Paradox-justification case}
Let {\sf JL} be a propositional justification logic which contain the axiom scheme
\begin{description}
  \item[jT.] $t:A\r A$,
  \end{description}
then ${\sf JL(FP)}_\mathcal{TCS}$ is inconsistent.
\end{theorem}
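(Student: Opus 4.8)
The plan is to mirror the proof of Montague's Knower Paradox (Theorem~\ref{thm:Knower, simple case}) and its modal counterpart (Theorem~\ref{Thm Knower Paradox-modal case}), reading the justification assertion $x:A$ in place of the knowledge predicate $K(\gn{A})$ and using the justification version of the Knower fixed point. Concretely, I would fix a justification variable $x$ and consider the $\cal L$-formula $A(p):=\neg\, x:p$. Since the displayed occurrence of $p$ lies in the scope of the justification operator $:$, it is justified, so the fixed point operator $\delta_A$ is available in ${\sf JL(FP)}$. Writing $D:=\delta_A$, the fixed point axiom gives
\[ D \leftrightarrow \neg\, x:D, \]
the justification reading of ``nobody knows this statement to be true.''

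From here I would run exactly the propositional skeleton of Theorem~\ref{thm:Knower, simple case}. Axiom jT yields $x:D\r D$; combining this with the left-to-right direction of the fixed point axiom, $D\r\neg\, x:D$, gives $x:D\r\neg\, x:D$, hence $\neg\, x:D$, and then the right-to-left direction yields $D$. Thus both ${\sf JL(FP)}_\mathcal{TCS}\vd \neg\, x:D$ and ${\sf JL(FP)}_\mathcal{TCS}\vd D$.

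The step where the justification argument must depart from the modal and arithmetic ones — and the main obstacle — is the conclusion. In Theorem~\ref{thm:Knower, simple case} one applies the rule Nec to $\vd D$ to obtain the matching assertion $K(\gn{D})$, immediately contradicting $\neg K(\gn{D})$; here there is no rule producing $x:D$ for that specific variable $x$. The internalization lemma (Lemma~\ref{lemma:Internalization Lemma}, available in ${\sf JL(FP)}_\mathcal{TCS}$ since $\mathcal{TCS}$ is axiomatically appropriate) applied to $\vd D$ only yields $\vd t:D$ for \emph{some} term $t$, which a priori need not be $x$, so by itself it does not contradict $\neg\, x:D$. This is precisely the failure of JReg that motivates the paper, resurfacing in the proof. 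The resolution is that $\neg\, x:D$ was derived with $x$ a \emph{free} justification variable, so the Substitution Lemma (Lemma~\ref{lemma: substitution lemma JL}, which holds in ${\sf JL(FP)}_\mathcal{TCS}$) lets me specialise $x$ to the very witness $t$ returned by internalization, giving $\vd \neg\, t:D$. Together with $\vd t:D$ this yields $\bot$. This also explains why the statement is phrased for the total constant specification: both internalization and the substitution lemma must be available at once.

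The one bookkeeping point I would verify carefully is that specialising $x$ to $t$ in the theorem $\neg\, x:D$ really produces $\neg\, t:D$ with the \emph{same} $D$, i.e. that the free variable $x$ of the fixed point is only the displayed occurrence and that the substitution does not disturb the index of $\delta_A$. Granting this, the derivation is a faithful transcription of the Montague proof, with internalization replacing Nec and the substitution lemma bridging the gap between the existentially-quantified witness $t$ and the fixed-point variable $x$.
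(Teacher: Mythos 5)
Your proposal is correct and is essentially the paper's own proof: the same fixed point axiom for $A(p)=\neg x{:}p$, the same use of jT and propositional reasoning to obtain $\neg x{:}\delta$ and $\delta$, then internalization to get $t{:}\delta$ and the substitution lemma to turn the refutation in the variable $x$ into one for the witness $t$ (the paper substitutes into $x{:}\delta \rightarrow \neg\delta$ where you substitute into $\neg x{:}\delta$, an immaterial variant). The bookkeeping point you flag --- that $[t/x]$ must not disturb the index of $\delta_A$, whose defining formula contains the free variable $x$ --- is a genuine subtlety, but the paper's proof makes exactly the same silent assumption, so your attempt is, if anything, slightly more careful than the original.
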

\begin{proof}
 In the following we derive a contradiction in ${\sf JL(FP)}$ using the fixed point axiom for the formula $A(p)=\neg x:p$.
\begin{enumerate}
  \item $\delta \leftrightarrow \neg x: \delta$, by fixed point axiom where $\delta=\delta_A$
  \item $x:\delta\r \neg \delta$, from 1 by propositional reasoning
  \item $x:\delta\r \delta$, an instance of jT
  \item $\neg x:\delta$, from 2, 3 by propositional reasoning
  \item $\delta$, from 1, 4 by propositional reasoning
  \item $t:\delta$, from 5 by the internalization lemma
  \item $t:\delta\r \neg \delta$, from 2 by the substitution lemma
  \item $\neg \delta$, from 6, 7 by MP
  \item $\bot$, from 5, 8.\qed
\end{enumerate}
\end{proof}

\begin{theorem}
Let {\sf JL} be a propositional modal logic which contain the axiom schemes
\begin{description}
  \item[jD.] $\neg t:\bot$,
  \item[j4.] $t:A\r !t:t:A$,
  \end{description}
then ${\sf JL(FP)}_\mathcal{TCS}$ is inconsistent.
\end{theorem}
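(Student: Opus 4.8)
The plan is to mirror the proof of the justification Knower Paradox (Theorem~\ref{thm: Knower Paradox-justification case}) almost verbatim, using the same fixed point and the same final contradiction, but replacing the single appeal to $jT$ by a combined appeal to $jD$ and $j4$. Concretely, I would again invoke the fixed point axiom for $A(p)=\neg x:p$, where $x$ is a justification variable, obtaining a formula $\delta:=\delta_A$ with
\[\delta \leftrightarrow \neg x:\delta,\]
from which propositional reasoning gives $x:\delta\r\neg\delta$. In the Knower case this formula together with $jT$ at once yields $\neg x:\delta$; here $jT$ is absent, so the heart of the argument is to derive $\neg x:\delta$ from $x:\delta\r\neg\delta$ using only $jD$, $j4$, $jK$, and internalization. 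This is precisely the justification transcription of the modal derivation of $\neg\b\delta$ from $\b\delta\r\neg\delta$ via the Necessitation rule together with axioms $K$, $4$, and $D$.

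For that central derivation I would first internalize $x:\delta\r\neg\delta$ (Lemma~\ref{lemma:Internalization Lemma}, applicable since $\mathcal{TCS}$ is axiomatically appropriate) to obtain a term $u$ with $\vd u:(x:\delta\r\neg\delta)$. An instance of $jK$ then gives ${!x}:x:\delta\r (u\cdot{!x}):\neg\delta$, and chaining this after the $j4$ instance $x:\delta\r{!x}:x:\delta$ yields
\[x:\delta\r (u\cdot{!x}):\neg\delta.\]
On the other hand, the justification analog of axiom $D$, namely $t:B\r\neg s:\neg B$, which is available from Lemma~\ref{lem: axiom D in JD} because {\sf JL} contains $jD$, instantiates (with $B=\delta$, $t=x$, $s=u\cdot{!x}$) to
\[x:\delta\r\neg (u\cdot{!x}):\neg\delta.\]
Conjoining the two displayed implications forces $x:\delta\r\bot$, that is, $\neg x:\delta$.

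From here the endgame is identical to the Knower proof: the fixed point equivalence together with $\neg x:\delta$ gives $\delta$; internalization gives $t:\delta$ for some term $t$; the Substitution Lemma (Lemma~\ref{lemma: substitution lemma JL}) applied to $x:\delta\r\neg\delta$ with $x:=t$ gives $t:\delta\r\neg\delta$, hence $\neg\delta$, contradicting $\delta$, so ${\sf JL(FP)}_\mathcal{TCS}$ is inconsistent.

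The main obstacle is the central derivation of $\neg x:\delta$: unlike the modal case, where the Necessitation rule together with $K$ and $4$ pushes $\b(\b\delta\r\neg\delta)$ down to $\b\delta\r\b\neg\delta$ with no bookkeeping, in the justification setting I must name the witnessing terms explicitly and arrange the applications of $jK$ and $j4$ so that the term $u\cdot{!x}$ produced by the ``positive'' chain exactly matches the term fed into the ``$D$'' step. A secondary point, already present in the Knower proof, is that $\delta_A$ must be treated as not containing the free justification variable $x$, so that the final substitution $x:=t$ does not disturb occurrences of $x$ hidden inside the fixed point formula.
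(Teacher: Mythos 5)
Your proposal is correct and takes essentially the same route as the paper's proof: the same fixed point $\delta \leftrightarrow \neg x:\delta$, the same ingredients (internalization, jK, j4, and Lemma~\ref{lem: axiom D in JD}) to obtain $\neg x:\delta$, and the identical endgame via internalization of $\delta$ and the substitution $x:=t$. The only divergence is a harmless rearrangement of the middle derivation --- you internalize $x:\delta\r\neg\delta$ and use j4 to supply the antecedent for jK, whereas the paper internalizes $\delta\r\neg x:\delta$, applies jK to $x:\delta$ directly, and uses the j4 instance $x:\delta\r\, !x:x:\delta$ to contradict the resulting $t\cdot x:\neg x:\delta$ via the same D-lemma --- and both versions share (and you rightly flag) the same bookkeeping subtlety about the final substitution acting on the index of $\delta_A$.
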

\begin{proof}
Consider the fixed point axiom for the formula $A(p)=\neg x:p$.
\begin{enumerate}
  \item $\delta \leftrightarrow \neg x: \delta$, by fixed point axiom where $\delta=\delta_A$
  \item $\delta\r \neg x:\delta$, from 1 by propositional reasoning
  \item $t:(\delta\r \neg x:\delta)$, from 2 by the internalization lemma
  \item $t:(\delta\r \neg x:\delta) \r (x:\delta\r t\cdot x:\neg x:\delta)$, an instance of jK
  \item $x:\delta\r t\cdot x:\neg x:\delta$, from 3, 4 by MP
  \item $t\cdot x:\neg x:\delta\r \neg !x:x:\delta$, by lemma \ref{lem: axiom D in JD}
  \item $x:\delta\r \neg !x:x:\delta$, from 5, 6 by propositional reasoning
  \item $x:\delta\r !x:x:\delta$, an instance of j4
  \item $\neg x:\delta$, from 7, 8 by propositional reasoning
  \item $\delta$, from 1, 9 by MP
  \item $t:\delta$, from 10 by the internalization lemma
  \item $t:\delta\r \neg \delta$, from 1 by the substitution lemma
  \item $\neg \delta$, from 11, 12 by MP
  \item $\bot$, from 10, 13.\qed
\end{enumerate}
\end{proof}

For example the logics ${\sf JT(FP)}_\mathcal{TCS}$, ${\sf LP(FP)}_\mathcal{TCS}$,  and ${\sf JD4(FP)}_\mathcal{TCS}$ are inconsistent.
In the following, we will show that ${\sf J4(FP)}$ is consistent. First we extend the definition of forgetful projection to $\mathcal{L}({\sf FP})$.

\begin{definition}\label{def: forgetful projection for diagonal extension}
The forgetful projection $\circ$ of Definition \ref{def: forgetful projection} is extended to the language with fixed point operators as follows: $$(\delta_A(B_1,\ldots,B_n))^\circ = \delta_{A^\circ}(B_1^\circ,\ldots,B_n^\circ).$$
\end{definition}

\begin{lemma}
Given a constant specification $\CS$ for ${\sf J4(FP)}$ and a formula $F$ in the language of ${\sf J4(FP)}$, if ${\sf J4(FP)}_\CS\vdash F$, then ${\sf K4(FP)}\vdash F^\circ$.
\end{lemma}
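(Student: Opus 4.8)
The plan is to argue by induction on the length of the derivation of $F$ in ${\sf J4(FP)}_\CS$, showing that each axiom instance projects to a ${\sf K4(FP)}$-theorem and that the two rules, Modus Ponens and IAN, are preserved under $\circ$. Since $\CS\subseteq\mathcal{TCS}$, every formula introduced by IAN has the form $c_{i_n}:\cdots:c_{i_1}:B$ for an axiom instance $B$, so I only need to treat the axioms of ${\sf J4(FP)}$ together with these two rules.

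First I would handle the axiom cases. A propositional tautology projects to a propositional tautology, hence to a ${\sf K4(FP)}$-theorem. The Sum axioms $s:A\r(s+t):A$ and $s:A\r(t+s):A$ both project to $\b A^\circ\r\b A^\circ$, a tautology. The axiom jK projects to $\b(A^\circ\r B^\circ)\r(\b A^\circ\r\b B^\circ)$, which is the modal axiom K, and j4 projects to $\b A^\circ\r\b\b A^\circ$, which is axiom 4; both are available in ${\sf K4}$ and hence in ${\sf K4(FP)}$.

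The interesting case is the fixed point axiom $\delta_A(\vec B)\leftrightarrow A(\delta_A(\vec B),\vec B)$. Using Definition \ref{def: forgetful projection for diagonal extension}, its left side projects to $\delta_{A^\circ}(\vec B^\circ)$. For the right side I need an auxiliary substitution lemma stating that $\circ$ commutes with formula substitution, i.e. $(A(C,\vec B))^\circ=A^\circ(C^\circ,\vec B^\circ)$; this is proved by a routine structural induction on $A$, the only nontrivial clause being $t:(-)$, where $(t:C)^\circ=\b C^\circ$ absorbs the substitution correctly. Applying it with $C=\delta_A(\vec B)$ shows that the projected axiom is exactly $\delta_{A^\circ}(\vec B^\circ)\leftrightarrow A^\circ(\delta_{A^\circ}(\vec B^\circ),\vec B^\circ)$, the fixed point axiom of ${\sf K4(FP)}$ for $A^\circ$. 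I also need to check that $\delta_{A^\circ}$ is a legitimate operator of ${\cal L}({\sf FP})$ for ${\sf K4}$, i.e. that $p$ is modalized in $A^\circ$ whenever $p$ is justified in $A$; this is immediate since each justification operator $:$ becomes a $\b$ under $\circ$.

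Finally, the rule cases. If $F$ follows by Modus Ponens from $G\r F$ and $G$, the induction hypothesis gives ${\sf K4(FP)}\vd G^\circ\r F^\circ$ and ${\sf K4(FP)}\vd G^\circ$, so ${\sf K4(FP)}\vd F^\circ$ by MP. If $F=c_{i_n}:\cdots:c_{i_1}:B$ is produced by IAN, then $F^\circ=\b^n B^\circ$; since $B$ is an axiom instance, the axiom cases above give ${\sf K4(FP)}\vd B^\circ$, and $n$ applications of Necessitation yield ${\sf K4(FP)}\vd\b^n B^\circ=F^\circ$. The main obstacle is the fixed-point clause: one must set up the substitution lemma for $\circ$ over the extended language and confirm that the modalized/justified condition transfers, so that the projected fixed point axiom is genuinely an axiom (not merely a formula) of ${\sf K4(FP)}$. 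The remaining steps are the standard soundness argument for forgetful projections.
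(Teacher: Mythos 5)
Your proof is correct and takes essentially the same approach as the paper: induction on the derivation in ${\sf J4(FP)}_\CS$, with the fixed point axiom as the key case, handled via the extended forgetful projection and the observation that a justified occurrence of $p$ in $A$ becomes a modalized occurrence in $A^\circ$. The paper only spells out that key case (treating the other axioms, MP, and IAN as routine) and silently uses the commutation of $\circ$ with substitution that you isolate as an explicit auxiliary lemma, so your write-up is the same argument carried out in fuller detail.
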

\begin{proof}
By induction on the proof of $F$ in ${\sf J4(FP)}_\CS$. We only check the case that $F$ is a fixed point axiom. Suppose $p$ is justified in the ${\sf J4}$-formula $A(p,\bar{q})$, and $F$ is
\[ \delta_A(B_1,\ldots,B_n) \leftrightarrow A(\delta_A(B_1,\ldots,B_n),B_1,\ldots,B
_n),\]
for ${\sf J4(FP)}$-formulas $B_1,\ldots,B_n$. Hence,
\[ F^\circ=\delta_{A^\circ}(B_1^\circ,\ldots,B_n^\circ) \leftrightarrow A^\circ(\delta_{A^\circ}(B_1^\circ,\ldots,B_n^\circ),B_1^\circ,\ldots,B_n^\circ).\]
 Since $p$ is justified in $A(p,\bar{q})$, $p$ is modalized in $A^\circ(p,\bar{q})$. Therefore, for the fixed point operator $\delta_{A^\circ}(\bar{q})$ we have
\[ {\sf K4(FP)} \vdash \delta_{A^\circ}(C_1,\ldots,C_n) \leftrightarrow A^\circ(\delta_{A^\circ}(C_1,\ldots,C_n),C_1,\ldots,C_n),\]
for every ${\sf K4(FP)}$-formulas $C_1,\ldots,C_n$. Thus, for ${\sf K4(FP)}$-formulas $B_1^\circ,\ldots,B_n^\circ$ we have
\[ {\sf K4(FP)} \vdash \delta_{A^\circ}(B_1^\circ,\ldots,B_n^\circ) \leftrightarrow A^\circ(\delta_{A^\circ}(B_1^\circ,\ldots,B_n^\circ),B_1^\circ,\ldots,B_n^\circ).\]
Therefore, $ {\sf K4(FP)} \vdash F^\circ$.\qed
\end{proof}

\begin{corollary}
Given a constant specification $\CS$ for ${\sf J4(FP)}$, ${\sf J4(FP)}_\CS$ is consistent.
\end{corollary}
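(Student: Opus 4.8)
The plan is to prove consistency by contradiction, transferring any inconsistency of ${\sf J4(FP)}_\CS$ through the forgetful projection down to $\GL$, whose consistency is already guaranteed arithmetically. Suppose, toward a contradiction, that ${\sf J4(FP)}_\CS$ is inconsistent, so that ${\sf J4(FP)}_\CS\vdash\bot$. Applying the preceding lemma with $F=\bot$, and noting that the forgetful projection of Definition \ref{def: forgetful projection for diagonal extension} fixes falsity, i.e. $\bot^\circ=\bot$, I obtain ${\sf K4(FP)}\vdash\bot$.

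Next I would invoke Smory\`{n}ski's conservativity theorem, according to which $\GL\vdash A$ iff ${\sf K4(FP)}\vdash A$ for every $\GL$-formula $A$. Since $\bot$ contains no fixed point operators it is a genuine $\GL$-formula, so the theorem applies directly and yields $\GL\vdash\bot$. But $\GL$ is consistent: by Solovay's arithmetical completeness (Theorem \ref{thm:Solovay Arithmetical Completeness GL}), $\GL\vdash\bot$ would entail $\PA\vdash\bot^*=\bot$, contradicting the consistency of $\PA$. This contradiction establishes that ${\sf J4(FP)}_\CS$ is consistent for every constant specification $\CS$.

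I expect no genuine obstacle here, since the substantive work has already been carried out in the lemma relating ${\sf J4(FP)}_\CS$ to ${\sf K4(FP)}$ via the forgetful projection. The only point deserving a moment's care is confirming that Smory\`{n}ski's result, stated for $\GL$-formulas, is applicable to the projected formula — that is, that after projection we land in the pure modal fragment rather than the fixed point extended language ${\cal L}({\sf FP})$ — which is immediate once we observe $\bot^\circ=\bot$.
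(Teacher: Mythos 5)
Your proposal is correct and follows essentially the same route as the paper: the inconsistency is pushed through the forgetful projection (using $\bot^\circ=\bot$) to ${\sf K4(FP)}$, whose consistency the paper has already established in Section \ref{sec:ML(FP)} via Smory\`{n}ski's conservativity over $\GL$. You merely unfold explicitly the chain ${\sf K4(FP)} \to \GL \to \PA$ that the paper leaves as an implicit citation, which is a matter of presentation rather than a different argument.
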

\begin{proof}
Suppose ${\sf J4(FP)}_\CS$ is inconsistent, ${\sf J4(FP)}_\CS\vd \bot$. Then, ${\sf K4(FP)}\vd \bot^\circ$, and thus ${\sf K4(FP)}\vd \bot$, which means ${\sf K4(FP)}$ is inconsistent, which is a contradiction. \qed
\end{proof}

Other fixed point extensions of justification logics are not known to be consistent.

The following lemma is an {\sf EGL} counterpart of the fixed point equation (\ref{eq:Lob formula fixed point}), $\b(\b A\r A) \leftrightarrow \b A$, in \GL.

\begin{lemma}
 The ${\sf EGL}$-formula $F(p)=c\cdot t:(p\rightarrow A)$ has the fixed point $t:A$ in ${\sf EGL}_\mathcal{TCS}$, where $c:(A\r(t:A\r A))$.
\end{lemma}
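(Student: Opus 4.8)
The plan is to prove directly the biconditional $t:A \leftrightarrow (c\cdot t):(t:A\r A)$, since this is precisely the assertion that $t:A$ is a fixed point of $F(p)=c\cdot t:(p\r A)$: substituting $p:=t:A$ gives $F(t:A)=(c\cdot t):(t:A\r A)$. This is the ${\sf EGL}$ counterpart of the $\GL$ equivalence $\b A\leftrightarrow\b(\b A\r A)$ recorded in (\ref{eq:Lob formula fixed point}), with the explicit L\"{o}b axiom $s:(t:A\r A)\r t:A$ of ${\sf EGL}$ taking over the role of the L\"{o}b principle. I would establish the two implications separately and then combine them.

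For the direction $(c\cdot t):(t:A\r A)\r t:A$, I would simply instantiate the explicit L\"{o}b axiom scheme $s:(t:A\r A)\r t:A$ at $s:=c\cdot t$. This yields the implication in a single step and uses nothing beyond the defining axiom of ${\sf EGL}$.

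For the converse $t:A\r (c\cdot t):(t:A\r A)$, I would use the hypothesis $c:(A\r(t:A\r A))$, which is legitimate because $A\r(t:A\r A)$ is a propositional tautology (an instance of $P\r(Q\r P)$) and is therefore internalized by the rule IAN under the total constant specification, so such a $c$ exists. I would then invoke the axiom jK in the form $c:(A\r(t:A\r A))\r\bigl(t:A\r (c\cdot t):(t:A\r A)\bigr)$ and discharge the antecedent $c:(A\r(t:A\r A))$ by Modus Ponens, obtaining the desired implication.

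Combining the two implications by propositional reasoning gives $t:A\leftrightarrow (c\cdot t):(t:A\r A)$, that is $t:A\leftrightarrow F(t:A)$, so $t:A$ is a fixed point of $F$. I do not anticipate a genuine obstacle: the derivation is short and, notably, never invokes the j4 axiom of ${\sf EGL}$, since all the work is carried out by jK, IAN, and the explicit L\"{o}b axiom. The only point requiring slight care is the bookkeeping of the application term: because application binds tighter than $:$, the formula $F(t:A)$ must be parsed as $(c\cdot t):(t:A\r A)$, which is exactly the form produced on the left by jK and consumed on the right by the explicit L\"{o}b axiom.
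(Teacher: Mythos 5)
Your proposal is correct and follows essentially the same route as the paper: obtain $c:(A\r(t:A\r A))$ by IAN (legitimate under $\mathcal{TCS}$ since $A\r(t:A\r A)$ is a tautology), derive $t:A\r c\cdot t:(t:A\r A)$ via the jK instance $c:(A\r(t:A\r A))\r(t:A\r c\cdot t:(t:A\r A))$ and MP, get the converse directly from the explicit L\"{o}b axiom with $s:=c\cdot t$, and combine propositionally. The paper's five-line derivation is exactly this argument, so there is nothing to add.
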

\begin{proof}
\begin{enumerate}
\item $c:(A\r(t:A\r A))$, by IAN
\item $c\cdot t:(t:A\rightarrow A) \r t:A$, an instance of explicit L\"{o}b axiom
\item $c:(A\r(t:A\r A)) \r (t:A \r c\cdot t:(t:A\r A))$, an instance of jK
\item $t:A \r c\cdot t:(t:A\r A)$, from 1, 3 by MP
\item $c\cdot t:(t:A\rightarrow A) \leftrightarrow t:A$, from 2, 4 by propositional reasoning.\qed
\end{enumerate}
\end{proof}
\subsection{Justification $\mu$-calculus}
In the previous section we showed that the fixed point extension of some of the justification logics is inconsistent. In this section, we try to find a consistent fixed point extension of these logics, based on $\mu$-calculus.

First we introduce a justification version of the modal mu-calculus ${\sf K(\mu)}$, called $\m$.
The language of $\m$ is an expansion of the language of ${\sf J}$.
Terms of $\m$ are defined similar to the terms of ${\sf J}$ by the following grammar:
\[ t::= x_i~|~c_i~|~t\cdot t~|~t+t.\]
Formulas of $\m$ are formed by the following grammar:
\[ A::= p~|~\neg A~|~A\wedge A~|~t:A~|~\mu p.A,\]
where $p$ is a propositional variable, $t$ is a term, and in $\mu p.A$ the formula $A$ is $p$-positive. We also assume to have the usual definitions
for $\neg$, $\vee$, $\r$ and $\leftrightarrow$ as logical
connectives in the above language. $\bot$ is defined as $A\wedge\neg A$ for some $\m$-formula $A$. In addition $\nu p.A$ is defined as before:
\[\nu p.A := \neg\mu p. \neg A(\neg p).\]

$\m$ is axiomatizable by adjoining to the basic justification logic {\sf J} the closure axiom scheme $\mu$-CL and the induction rule $\mu$-IND from Section \ref{sec:Modal mu-calculus}.
The definitions of constant specification and $\m_\CS$ are similar to those of $\JL$.

It is easy to verify that the deduction theorem holds in $\m_\CS$, for arbitrary $\CS$, and the substitution lemma holds in $\m_\mathcal{TCS}$ and $\m_\emptyset$. Note that the internalization lemma does not hold in $\m$ in its general form.

\begin{lemma}[Internalization Lemma for $\m$]\label{lemma:Internalization Lemma mu}
Given an axiomatically appropriate constant specification $\CS$ for $\m$, if $\m$-formula $F$ is derivable in $\m_\CS$ without the use of rule $\mu$-IND, then there is a justification term
$t$ such that $\m_\CS \vdash t:F$.
\end{lemma}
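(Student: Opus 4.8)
The plan is to proceed exactly as in the proof of the general Internalization Lemma (Lemma~\ref{lemma:Internalization Lemma}), that is, by induction on the length of the derivation of $F$ in $\m_\CS$; the whole point of the hypothesis forbidding $\mu$-IND is that it deletes the only rule that cannot be lifted to a term, so the induction becomes essentially the one already carried out in the Lifting Lemma (Lemma~\ref{lem:lifting lemma}) with an empty context.

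First I would record that a $\mu$-IND-free derivation in $\m_\CS$ uses only the axiom schemes Taut, Sum, jK, and the closure axiom $\mu$-CL, together with the rules MP and IAN. The base case is when $F$ is an axiom instance of one of these four schemes. Here I would invoke the axiomatic appropriateness of $\CS$: there is a constant $c$ with $c:F\in\CS$, and since $c:F$ has the form required by IAN and lies in $\CS$, the rule IAN yields $\m_\CS\vdash c:F$, so I set $t:=c$. The essential observation is that this argument is completely uniform in the axiom scheme; in particular the new closure axiom $\mu$-CL is internalized in exactly the same manner as jK or Sum, so adjoining $\mu$-CL to the system introduces no obstacle.

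For the induction step there are only two cases. If $F$ is obtained by MP from $G\r F$ and $G$, then the induction hypothesis supplies terms $u,v$ with $\m_\CS\vdash u:(G\r F)$ and $\m_\CS\vdash v:G$; an instance of jK, namely $u:(G\r F)\r(v:G\r u\cdot v:F)$, together with two applications of MP gives $\m_\CS\vdash u\cdot v:F$, so $t:=u\cdot v$. If $F$ is produced by IAN, then $F=c_{i_n}:\ldots:c_{i_1}:B\in\CS$ for some axiom instance $B$; since $\CS$ is axiomatically appropriate there is a constant $c$ with $c:F\in\CS$, whence $\m_\CS\vdash c:F$ by IAN and $t:=c$.

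The step I expect to carry the real content is not a calculation but the conceptual point behind the hypothesis: an application of $\mu$-IND concludes $\mu p.A(p)\r B$ from $A(B)\r B$, and the language of $\m$ contains no term operation designed to absorb this rule, so there is no jK-style way to turn a justification for the premise into a justification for the conclusion. This is precisely why the unrestricted Internalization Lemma fails for $\m$, as noted just before the statement. Restricting to $\mu$-IND-free derivations removes exactly this case, and every remaining case has already appeared in Lemma~\ref{lem:lifting lemma}, so the induction closes.
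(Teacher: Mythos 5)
Your proof is correct and is exactly the argument the paper intends: the paper states Lemma~\ref{lemma:Internalization Lemma mu} without proof precisely because it is the same induction as in Lemma~\ref{lem:lifting lemma} (empty context), with $\mu$-CL handled like any other axiom via axiomatic appropriateness and IAN, and with the excluded rule $\mu$-IND being the only case that could not be lifted. Your closing remark about why $\mu$-IND blocks internalization also matches the paper's observation that the lemma fails for $\m$ in its general form.
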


 Next by translating $\m$ into modal $\mu$-calculus ${\sf K(\mu)}$ we show that $\m$ is consistent.

\begin{definition}\label{Def: forgetful projection mu}
The forgetful projection $\circ$ of Definition \ref{def: forgetful projection} is extended to the language of $\m$ as follows: $(\mu p.A)^\circ := \mu p. A^\circ$.
\end{definition}

\begin{lemma}\label{lem:forgetful projection mu}
Given a constant specification $\CS$ for $\m$, for every $\m$-formula $A$, if $\m_\CS\vd A$, then ${\sf K(\mu)}\vd A^\circ$.
\end{lemma}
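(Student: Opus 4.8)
The plan is to prove the lemma by induction on the derivation of $A$ in $\m_\CS$, showing that the forgetful projection of every axiom of $\m$ is a theorem of ${\sf K(\mu)}$ and that each rule of $\m_\CS$ is matched by a corresponding derivation in ${\sf K(\mu)}$. Before beginning the induction I would establish two auxiliary facts about $\circ$. First, a substitution commutation lemma: for every formula $A$, propositional variable $p$, and formula $B$ one has $(A[B/p])^\circ = A^\circ[B^\circ/p]$; this follows by a routine induction on the structure of $A$, using that $\circ$ commutes with the propositional connectives and that $(t:C)^\circ = \b C^\circ$ ignores the term $t$. Second, since $\circ$ commutes with negation and sends $p$ to $p$, it preserves the parity of negations governing any occurrence of $p$, so if $A$ is $p$-positive then $A^\circ$ is $p$-positive and $\mu p.A^\circ$ is a well-formed ${\sf K(\mu)}$-formula.

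With these in hand the axiom cases are direct. A propositional tautology projects to a propositional tautology (the subformulas $t:C$ become boxed formulas, but the Boolean skeleton is unchanged), so it is a theorem of ${\sf K(\mu)}$. The axiom Sum projects to $\b C^\circ \r \b C^\circ$, a tautology; the axiom jK projects to $\b(C^\circ\r D^\circ)\r(\b C^\circ\r\b D^\circ)$, which is exactly the modal axiom K available in ${\sf K(\mu)}$; and by the substitution lemma the closure axiom $\mu$-CL projects to $A^\circ(\mu p.A^\circ(p))\leftrightarrow \mu p.A^\circ(p)$, which is the closure axiom of ${\sf K(\mu)}$.

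For the rules, Modus Ponens is immediate since $(B\r A)^\circ = B^\circ\r A^\circ$ and ${\sf K(\mu)}$ is closed under Modus Ponens. For Iterated Axiom Necessitation, a formula $c_{i_n}\!:\!\cdots\!:\!c_{i_1}\!:\!B$ with $B$ an axiom instance projects to $\b^n B^\circ$; since $B^\circ$ is a theorem of ${\sf K(\mu)}$ by the axiom cases, $n$ applications of the Necessitation rule of ${\sf K(\mu)}$ yield $\b^n B^\circ$. For the induction rule $\mu$-IND, the premise $A(B)\r B$ projects by the substitution lemma to $A^\circ(B^\circ)\r B^\circ$, which by the induction hypothesis is a theorem of ${\sf K(\mu)}$; applying the modal $\mu$-IND rule gives $\mu p.A^\circ(p)\r B^\circ$, which is precisely $(\mu p.A(p)\r B)^\circ$.

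I expect the main obstacle to be the two fixed-point cases ($\mu$-CL and $\mu$-IND), where everything hinges on the substitution commutation lemma and on the preservation of positivity; these are what guarantee that the projected formulas are legal ${\sf K(\mu)}$-formulas and that the projected $\mu$-CL and $\mu$-IND align exactly with their modal counterparts. The remaining cases are essentially the same as in the standard forgetful-projection argument for justification logics (cf.\ the analogous lemma for ${\sf J4(FP)}$ proved above).
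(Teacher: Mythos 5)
Your proof is correct and takes essentially the same approach as the paper: the paper's proof is just the one-line statement ``by induction on the proof of $A$ in $\m_\CS$,'' and your argument is exactly that induction carried out in full, with the right supporting facts (the substitution commutation $(A[B/p])^\circ = A^\circ[B^\circ/p]$ and preservation of $p$-positivity under $\circ$) needed to handle the $\mu$-CL and $\mu$-IND cases.
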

\begin{proof}
By induction on the proof of formula $A$ in $\m_\CS$.
\end{proof}

\begin{corollary}\label{cor:K(mu) is consistent}
Given a constant specification $\CS$ for $\m$, $\m_\CS$ is consistent.
\end{corollary}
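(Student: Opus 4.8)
The plan is to mimic, almost verbatim, the consistency argument already used for ${\sf J4(FP)}_\CS$, now routed through the forgetful projection into the modal $\mu$-calculus supplied by Lemma \ref{lem:forgetful projection mu}. First I would argue by contradiction: suppose $\m_\CS$ is inconsistent, so that $\m_\CS \vd \bot$. Applying Lemma \ref{lem:forgetful projection mu} to the derivable formula $\bot$ immediately yields ${\sf K(\mu)} \vd \bot^\circ$.

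Next I would unpack $\bot^\circ$. Since in $\m$ the symbol $\bot$ is not primitive but abbreviates $A\wedge\neg A$ for some $\m$-formula $A$, Definition \ref{Def: forgetful projection mu} together with the fact that $\circ$ commutes with the propositional connectives gives $\bot^\circ = A^\circ\wedge\neg A^\circ$. This is propositionally a contradiction, so ${\sf K(\mu)}\vd A^\circ\wedge\neg A^\circ$ and hence ${\sf K(\mu)}\vd\bot$; that is, ${\sf K(\mu)}$ itself is inconsistent.

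Finally I would invoke the consistency of ${\sf K(\mu)}$. As recalled in Section \ref{sec:Modal mu-calculus}, ${\sf K(\mu)}$ is a consistent system: its consistency follows from the standard Kripke/Knaster--Tarski semantics for the modal $\mu$-calculus, under which the closure axiom $\mu$-CL and the induction rule $\mu$-IND are sound. Thus ${\sf K(\mu)}\not\vd\bot$, contradicting the previous paragraph, and we conclude that $\m_\CS$ is consistent.

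The argument is essentially routine once Lemma \ref{lem:forgetful projection mu} is in hand, so I do not anticipate a genuine obstacle at this stage. The only points requiring minor care are the bookkeeping for the defined symbol $\bot$ under $\circ$ (handled in the second paragraph) and the appeal to the consistency of the target logic ${\sf K(\mu)}$, which is where all the real content — the existence of sound fixed-point semantics for the $\mu$-calculus — is concentrated; the latter I would simply cite from the $\mu$-calculus literature rather than reprove.
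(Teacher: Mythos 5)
Your proof is correct and takes essentially the same route as the paper: assume $\m_\CS\vd\bot$, apply Lemma \ref{lem:forgetful projection mu} to obtain ${\sf K(\mu)}\vd\bot^\circ$, and contradict the consistency of ${\sf K(\mu)}$. Your extra care in unpacking $\bot^\circ$ via the definition $\bot = A\wedge\neg A$ is a detail the paper elides (it simply passes from $\bot^\circ$ to $\bot$), but it changes nothing substantive.
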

\begin{proof}
Suppose $\m$ is inconsistent, $\m_\CS\vd \bot$. Thus, by Lemma \ref{lem:forgetful projection mu} we have ${\sf K(\mu)}\vd \bot^\circ$, or ${\sf K(\mu)}\vd \bot$, which would contradict the consistency of ${\sf K(\mu)}$. \qed
\end{proof}

Since the modal part of the $\mu$-calculus ${\sf K(\mu)}$ can be consistently extended to other modal logics, such as {\sf T}, {\sf S4}, {\sf S5}, we can consistently add other justification axioms to $\m$. For example, ${\sf LP(\mu)}$ is obtained by adding the term operator $!$ to the language of $\m$ and the axioms jT and j4 to $\m$, and ${\sf JT45(\mu)}$ is obtained by adding  the term operator $?$ to the language of ${\sf LP(\mu)}$ and the axioms j5 to ${\sf LP(\mu)}$. The proof of consistency of ${\sf JT45(\mu)}$ and ${\sf LP(\mu)}$ is similar to the proof of Corollary \ref{cor:K(mu) is consistent}.
\section{Fixed points in the quantified logic of proofs}\label{sec:QLP}
So far we have only considered the propositional justification logics. There are two known ways to introduce quantifiers in the logic of proofs:

\begin{enumerate}
 \item Quantifiers over objects (which the objects are interpreted as elements of the domain of models). Artemov and Yavorskaya \cite{ArtemovSidon2001} proved that first order logic of proofs equipped  with an arithmetical provability semantics is not axiomatizable. Without the arithmetical provability semantics an axiomatic system for first order logic of proofs is given in \cite{ArtemovSidon2011}.

 \item Quantifiers over justifications or proofs. Yavorsky \cite{Yavorsky2001} proved that the logic of proofs with quantifiers over proofs equipped  with an arithmetical provability semantics (in which the justification assertions are interpreted by multi-conclusion version of the G\"{o}del proof predicate in \PA) is not axiomatizable. Without the arithmetical provability semantics an axiomatic system for logic of proofs with quantifiers over justifications is given  by Fitting in \cite{Fitting2004,Fitting2008}.
 \end{enumerate}

In the following we recall the Fitting's quantified logic of proofs $\QLP$. Then we introduce fixed point extension of $\QLP$, and formalize the Knower and the Surprise Test Paradoxes in $\QLP$.
\subsection{Axiom system and basic properties of \QLP}
Instead of simple justification constants, Fitting uses \textit{primitive proof terms}. In fact, the language of $\QLP$ contains a countable set of \textit{primitive function symbols} of various arities. Primitive function symbols with arity 0 are indeed  justification constants. A primitive (proof) term is a term of the form $f^n(x_1,\ldots,x_n)$, or simply $f(x_1,\ldots,x_n)$, where $f^n$ is a primitive function symbol of arity $n$ and $x_1,\ldots,x_n$ are justification variables.

Let us first describe the language and axiom system of $\QLP^-$ (a subsystem of {\sf QLP}), and then those of $\QLP$. Justification terms and formulas of $\QLP^-$ are constructed by the following grammars:
\[ t::= x_i~|~f^n(x_1,\ldots,x_n)~|~t\cdot t~|~t+t~|~!t,\]
\[ A::= p~|~\bot~|~\neg A~|~A\wedge A~|~A\vee A~|~A\rightarrow A~|~t:A~|~(\forall x)A~|~(\exists x) A,\]
where $i,n$ are non-negative integers, $x,x_i$'s are justification variables, $t$ is a justification term, and $f^n(x_1,\ldots,x_n)$ is a primitive proof term. Note that the universal quantifier quantifies over justification variables.
The definition of free and bound occurrences of variables and substitution of variables by terms are as in the first order logic.

Axioms and rules of $\QLP^-$ are a combination of axioms and rules of first order logic and logic of proofs $\LP$. More precisely, axioms of $\QLP^-$ are:
\begin{description}
\item[Taut.] All tautologies of propositional logic,
\item[Q1.] $(\forall x)A(x)\r A(t)$, where $t$ is free for $x$ in $A(x)$,
    \item[Q2.] $(\forall x)(A\r B(x))\r(A\r (\forall x)B(x))$, where $x$ does not occur free in $A$,
    \item[Q3.] $A(t) \r (\exists x) A(x)$, where $t$ is free for $x$ in $A(x)$,
    \item[Q4.] $(\forall x)(A(x)\r B)\r((\exists x) A(x)\r B)$, where $x$ does not occur free in $B$,
 \item[jK.] $s:(A\r B)\r(t:A\r (s\cdot t):B)$,
   \item[jT.] $t:A\r A$,
    \item[j4.] $t:A\r !t:t:A$,
     \item[Sum.] $s:A\r (s+t):A~,~s:A\r (t+s):A$,
\end{description}
Rules of $\QLP^-$ are Modus Ponens, Generalization, and Axiom Necessitation rule:
\[ \di{\f{A ~~~ A\r B}{B}MP}, \quad \di{\f{A}{(\forall x)A}}Gen, \quad
\di{\f{A~\mbox{is an axiom instance}}{f(x_1,\ldots,x_n):A}AN},\]
where $f(x_1,\ldots,x_n)$ is a primitive term.

Fitting's quantified logic of proofs is an extension of $\QLP^-$ by first adding a binary term operator, called \textit{uniform verifier}, as follows: if $t$ is a term and $x$ is a justification variable, then $(t \forall x)$ is a term. The occurrence of $x$ in $(t \forall x)$ is considered to be bound.  Thus justification terms of $\QLP$ are constructed by the following grammar:

\[ t::= x_i~|~f^n(x_1,\ldots,x_n)~|~t\cdot t~|~t+t~|~!t~|~(t \forall x).\]
$\QLP$ in addition has the following axiom, called \textit{uniformity formula} UF:

$$(\exists y)y:(\forall x) t:A \r (t\forall x):(\forall x)A,$$
provided that $y$ does not occur free in $t$ or $A$, and \textit{Quantified Necessitation} rule:

\[\di{\f{A}{(\exists x)x:A}}qNec\]
provided that $x$ does not occur free in  $A$.

\begin{definition}
\begin{enumerate}
\item The \textit{total primitive term specification} for $\QLP$ ($\QLP^-$) is the set of \textit{all} formulas of the form $f(x_1,\ldots,x_n):A$, where $A$ is an axiom instance of $\QLP$ ($\QLP^-$) and $f(x_1,\ldots,x_n)$ is a primitive term.
\item A \textit{primitive term specification} $\mathcal{F}$ for $\QLP$ ($\QLP^-$)  is a subset of the total primitive term specification for $\QLP$ ($\QLP^-$).
\item  A primitive term specification $\mathcal{F}$ for $\QLP$ ($\QLP^-$)  is called \textit{axiomatically appropriate} if for each axiom instance $A$ of $\QLP$ ($\QLP^-$) there is a primitive term $f(x_1,\ldots,x_n)$ such that $f(x_1,\ldots,x_n):A\in\mathcal{F}$.
\end{enumerate}
 \end{definition}

Let $\QLP_\mathcal{F}$ ($\QLP^-_\mathcal{F}$) be the fragment of $\QLP$ ($\QLP^-$) where the Axiom Necessitation rule only produces formulas from $\mathcal{F}$. Thus $\QLP_\emptyset$ ($\QLP^-_\emptyset$) denotes the fragment of $\QLP$ ($\QLP^-$) without the Axiom Necessitation rule.

It is easy to show the following (see \cite{Fitting2008}).

\begin{theorem}
Given a primitive term specification $\mathcal{F}$, the Justified Universal Generalization rule:
\[\di{\f{t:A(x)}{(t\forall x):(\forall x)A(x)}}JUG\]
is admissible in $\QLP_\mathcal{F}$.
\end{theorem}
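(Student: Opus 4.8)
The plan is to derive JUG directly from the uniformity formula UF together with the rules Gen and qNec, all of which belong to $\QLP_\mathcal{F}$ (only the Axiom Necessitation rule is constrained by $\mathcal{F}$, and the extra steps below introduce no new applications of it). So suppose $\QLP_\mathcal{F}\vd t:A(x)$; the goal is to show $\QLP_\mathcal{F}\vd (t\forall x):(\forall x)A(x)$.

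First I would apply Gen to the premise to obtain $(\forall x)(t:A(x))$; this is legitimate because $t:A(x)$ is a theorem (derived from no hypotheses), so generalization over $x$ is unrestricted. Next I would fix a justification variable $y$ occurring free in neither $t$ nor $A$, and apply qNec to obtain $(\exists y)\,y:(\forall x)(t:A(x))$, whose side condition (that the quantified variable not occur free in the justified formula) is met by the freshness of $y$. Finally, the implication $(\exists y)\,y:(\forall x)(t:A(x)) \r (t\forall x):(\forall x)A(x)$ is a legitimate instance of UF --- again precisely because $y$ is fresh for $t$ and $A$ --- and a single application of MP then delivers $(t\forall x):(\forall x)A(x)$.

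The only point requiring care is the bookkeeping of side conditions, and it resolves cleanly: one choice of $y$ fresh for both $t$ and $A$ simultaneously validates the qNec step and the UF instance, so I do not expect any genuine obstacle. In particular, the argument never needs $x$ to be absent from $t$, since the operator $(t\forall x)$ in the conclusion binds any free occurrence of $x$ in $t$; and because no new Axiom Necessitation is invoked beyond what already produced the premise, admissibility holds for the given $\mathcal{F}$ rather than only for some enlargement of it.
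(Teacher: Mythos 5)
Your proposal is correct and follows exactly the paper's own proof: Gen applied to the premise, then qNec with a fresh variable $y$ not occurring free in $t$ or $A$, and finally the UF instance together with MP to obtain $(t\forall x):(\forall x)A(x)$. The additional remarks on side-condition bookkeeping and on not invoking new Axiom Necessitation instances are accurate but do not change the argument.
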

\begin{proof}
Suppose $\QLP_\mathcal{F} \vdash t:A(x)$. By Gen, we get $\QLP_\mathcal{F} \vdash (\forall x) t:A(x)$. Then by qNec, we get $\QLP_\mathcal{F} \vdash (\exists y) y:(\forall x) t:A(x)$, for a variable $y$ where does not occur free in $t$ or $A$. By axiom UF and MP, we obtain $\QLP_\mathcal{F} \vdash (t \forall x):(\forall x) A(x)$ as desire. \qed
\end{proof}

In fact, the original axiomatization of $\QLP$ in \cite{Fitting2008} has the rule JUG instead of Gen and qNec.

It is worth noting that a restricted version of the rule qNec is admissible in $\QLP^-$.

\begin{theorem}
For the total primitive term specification $\mathcal{F}$, the following rule is admissible in $\QLP^-_\mathcal{F}$:
\[\di{\f{A~\mbox{is an axiom instance}}{(\exists x)x:A}}\]
where $x$  does not occur free in  $A$.
\end{theorem}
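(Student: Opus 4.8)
The plan is to internalize the axiom instance directly with the Axiom Necessitation rule and then existentially generalize over the justification variable. Since $A$ is an axiom instance and $\mathcal{F}$ is the \emph{total} primitive term specification, every formula of the form $f(x_1,\ldots,x_n):A$ belongs to $\mathcal{F}$; in particular, choosing a primitive function symbol $c$ of arity $0$ (a justification constant), the rule AN yields $\QLP^-_\mathcal{F}\vdash c:A$.

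Next I would apply the quantifier axiom Q3, $\Phi(t)\r(\exists x)\Phi(x)$, to the matrix $\Phi(x)=x:A$ and the term $c$. The only free occurrence of $x$ in $\Phi(x)=x:A$ is in the term position before the colon, because by hypothesis $x$ does not occur free in $A$; hence $\Phi(c)$ is exactly $c:A$. Moreover $c$, being a constant, contains no variables and is therefore trivially free for $x$ in $x:A$, so the side condition of Q3 is satisfied. This gives the axiom instance $c:A\r(\exists x)(x:A)$.

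Combining the two facts by Modus Ponens produces $\QLP^-_\mathcal{F}\vdash(\exists x)(x:A)$, which is precisely the conclusion of the rule; since $A$ ranged over an arbitrary axiom instance with $x$ not free in it, the rule is admissible.

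The argument is short, and the only points requiring care are bookkeeping: first, that the total specification indeed contains an arity-$0$ primitive term, so that a variable-free witness $c$ is available (this is what lets us bypass any capture issue in Q3); and second, that the substitution in the premise of Q3 touches only the term position of $x:A$ and not $A$ itself, which is guaranteed by the assumption that $x$ is not free in $A$. I do not expect a genuine obstacle here: the essential content is that AN already supplies an internalization of axioms, so no analogue of the full qNec or JUG machinery is needed, in contrast with the admissibility proof for JUG above.
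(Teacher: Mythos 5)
Your proof is correct and is essentially identical to the paper's: both apply AN with a justification constant $c$ to obtain $c:A$, then use the Q3 instance $c:A\r(\exists x)x:A$ (valid since $x$ is not free in $A$) and Modus Ponens. Your extra bookkeeping about $c$ being free for $x$ and the substitution only affecting the term position is a fair elaboration of what the paper leaves implicit.
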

\begin{proof}
Suppose A is an axiom instance of $\QLP^-_\mathcal{F}$. By AN, we get $\QLP^-_\mathcal{F} \vdash c:A$. Then, by axiom $c:A \r (\exists x)x:A$, where $x$  does not occur free in  $A$, we have $\QLP_\mathcal{F} \vdash (\exists x) x:A$ as desire. \qed
\end{proof}

\begin{lemma}[Internalization Lemma for $\QLP$]\label{lemma:Internalization Lemma QLP}
Let $\mathcal{F}$ be an axiomatically appropriate primitive term specification. If $\QLP_\mathcal{F} \vdash F$, then there is a justification term
$t$ such that $\QLP_\mathcal{F} \vdash t:F$.
\end{lemma}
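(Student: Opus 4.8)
The plan is to argue by induction on the length of the derivation of $F$ in $\QLP_\mathcal{F}$, in the spirit of the proof of the Lifting Lemma (Lemma~\ref{lem:lifting lemma}) but now accounting for the extra rules Gen and qNec and for the primitive-term form of Axiom Necessitation. At each stage I produce a term $t$ together with a $\QLP_\mathcal{F}$-derivation of $t:F$, so that the term justifying $F$ is assembled from the terms justifying the premises of the last rule applied.

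For the base case, suppose $F$ is an axiom instance. Since $\mathcal{F}$ is axiomatically appropriate there is a primitive term $f(x_1,\ldots,x_n)$ with $f(x_1,\ldots,x_n):F\in\mathcal{F}$, and the rule AN gives $\QLP_\mathcal{F}\vd f(x_1,\ldots,x_n):F$; put $t:=f(x_1,\ldots,x_n)$. If $F$ is obtained by Modus Ponens from $G\r F$ and $G$, then by the induction hypothesis there are terms $u,v$ with $\QLP_\mathcal{F}\vd u:(G\r F)$ and $\QLP_\mathcal{F}\vd v:G$; the jK instance $u:(G\r F)\r(v:G\r (u\cdot v):F)$ followed by two applications of MP yields $(u\cdot v):F$, so $t:=u\cdot v$. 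These two cases are exactly the routine propositional lifting already carried out in Lemma~\ref{lem:lifting lemma}.

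The genuinely new cases are the rules that manipulate quantifiers and proof terms. If $F=(\forall x)A(x)$ is obtained by Generalization from $A(x)$, then by the induction hypothesis $\QLP_\mathcal{F}\vd u:A(x)$ for some $u$, and the admissible rule JUG immediately gives $\QLP_\mathcal{F}\vd (u\forall x):(\forall x)A(x)$; put $t:=(u\forall x)$. The case of Axiom Necessitation is settled by a single use of axiom j4: here $F$ already has the shape $f(x_1,\ldots,x_n):A$, so the j4 instance $f(x_1,\ldots,x_n):A \r !f(x_1,\ldots,x_n):(f(x_1,\ldots,x_n):A)$ together with MP (applied to the already-derived $F$) gives $!f(x_1,\ldots,x_n):F$, whence $t:=!f(x_1,\ldots,x_n)$; the same trick covers any rule whose conclusion is itself a justification assertion. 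For qNec, where $F=(\exists x)x:A$ is inferred from $A$ with $x$ not free in $A$, the induction hypothesis gives $u$ with $\QLP_\mathcal{F}\vd u:A$; one internalizes the Q3 instance $(u:A)\r(\exists x)(x:A)$ via axiomatic appropriateness, obtaining some $g$ with $g:((u:A)\r F)$, derives $!u:(u:A)$ from $u:A$ by j4, and combines the two through jK to obtain $(g\cdot !u):F$, so $t:=g\cdot !u$. (Adopting Fitting's original axiomatization with JUG in place of Gen and qNec collapses these last two quantifier cases into one.)

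The step I expect to be the main obstacle is the Generalization (equivalently JUG) case, since it is the only place where the justifying term must track a bound variable: one has to be sure that passing from $u:A(x)$ to $(u\forall x):(\forall x)A(x)$ is legitimate, which rests on the freshness side-conditions built into the admissibility proof of JUG, where the auxiliary variable introduced by qNec and UF must avoid the free variables of $u$ and of $A$. Verifying that these conditions can always be arranged, and that the substitution and variable-capture discipline of Lemma~\ref{lemma: substitution lemma JL} is respected when the primitive terms of $\mathcal{F}$ carry free variables, is the only delicate bookkeeping; all remaining cases reduce to the propositional lifting already established.
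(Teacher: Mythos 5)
Your proof is correct and takes essentially the same route as the paper's: induction on the derivation, with the Gen case discharged by (the derivation underlying) the admissible rule JUG, and the qNec case handled exactly as in the paper by internalizing the Q3 instance $u:A \r (\exists x)x:A$ via axiomatic appropriateness, deriving $!u:u:A$ by j4, and combining through jK. Your explicit j4 treatment of the AN case ($t:=\,!f(x_1,\ldots,x_n)$) is a useful addition: the paper folds this case into ``similar to the Lifting Lemma,'' even though the $\QLP$ notion of axiomatically appropriate primitive term specification lacks the iteration clause that the propositional Lifting Lemma relies on there, so your j4 trick is in fact the right way to close that case.
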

\begin{proof}
The proof is by induction on the derivation of $F$ in $\QLP_\mathcal{F}$ (similar to the proof of Lemma \ref{lem:lifting lemma}). The only new cases are when $F$ is obtained by the rules Gen and qNec.

For the case of Gen, suppose $F=(\forall x) A$ is obtained from $A$. By the induction hypothesis, there is a term $u$ such that $u:A$ is derivable in $\QLP_\mathcal{F}$. Using Gen, we have $(\forall x) u:A$. Then by qNec, we get $(\exists y) y: (\forall x)u:A$, for a variable $y$ where does not occur free in $u$ or $A$. By axiom UF and MP, we obtain $(u \forall x):(\forall x) A(x)$. Thus it suffices to put $t:=(u \forall x)$.

For the case of qNec, suppose $F=(\exists x) x:A$ is obtained from $A$, where $x$  does not occur free in  $A$. By the induction hypothesis, there is a term $u$ such that $u:A$ is derivable in $\QLP_\mathcal{F}$. By axiom j4 and MP, we get $!u:u:A$. Since $u:A \r (\exists x) x:A$ is an instance of axiom Q3, by AN we get $c:( u:A \r (\exists x) x:A)$ for some constant $c$. Now from the latter and $!u:u:A$ and axiom jK, we get $c\cdot !u :(\exists x) x:A$. Thus it suffices to put $t:=c\cdot !u$. \qed
\end{proof}

As you can see from the above proof in order to obtain an internalized version of the Generalization rule we need the uniformity formula. Therefore, in general the internalization property does not hold in $\QLP^-$. However we have a restricted form of the internalization lemma.

\begin{lemma}[Internalization Lemma for $\QLP^-$]\label{lemma:Internalization Lemma QLP^-}
Let $\mathcal{F}$ be an axiomatically appropriate primitive term specification. If $F$ is derivable in $\QLP^-_\mathcal{F}$ without the use of rule Gen, then there is a justification term $t$ such that $\QLP^-_\mathcal{F} \vdash t:F$.
\end{lemma}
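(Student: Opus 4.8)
The plan is to argue by induction on the Gen-free derivation of $F$ in $\QLP^-_\mathcal{F}$, following the template of the Lifting Lemma (Lemma~\ref{lem:lifting lemma}) and of the Internalization Lemma for $\QLP$ (Lemma~\ref{lemma:Internalization Lemma QLP}). Since the rules of $\QLP^-$ are Modus Ponens, Gen, and AN, forbidding Gen leaves only three situations to analyse: the leaves where $F$ is an axiom instance, the leaves produced by AN, and the inductive step for MP. I would first dispose of the axiom case: if $F$ is an axiom instance then, because $\mathcal{F}$ is axiomatically appropriate, there is a primitive term $f(x_1,\ldots,x_n)$ with $f(x_1,\ldots,x_n):F\in\mathcal{F}$, whence $\QLP^-_\mathcal{F}\vdash f(x_1,\ldots,x_n):F$ by AN, and I put $t:=f(x_1,\ldots,x_n)$.

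For the MP step, suppose $F$ follows from $G\r F$ and $G$. The induction hypothesis supplies terms $u$ and $v$ with $\QLP^-_\mathcal{F}\vdash u:(G\r F)$ and $\QLP^-_\mathcal{F}\vdash v:G$; instantiating jK as $u:(G\r F)\r(v:G\r (u\cdot v):F)$ and applying MP twice yields $\QLP^-_\mathcal{F}\vdash (u\cdot v):F$, so I set $t:=u\cdot v$.

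The case I expect to require the most care is AN, where $F$ has the form $f(x_1,\ldots,x_n):A$ for an axiom instance $A$. Here I cannot mimic the IAN step of Lemma~\ref{lem:lifting lemma}, since the definition of an axiomatically appropriate primitive term specification only guarantees justifications for axiom instances and does not ask $\mathcal{F}$ to be closed under a further layer of justification. Instead I invoke j4: from $\QLP^-_\mathcal{F}\vdash f(x_1,\ldots,x_n):A$ (which is $F$ itself) together with the instance $f(x_1,\ldots,x_n):A\r\,!f(x_1,\ldots,x_n):f(x_1,\ldots,x_n):A$ of j4, one application of MP gives $\QLP^-_\mathcal{F}\vdash\,!f(x_1,\ldots,x_n):F$, so $t:=\,!f(x_1,\ldots,x_n)$ works.

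Finally I would stress why the Gen-free hypothesis is indispensable. Internalizing a step that passes from $A$ to $(\forall x)A$ would require manufacturing a term justifying $(\forall x)A$ out of a term justifying $A$, which is exactly the work done by the uniform verifier $(t\forall x)$ and the uniformity formula UF in the proof of Lemma~\ref{lemma:Internalization Lemma QLP}; neither is available in $\QLP^-$. Thus the only genuine obstacle, the AN case, is cleared by j4, while the restriction to Gen-free derivations precisely avoids the one rule that cannot be internalized in $\QLP^-$.
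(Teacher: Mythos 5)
Your proof is correct, and it follows the same template the paper itself relies on: the paper states this lemma without proof, implicitly deferring to the induction used for the Lifting Lemma (Lemma~\ref{lem:lifting lemma}) and the Internalization Lemma for $\QLP$ (Lemma~\ref{lemma:Internalization Lemma QLP}), and your axiom and MP cases are exactly those of that template. Where you add genuine value is the AN case: you correctly observe that the notion of axiomatically appropriate \emph{primitive term specification} for $\QLP$/$\QLP^-$ lacks the closure clause that the propositional notion of axiomatically appropriate constant specification has (``if $F\in\CS$ then $c:F\in\CS$''), so the IAN-style move of the Lifting Lemma is unavailable, and you replace it by the j4 step $f(\bar{x}):A\r\,!f(\bar{x}):f(\bar{x}):A$ --- which is available since $\QLP^-$ contains j4. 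This is the standard Artemov-style treatment of internalizing constant/primitive-term introduction in logics with the $!$ operator, and it is precisely the detail one must supply to make the paper's ``similar to Lemma~\ref{lem:lifting lemma}'' gloss literally go through; your closing remark about why Gen is the one rule that cannot be internalized in $\QLP^-$ (absent UF and the uniform verifier) also matches the paper's own discussion following Lemma~\ref{lemma:Internalization Lemma QLP}. No gaps.
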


Fitting \cite{Fitting2008} gives a translation from propositional modal logic {\sf S4} into $\QLP$ as follows.
\begin{definition}\label{def:translation exists}
The mapping $\exists$ from $\s$-formulas into $\QLP$-formulas is defined  as follows:  $p^\exists=p$, $\bot^\exists=\bot$, $\exists$ commutes with propositional connectives,  $(\b A)^\exists= (\exists x) x:A^\exists$.
For a set $S$ of modal formulas, let $S^\exists=\{F^\exists~|~F\in S\}$.
\end{definition}

\begin{theorem}[\cite{Fitting2008}]
For every $\s$-formula $A$,
\[\s\vdash A\qquad \Leftrightarrow\qquad \QLP\vdash A^\exists\]
\end{theorem}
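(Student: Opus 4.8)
The plan is to prove the equivalence $\s\vdash A \Leftrightarrow \QLP\vdash A^\exists$ by establishing each direction separately, treating the soundness direction ($\Rightarrow$) as a straightforward induction and reserving the main effort for the completeness/faithfulness direction ($\Leftarrow$).

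For the forward direction, I would proceed by induction on the length of an $\s$-proof of $A$. The key observation is that the translation $\exists$ sends $\b B$ to $(\exists x)\,x{:}B^\exists$, so I must check that each $\s$-axiom translates to a $\QLP$-theorem and that the rules are preserved. The propositional tautologies are immediate since $\exists$ commutes with connectives. For axiom $K$, namely $\b(B\r C)\r(\b B\r \b C)$, the translation becomes $(\exists x)\,x{:}(B^\exists\r C^\exists)\r((\exists y)\,y{:}B^\exists\r(\exists z)\,z{:}C^\exists)$; this follows in $\QLP$ from axiom $jK$ together with the quantifier axioms $Q1,Q3,Q4$ to strip and reintroduce the existential quantifiers. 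For axiom $jT$ the translation of $\b B\r B$ is handled by $jT$ under $Q1$; for axiom $4$ one uses $j4$ similarly. The rule $MP$ transfers directly. The rule $Nec$ is the interesting case: from $\s\vdash B$ I have by induction $\QLP\vdash B^\exists$, and I must derive $(\exists x)\,x{:}B^\exists$, which is exactly what qNec (equivalently, the Internalization Lemma \ref{lemma:Internalization Lemma QLP}) delivers, provided $\mathcal F$ is axiomatically appropriate.

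For the backward direction, the natural strategy is to compose $\exists$ with the forgetful projection $\circ$ of Definition \ref{def: forgetful projection}. The point is that for any $\s$-formula $A$, the formula $(A^\exists)^\circ$ should be provably equivalent in $\s$ to $A$ itself: indeed $\circ$ collapses every $(\exists x)\,x{:}B$ back to $\b(B^\circ)$, so $(A^\exists)^\circ$ and $A$ coincide up to the $\s$-provable identifications. Then, given $\QLP\vdash A^\exists$, I would apply the Realization/forgetful-projection machinery to conclude that $\s = \LP^\circ$ (Theorem \ref{thm: realization of S4-LP}) proves $(A^\exists)^\circ$, and hence $\s\vdash A$. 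This requires a lemma of the shape ``if $\QLP_\mathcal F\vdash F$ then $\s\vdash F^\circ$," proved by induction on the $\QLP$-derivation, where the existential quantifier and the uniformity formula UF must be shown to project soundly into $\s$ (the UF axiom projects to a valid $\s$-implication after $\circ$ erases the quantifier structure).

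The hard part will be the backward direction, specifically verifying that the forgetful projection of a full $\QLP$-derivation lands in $\s$: the quantifier axioms $Q1$--$Q4$, the Generalization rule, and above all the uniformity formula UF and the rule qNec must each be checked to project to $\s$-theorems or admissible $\s$-rules, and one must confirm that $(A^\exists)^\circ$ is genuinely $\s$-equivalent to $A$ rather than merely syntactically related. I expect the UF axiom to be the most delicate, since its quantifier bindings over justification variables have no direct modal counterpart and only become trivial once $\circ$ discards the term structure; care is needed to ensure the projected formula is actually $\s$-provable and that the variable-occurrence side conditions do not obstruct the argument.
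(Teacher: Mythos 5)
Your proposal is correct in substance, but it does not match any proof in the paper, because the paper gives none: the theorem is quoted from Fitting \cite{Fitting2008}. Measured against Fitting's original argument, your forward direction is the standard one (induction on $\s$-proofs, with qNec or the Internalization Lemma \ref{lemma:Internalization Lemma QLP} discharging Nec), with one small slip: the translation of axiom T needs Gen and Q4 --- to pass from $x:A^\exists\rightarrow A^\exists$ to $(\exists x)x:A^\exists\rightarrow A^\exists$ --- not Q1. For the hard direction you take a genuinely different route. Fitting establishes the converse semantically: if $\s\not\vdash A$, an {\sf S4} Kripke countermodel is expanded into a possible-world model of $\QLP$ by equipping it with a maximal evidence function, so that $(\exists x)x:B$ evaluates exactly as $\Box B$, and soundness of $\QLP$ for such models then yields $\QLP\not\vdash A^\exists$; the M-models of Appendix \ref{Appendix:Semantics of QLP} are single-world versions of precisely this machinery. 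Your route is instead purely syntactic: extend $\circ$ to all of $\QLP$ by erasing quantifiers, i.e.\ $((\forall x)B)^\circ=((\exists x)B)^\circ=B^\circ$ and $(t:B)^\circ=\Box B^\circ$, prove by induction on derivations that $\QLP\vdash F$ implies $\s\vdash F^\circ$, and observe that $(A^\exists)^\circ$ is then literally $A$ (syntactic identity, not merely $\s$-equivalence). This does work: since justification variables occur in formulas only inside terms, $(B[t/x])^\circ=B^\circ$, so Q1--Q4 project to tautologies; Gen becomes vacuous, AN and qNec become Nec, and UF projects to $\Box\Box B^\circ\rightarrow\Box B^\circ$, already a theorem of {\sf T} --- so the step you feared most is indeed harmless. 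One correction: your appeal to the Realization Theorem \ref{thm: realization of S4-LP} is a red herring; nothing about $\LP$-realization enters, and the projection lemma you state immediately afterwards is the entire content of that direction. As for what each approach buys: Fitting's semantic argument comes nearly for free once soundness of $\QLP$ is in place (and that soundness is wanted anyway, cf.\ Appendix \ref{Appendix:Semantics of QLP}), whereas your projection argument is elementary and finitary, and exhibits the collapse of $\QLP$ onto $\s$ as the exact quantified analogue of the forgetful direction of Theorem \ref{thm: realization of S4-LP}.
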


The same correspondence, $\Box A = (\exists x) x:A$, is considered by Yavorsky in \cite{Yavorsky2001} for his quantified logic of proofs {\sf qLP}.
%

If we interpret the modality $\b$ as knowledge (i.e. $\b F$ read as ``$F$ is known"), then the translation $\b A \rightleftharpoons (\exists x) x:A$ gives the following (related) interpretations of knowledge:

\begin{enumerate}
\item Proof-based interpretation of knowledge, where knowledge of $A$  means ``there is a formal proof for $A$" or ``$A$ is provable" (indeed this interpretation is provided by Yavorsky's {\sf qLP}).
\item Evidence-based interpretation of knowledge, where knowledge of $A$  means ``there is an evidence (or justification) for $A$" (indeed this interpretation is provided by Fitting's $\QLP$).
\end{enumerate}

Although the evidence-based interpretation of knowledge satisfies the principle of knowledge veracity, $(\exists x) x:\varphi\r\varphi$, the proof-based interpretation does not. In fact, since mathematical proofs can be considered as certain evidences, proof-based knowledge implies evidence-based knowledge, but not vice versa. Note also that having evidence-based (or proof-based) knowledge of a statement is only a sufficient condition for having knowledge of that statement.
\subsection{Fixed point extensions of \QLP}

Next let us turn to the fixed point extension of $\QLP$. If we define the fixed point extension of $\QLP$ as the one for propositional justification logics in Section \ref{sec:JL(FP)} (i.e. fixed point axioms are defined for formulas with justified occurrences of propositional variables), then Theorem \ref{thm: Knower Paradox-justification case} already shows that this fixed point extension is inconsistent. According to Definition \ref{def:translation exists}, it is natural to define fixed point axioms  for formulas with boxed occurrences of propositional variables (this is similar to one defined for fixed point extensions of modal logics in Section \ref{sec:ML(FP)}).

The propositional variable $p$ is called \textit{$\exists$-justified} in the $\QLP$-formula $A(p,\bar{q})$ if each occurrences of $p$ in $A(p,\bar{q})$ is in the scope of $(\exists x)x:...$, for some variable $x$. To put it otherwise, if all occurrences of $(\exists x)x:...$, for some variable $x$, in the $\QLP$-formula $A(p,\bar{q})$ is replaced by $\b$, then $p$ is $\exists$-justified in $A(p,\bar{q})$ if all occurrences of $p$ are in the scope of $\b$ in $A(p,\bar{q})$. For example, $p$ is $\exists$-justified in the following formula

\[ A(p) = (\exists x) x:p \vee (\exists y) y:\neg p.\]

Now extend the language of $\QLP$ by fixed point operators $\delta_A(\bar{q})$, for each $\QLP$-formula $A(p,\bar{q})$ in which $p$ is $\exists$-justified. The fixed point extension of $\QLP$, denoted by ${\sf QLP(FP)}$ is obtained by adding the fixed point axioms:
\[ \delta_A(\bar{B}) \leftrightarrow A(\delta_A(\bar{B}),\bar{B}) \]
where $p$ is $\exists$-justified in $A(p,\bar{q})$, and $\bar{B}$ is a list of ${\sf QLP(FP)}$-formulas.

In the rest of this section it is useful to consider intermediate systems between $\QLP$ and ${\sf QLP(FP)}$. Given the $\QLP$-formula $A(p,\bar{q})$ in which $p$ is $\exists$-justified, first extend the language of $\QLP$ by single fixed point operator $\delta_A(\bar{q})$, and then define the logic
$$\QLP(\delta_A(\bar{B}) \leftrightarrow A(\delta_A(\bar{B}),\bar{B}))$$
 to be the extension of $\QLP$ with single fixed point axiom $\delta_A(\bar{B}) \leftrightarrow A(\delta_A(\bar{B}),\bar{B})$. This notion is useful when we are dealing with the extension of $\QLP$ with a single particular fixed point axiom.

The definition of primitive term specification, ${\sf QLP(FP)}_\mathcal{F}$, and ${\sf QLP}(F)_\mathcal{F}$, for fixed point axiom $F$, is similar to those of $\QLP$. All the definitions given above can be stated for $\QLP^-$ instead of $\QLP$ as well.
\subsection{The Knower Paradox in \QLP}
Using the idea of evidence-based interpretation of knowledge, the Knower Paradox was  redeveloped in \cite{Arlo-CostaKishida2009,Dean2014,DeanKurokawa2014} within Fitting's quantified logic of proofs {\sf QLP}. The Knower Paradox
\[D \leftrightarrow \neg K(\gn{D}) \quad \mbox{or} \quad D \leftrightarrow \neg \b D,\]
 is expressible in $\QLP$ by the formula:
\[D \leftrightarrow \neg (\exists x)x:D. \]
We give the proof of the Knower in a fixed point extension of $\QLP$.\footnote{In contrast to the proofs of the Knower paradox in $\QLP$ given in  \cite{Dean2014,DeanKurokawa2009,DeanKurokawa2014}, the proof of Theorem \ref{thm:Knower Paradox in QLP} (similar to the original proof of the Knower paradox in Theorem \ref{thm:Knower, simple case}) is expressed without any hypothesis.}

\begin{theorem} [The Knower Paradox in \QLP]\label{thm:Knower Paradox in QLP}
  Let $\delta$ be the fixed point operator of the formula $A(p)=\neg (\exists x)x:p$. Then
\[ \QLP(\delta \leftrightarrow \neg (\exists x)x:\delta)_\emptyset\]
 is inconsistent.
\end{theorem}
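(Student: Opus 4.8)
The plan is to mimic the proof of the propositional Knower paradox (Theorem \ref{thm: Knower Paradox-justification case}), but to replace the single-variable justification $x:\delta$ by the existential justification $(\exists x)x:\delta$, and to supply the two ``engine'' steps — internalization and the substitution of a proved formula under a justification — using the machinery available in $\QLP$ rather than in a propositional justification logic. The fixed point axiom in force is $\delta \leftrightarrow \neg(\exists x)x:\delta$, and the goal is to derive $\bot$ in $\QLP(\delta \leftrightarrow \neg(\exists x)x:\delta)_\emptyset$.

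First I would use the fixed point axiom to get, by propositional reasoning, both $(\exists x)x:\delta \r \neg\delta$ and its contrapositive $\delta \r \neg(\exists x)x:\delta$. Next I would invoke the justification veracity principle: from the $\QLP$ axiom jT, $x:\delta \r \delta$, applying Gen and the quantifier axiom Q4 (since $x$ does not occur free in $\delta$) yields $(\exists x)x:\delta \r \delta$. Combining this with $(\exists x)x:\delta \r \neg\delta$ gives $\neg(\exists x)x:\delta$, and then the fixed point axiom delivers $\delta$. This is the exact analogue of steps 1--5 in Theorem \ref{thm:Knower, simple case}, with $(\exists x)x:\delta$ playing the role of $K(\gn{\delta})$ and the derived implication $(\exists x)x:\delta \r \delta$ playing the role of the rule T applied to $\delta$.

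To close the contradiction I need a Nec-like step: having proved $\delta$, I want to conclude $(\exists x)x:\delta$, which directly contradicts $\neg(\exists x)x:\delta$. Here the Internalization Lemma for $\QLP$ (Lemma \ref{lemma:Internalization Lemma QLP}) is the key tool — but note it requires an \emph{axiomatically appropriate} primitive term specification, whereas the theorem is stated for the \emph{empty} specification $\mathcal{F}=\emptyset$. So the plan is instead to use the rule qNec: since $\delta$ has been proved and the bound variable $x$ does not occur free in $\delta$, qNec gives $(\exists x)x:\delta$ outright, without needing any primitive term specification. Together with $\neg(\exists x)x:\delta$ from the previous paragraph this yields $\bot$.

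The main obstacle, and the reason this deserves care, is the restriction to $\mathcal{F}=\emptyset$: the proof must avoid any appeal to internalization or to the Axiom Necessitation rule AN producing new proof terms, since with no primitive term specification those are unavailable. The footnote to the theorem emphasizes exactly this point — the proof proceeds ``without any hypothesis.'' I expect qNec to be precisely what rescues the argument, because it is a primitive rule of $\QLP$ that does not consume any specification and that exactly internalizes a proved formula into the $\exists$-justified form $(\exists x)x:\delta$ appearing in the fixed point. I would double-check only that $x$ is genuinely not free in $\delta$ (it is a sentence, so this holds) so that the side condition on qNec is satisfied.
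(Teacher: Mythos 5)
Your proof is correct, and up to the derivation of $\neg(\exists x)x:\delta$ and then $\delta$ (fixed point axiom, jT, Gen, Q4, propositional reasoning) it coincides step for step with the paper's own proof. The two arguments part ways only at the closing move. The paper derives $t:\delta$ from $\delta$ by the Internalization Lemma (Lemma \ref{lemma:Internalization Lemma QLP}) and then obtains $(\exists x)x:\delta$ via axiom Q3, whereas you apply the rule qNec directly to the proved sentence $\delta$. Your choice is in fact the more careful one for the theorem as stated: the Internalization Lemma requires an \emph{axiomatically appropriate} primitive term specification, and $\mathcal{F}=\emptyset$ is not axiomatically appropriate, so the paper's appeal to that lemma in its step 10 is not literally licensed under the empty specification; the paper itself patches this in the remark immediately following its proof, observing that $(\exists x)x:\delta$ can be obtained directly from $\delta$ by qNec --- which is exactly your step. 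So what qNec buys you is independence from any primitive term specification, which is precisely what the restriction to $\QLP(\delta \leftrightarrow \neg(\exists x)x:\delta)_\emptyset$ demands, and your side-condition check (that $x$ is not free in $\delta$, a $0$-ary fixed point operator) is the right one.
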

\begin{proof}
Recall that $\QLP(\delta \leftrightarrow \neg (\exists x)x:\delta)_\emptyset$ does not contain rule AN.
 \begin{enumerate}
 \item $\delta \leftrightarrow \neg (\exists x)x:\delta$, fixed point axiom
  \item $\neg(\exists x)x:\delta \r  \delta$, from 1 by propositional reasoning
 \item $(\exists x)x:\delta \r \neg \delta$, from 1 by propositional reasoning
 \item $x:\delta \r \delta$, an instance of axiom jT
 \item $(\forall x)(x:\delta \r \delta)$, from 4 by Gen
 \item $(\forall x)(x:\delta \r \delta) \r ((\exists x) x:\delta \r \delta)$, an instance of axiom Q4
 \item $(\exists x) x:\delta \r \delta$, from 5, 6 by MP
 \item $\neg(\exists x) x:\delta$, from 3, 7 by propositional reasoning
 \item $\delta$, from 2, 8 by MP
 \item $t:\delta$, from 9 by the internalization lemma (Lemma \ref{lemma:Internalization Lemma QLP})
 \item $t:\delta \r (\exists x) x:\delta$, an instance of axiom Q3
\item $(\exists x) x:\delta$, from 10, 11 by MP
 \item $\bot$, from 8, 12.\qed
 \end{enumerate}
\end{proof}

Note that the above proof cannot be proceeded in $\QLP^-$, since we use the internalization lemma (in step 10) on a provable sentence $\delta$ which uses Gen (in step 5) in its proof. Moreover, the formula $(\exists x) x:\delta$ in step 12 can be directly derived from $\delta$ (in step 9) by qNec.

\begin{theorem} [The Knower Paradox in $\QLP^-$]\label{thm:Knower Paradox in QLP^-}
  The logic
\[ \QLP^-(\delta \leftrightarrow \neg (\exists x)x:\delta)_\emptyset\]
 is consistent.
\end{theorem}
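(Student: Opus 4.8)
The plan is to establish consistency semantically, by exhibiting a single-world model of $\QLP^-(\delta \leftrightarrow \neg (\exists x)x:\delta)_\emptyset$ in which $\bot$ fails. The structural fact I would exploit is that dropping both the rule AN (which is exactly what the subscript $\emptyset$ removes) and the rule qNec (which is absent from $\QLP^-$ altogether) leaves only Modus Ponens and Generalization, and neither of these can force a justification assertion $t:F$ to become true. I would therefore take the simplest possible evidence assignment: a single world $w$ together with an evidence function under which no term justifies any formula, so that every assertion of the form $t:F$ is false at $w$.

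In this model the interpretation of $(\exists x)x:F$ is false for every $F$, since the existential quantifier ranges over justification terms and none of them justifies anything at $w$. Consequently I would set the truth value of the closed fixed point formula $\delta$ to be true, so that the biconditional $\delta \leftrightarrow \neg(\exists x)x:\delta$ relates two true statements and the fixed point axiom holds at $w$; the remaining valuation on the propositional variables may be chosen arbitrarily. The bulk of the verification is then routine: jK, jT, j4 and Sum all hold vacuously because their antecedents are justification assertions, hence false at $w$; the quantifier axioms Q1--Q4 together with all propositional tautologies are valid in any first-order structure; and Modus Ponens and Generalization preserve truth at $w$. Since the primitive term specification is empty, there is no obligation that any primitive term justify an axiom, so the empty evidence function is a legitimate model. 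Every theorem of $\QLP^-(\delta \leftrightarrow \neg (\exists x)x:\delta)_\emptyset$ is thus true at $w$, and as $\bot$ is false at $w$ it cannot be a theorem, which yields consistency.

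The step I expect to carry the real content — as opposed to the routine axiom-checking — is the contrast with Theorem \ref{thm:Knower Paradox in QLP}. There the contradiction arises solely because qNec (equivalently, the internalization lemma applied to the Gen-dependent proof of $\delta$) converts the provable $\delta$ into $(\exists x)x:\delta$, after which jT and the fixed point axiom collide. In $\QLP^-_\emptyset$ this passage is blocked: internalization in $\QLP^-$ is available only for derivations that do not use Gen, and the proof of $\delta$ essentially requires Gen, while qNec is simply not a rule of the system. My empty-evidence model is precisely the witness that nothing else resurrects $(\exists x)x:\delta$. The one point at which I would take care is to confirm that no admissible rule of $\QLP^-_\emptyset$ covertly reintroduces a necessitation principle; once that is checked, the trivial model settles consistency.
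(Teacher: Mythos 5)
Your proposal is correct and takes essentially the same route as the paper: the paper's proof (Appendix on semantics of $\QLP$) also constructs a single-world Mkrtychev-style model with the empty evidence function ($\E(r,v)=\emptyset$ for every reason $r$ and valuation $v$) and $\V(\delta)=1$, so that every assertion $t:\delta$ is false, the fixed point axiom $\delta \leftrightarrow \neg(\exists x)x:\delta$ is valid, and consistency follows from the soundness theorem for $\QLP^-_\emptyset$ extended by that single axiom. The only cosmetic difference is that in the paper's semantics the quantifier ranges over a domain of reasons rather than over syntactic terms, which does not affect your argument.
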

\begin{proof}
See Appendix \ref{Appendix:Semantics of QLP}.\qed
\end{proof}

The Knower Paradox was also studied in \cite{DeanKurokawa2009} in the framework of Fitting's quantified logic of proofs from \cite{Fitting2004,Fitting2006}. The quantified logic of proofs presented in \cite{Fitting2004,Fitting2006} has the same language of $\QLP$  but instead of axiom UF and rule qNec it contains the following axiom (called \textit{Uniform Barcan Formula}):
 $$(\forall x)t:A(x) \r (t\forall x):(\forall x) A(x),$$
 where $x$ does not occur free in $t$. Dean and Kurokawa presented some arguments against this axiom, and suggest to resolve the Knower Paradox by abandoning it.

Theorem \ref{thm:Knower Paradox in QLP} implies that ${\sf QLP(FP)}$ is inconsistent.

\begin{corollary}
${\sf QLP(FP)}_\emptyset$ is inconsistent.
\end{corollary}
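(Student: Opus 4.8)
The plan is to observe that the single fixed point axiom driving the inconsistency in Theorem \ref{thm:Knower Paradox in QLP} is already among the axioms of ${\sf QLP(FP)}_\emptyset$, so the derivation of $\bot$ transfers verbatim. First I would verify the eligibility condition: in the formula $A(p) = \neg (\exists x)x:p$ the sole occurrence of $p$ lies within the scope of $(\exists x)x:\ldots$, so replacing that prefix by $\b$ places $p$ under a box. Hence $p$ is $\exists$-justified in $A(p)$, which means the language of ${\sf QLP(FP)}$ contains the fixed point operator $\delta_A$ and the axiom system ${\sf QLP(FP)}_\emptyset$ contains the corresponding fixed point axiom $\delta_A \leftrightarrow \neg (\exists x)x:\delta_A$.

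Next I would compare the two systems directly. Both ${\sf QLP(FP)}_\emptyset$ and $\QLP(\delta \leftrightarrow \neg (\exists x)x:\delta)_\emptyset$ are built on the same base axioms and rules of $\QLP$, and in each the empty primitive term specification forbids the rule AN from producing any new formula. The only difference is that ${\sf QLP(FP)}_\emptyset$ carries all fixed point axioms rather than the single one for $A$. Since this single axiom is, by the previous paragraph, one of the axioms of ${\sf QLP(FP)}_\emptyset$, every rule instance and axiom instance used in a derivation of $\QLP(\delta \leftrightarrow \neg (\exists x)x:\delta)_\emptyset$ is available in ${\sf QLP(FP)}_\emptyset$. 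Therefore any formula provable in $\QLP(\delta \leftrightarrow \neg (\exists x)x:\delta)_\emptyset$ is provable in ${\sf QLP(FP)}_\emptyset$.

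Finally, Theorem \ref{thm:Knower Paradox in QLP} gives $\QLP(\delta \leftrightarrow \neg (\exists x)x:\delta)_\emptyset \vdash \bot$, and by the inclusion of derivations just noted we conclude ${\sf QLP(FP)}_\emptyset \vdash \bot$, i.e.\ ${\sf QLP(FP)}_\emptyset$ is inconsistent. There is no genuine obstacle in this argument; the only point that deserves explicit attention is checking the $\exists$-justification of $p$ in $A(p)$, since it is precisely this condition that guarantees the needed fixed point axiom is present in ${\sf QLP(FP)}_\emptyset$ and lets the Knower derivation be imported unchanged.
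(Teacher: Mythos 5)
Your proof is correct and matches the paper's own (implicit) argument: the paper derives this corollary directly from Theorem \ref{thm:Knower Paradox in QLP}, exactly as you do, since the fixed point axiom $\delta \leftrightarrow \neg(\exists x)x{:}\delta$ for the $\exists$-justified formula $A(p)=\neg(\exists x)x{:}p$ is among the axioms of ${\sf QLP(FP)}_\emptyset$, so the Knower derivation of $\bot$ carries over verbatim. Your explicit check of the $\exists$-justification condition is a sound piece of diligence that the paper leaves tacit.
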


Dean and Kurokawa in \cite{Dean2014,DeanKurokawa2009} give an arithmetical interpretation for $\QLP^-$, and show the arithmetic soundness of $\QLP^-$: the interpretation of every formula provable in $\QLP^-$ is true in the standard model of arithmetic. Nonetheless, it is not clear how fixed point operators of ${\sf QLP^-(FP)}$ are related to fixed point sentences of $\PA$. Having shown this connection, it is perhaps possible to prove that  ${\sf QLP^-(FP)}$ is consistent. We leave the details for future work.

\subsection{The Surprise Test Paradox in \QLP}\label{sec:The Surprise Test Paradaox in QLP}
In this section we analyze the Surprise Test Paradox in the framework of $\QLP$, when we have an evidence-based interpretation of knowledge in mind.

Since the test is supposed to be surprise for students (and not for non-students), it is helpful to consider a multi-agent version of $\QLP$. Suppose $\mathcal{A}=\{1,2,\ldots,n\}$ is a set of agents. The language of multi-agent quantified logic of proofs $\QLP_n$ is similar to $\QLP$ with the difference that formulas are constructed by the following grammar:
\[ A::= p~|~\bot~|~\neg A~|~A\wedge A~|~A\vee A~|~A\rightarrow A~|~t:_s A~|~(\forall x)A~|~(\exists x) A,\]
where $s\in \mathcal{A}$. The justification assertion $t:_s A$ is read ``the agent $s$ considers $t$ as a justification (or reason) for $A$." Axioms and rules of $\QLP_n$ are those of $\QLP$ where $``:"$ is replaced by $``:_s"$ everywhere, for arbitrary agent $s$ in $\mathcal{A}$. One can show that $\QLP_n$ is consistent, by giving a translation from the language of $\QLP_n$ to the language of $\QLP$ that maps $t:_s A$ into $t:A$.

Now it is natural to interpret a surprise statement $A$ as a statement for which there is no justification or reason. This can be expressed in $\QLP$ as $\neg (\exists x)x: A$. More precisely,

\begin{center}
{``$A$ is a surprise for $s$" $\rightleftharpoons$ ``$s$ has no reason for $A$" $\rightleftharpoons$ ``$\neg (\exists x)x:_s A$"}
\end{center}

The above definition of surprise is not claimed to be an exceptionless definition for a surprise event in everyday life; rather, it seems it is  natural in the context of this paradox, i.e., a test is surprise for a student if she does not know based on any evidence the date of the test.

To keep the notation simple, we consider a class with only one student $s$ and use  $``:"$ instead of $``:_s"$. In the sequel we formalize various versions of the Surprise Test Paradox in $\QLP$ and its fixed point extensions.

First consider the Kaplan-Montague  self-reference one-day case  of the paradox, the Examiner Paradox:
\begin{quote}
 ``Unless you know this statement to be false, you will have a test tomorrow, but you can't know from this statement that you will have a test tomorrow."
\end{quote}
This sentence was formalized in Section \ref{sec:Fixed points in arithmetic} as follows:

\begin{equation}\label{eq:Self-reference Examiner Paradox again}
D \leftrightarrow ( K(\gn{\neg  D})\vee( E\wedge \neg K(\gn{D\r E}))),
\end{equation}

  where $E$ denotes the sentence ``you will have a test tomorrow." The sentence (\ref{eq:Self-reference Examiner Paradox again}) is expressed in ${\sf QLP(FP)}$ as follows:

\[\delta \leftrightarrow [ (\exists x)x:\neg \delta\vee( E\wedge \neg (\exists x)x:(\delta\r E))]\]

where $\delta=\delta_A(E)$ is the fixed point operator of the formula

$$A(p,E)=(\exists x)x:\neg p\vee( E\wedge \neg (\exists x)x:(p\r E)).$$

We show that the Examiner Paradox leads to a contradiction in $\QLP$.

  \begin{theorem} [The Examiner Paradox in \QLP]\label{thm:The Examiner Paradox in QLP}
  The logic
\[\QLP(\delta \leftrightarrow [ (\exists x)x:\neg \delta\vee( E\wedge \neg (\exists x)x:(\delta\r E))])_\emptyset\]
is inconsistent.
 \end{theorem}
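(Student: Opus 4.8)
The plan is to mirror the arithmetical derivation of Theorem~\ref{thm:Examiner paradox in PA}, translating the knowledge predicate $K(\gn{\cdot})$ into the existential justification operator $(\exists x)x:(\cdot)$, the veracity principle T into the justification axiom jT, and the necessitation rule Nec into the Quantified Necessitation rule qNec. Writing $A(\delta)$ for the right-hand side $(\exists x)x:\neg\delta\vee(E\wedge\neg(\exists x)x:(\delta\r E))$, every formula in the derivation will be a \emph{theorem} of $\QLP(\delta\leftrightarrow A(\delta))_\emptyset$, so qNec may be applied to any of them. Note that $\QLP_\emptyset$ drops only AN and still retains MP, Gen, and qNec, which is exactly what makes this argument go through.

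First I would establish the analog of the veracity step. From the jT instance $x:\neg\delta\r\neg\delta$, applying Gen gives $(\forall x)(x:\neg\delta\r\neg\delta)$; combining this with the Q4 instance $(\forall x)(x:\neg\delta\r\neg\delta)\r((\exists x)x:\neg\delta\r\neg\delta)$ yields $(\exists x)x:\neg\delta\r\neg\delta$ by MP, and contraposition gives $\delta\r\neg(\exists x)x:\neg\delta$. This is precisely the move used in the proof of Theorem~\ref{thm:Knower Paradox in QLP} (its steps~4--7), and it is the one place where the quantifier apparatus, rather than a bare schema, is genuinely required.

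Next, feeding $\delta\r\neg(\exists x)x:\neg\delta$ into the fixed point axiom $\delta\leftrightarrow A(\delta)$ rules out the first disjunct, so propositional reasoning forces $\delta\r(E\wedge\neg(\exists x)x:(\delta\r E))$; from this I extract both $\delta\r E$ and $\delta\r\neg(\exists x)x:(\delta\r E)$. Applying qNec to the theorem $\delta\r E$ produces $(\exists x)x:(\delta\r E)$, which together with $\delta\r\neg(\exists x)x:(\delta\r E)$ gives $\neg\delta$. Then qNec applied to $\neg\delta$ yields $(\exists x)x:\neg\delta$, and feeding this back into the fixed point axiom (now the first disjunct is satisfied) delivers $\delta$. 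The pair $\delta$ and $\neg\delta$ gives $\bot$.

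The main obstacle, exactly as in the Knower case (Theorem~\ref{thm:Knower Paradox in QLP}), is that qNec, not the full internalization lemma, must be the engine: internalization (Lemma~\ref{lemma:Internalization Lemma QLP}) is available only for an axiomatically appropriate $\mathcal{F}$, whereas here $\mathcal{F}=\emptyset$. The two qNec steps are legitimate because $\delta\r E$ and $\neg\delta$ contain no free justification variable, so the freeness side-condition of qNec is automatically met; the derivation of $(\exists x)x:\neg\delta\r\neg\delta$ from jT via Gen and Q4 is the only non-schematic point requiring care, and everything else is propositional bookkeeping around the fixed point biconditional.
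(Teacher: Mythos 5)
Your proposal is correct and follows essentially the same derivation as the paper's own proof: jT + Gen + Q4 to get $(\exists x)x:\neg\delta\r\neg\delta$, propositional reasoning with the fixed point axiom to isolate $\delta\r E$ and $\delta\r\neg(\exists x)x:(\delta\r E)$, then two applications of qNec (to $\delta\r E$ and to $\neg\delta$) to reach $\bot$. Your observation that qNec rather than internalization must carry the argument (since $\mathcal{F}=\emptyset$ blocks Lemma~\ref{lemma:Internalization Lemma QLP}) matches the paper's own remark that the proof cannot proceed in $\QLP^-$ precisely because of these qNec steps.
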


\begin{proof}
\begin{enumerate}
\item $\delta \leftrightarrow [ (\exists x)x:\neg \delta\vee( E\wedge \neg (\exists x)x:(\delta\r E))]$, fixed point axiom
\item $x:\neg\delta \r \neg\delta$, an instance of axiom jT
 \item $(\forall x)(x:\neg\delta \r \neg\delta)$, from 2 by Gen
 \item $(\forall x)(x:\neg\delta \r \neg\delta) \r ((\exists x) x:\neg\delta \r \neg\delta)$, an instance of axiom Q4
 \item $(\exists x)x:\neg \delta \r\neg\delta$, from 3, 4 by MP
\item $\delta\r\neg(\exists x)x:\neg \delta$, from 5 by propositional reasoning
\item $\delta\r E\wedge\neg(\exists x)x:(\delta\r E)$, from 1, 6 by propositional reasoning
\item $\delta\r E$, from 7 by propositional reasoning
\item $\delta\r\neg(\exists x)x:(\delta\r E)$, from 7 by propositional reasoning
\item $(\exists x)x:(\delta\r E)\r\neg\delta$, from 9 by propositional reasoning
\item $(\exists x) x:(\delta\r E)$, from 8 by qNec
\item $\neg \delta$, from 10, 11 by MP
\item $(\exists x)x:\neg \delta$, from 12 by qNec
\item $\delta$, from 1, 13 by propositional reasoning
\item $\bot$, from 12, 14.\qed
\end{enumerate}
\end{proof}

Note that the above proof cannot be proceeded in $\QLP^-$, since we use the rule qNec (in steps 11 and 13). Moreover, It also give another proof for the inconsistency of ${\sf QLP(FP)}$.

 \begin{theorem} [The Examiner Paradox in $\QLP^-$]\label{thm:The Examiner Paradox in QLP^-}
 The logic
  \[\QLP^-(\delta \leftrightarrow [ (\exists x)x:\neg \delta\vee( E\wedge \neg (\exists x)x:(\delta\r E))])_\emptyset\]
  is consistent.
\end{theorem}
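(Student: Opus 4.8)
The plan is to prove consistency semantically, by producing a model of the single-world Kripke kind developed for $\QLP^-$ in Appendix~\ref{Appendix:Semantics of QLP} (the same machinery invoked for Theorem~\ref{thm:Knower Paradox in QLP^-}) that simultaneously validates every axiom and rule of $\QLP^-$ together with the single fixed point axiom, yet falsifies $\bot$. By soundness of $\QLP^-_\emptyset$ with respect to this semantics, no derivation of $\bot$ can exist, so the logic is consistent. The whole point is that $\QLP^-$ lacks the rule qNec (and, under the constant specification $\emptyset$, the rule AN), and it was precisely qNec that drove the contradiction in Theorem~\ref{thm:The Examiner Paradox in QLP} (steps~11 and~13); removing it leaves us free to build a model in which no justification assertion is forced to hold.

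Concretely, I would take a single world $w$ accessing itself (so the frame is reflexive and transitive, matching jT and j4), a nonempty domain of justification variables, and---crucially---the \emph{empty} evidence function, declaring that no term is admissible evidence for any formula at $w$. Under this choice every justification assertion $t{:}F$ is false at $w$, and hence every quantified assertion $(\exists x)x{:}F$ is false as well. Consequently the justification axioms jK, jT, j4, and Sum all hold vacuously at $w$ (each has a false justification assertion as the relevant antecedent), while the propositional tautologies and the quantifier axioms Q1--Q4 hold by the usual first-order truth conditions. I would then fix the propositional valuation so that both $\delta$ and $E$ are false at $w$. For the fixed point axiom $\delta \leftrightarrow [(\exists x)x{:}\neg\delta \vee (E \wedge \neg(\exists x)x{:}(\delta \r E))]$ this is enough: the first disjunct $(\exists x)x{:}\neg\delta$ is false, and in the second conjunct $E$ is false, so the entire right-hand side is false and matches the false left-hand side, making the biconditional true.

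It remains to check that the inference rules preserve validity: MP and Gen do so by the standard argument, and the Axiom Necessitation rule is restricted to $\mathcal{F}=\emptyset$ and therefore introduces nothing---this is exactly the compatibility between the empty evidence function and the fragment $\QLP^-_\emptyset$. Since $\bot$ is false at $w$, soundness gives $\QLP^-(\ldots)_\emptyset \nvdash \bot$, i.e.\ consistency. The main obstacle, and really the only point requiring care, is confirming that the empty-evidence single-world structure is a legitimate model---in particular that the closure conditions imposed on the evidence function by the operations $\cdot$, $+$, and $!$ are met. Because those conditions are conditionals whose hypotheses assert the admissibility of some evidence, they are all satisfied vacuously once the evidence function is empty, so this verification should be routine; the genuine content of the argument is simply that ``no justifications exist'' yields a bona fide model in which the fixed point biconditional is satisfiable with both sides false.
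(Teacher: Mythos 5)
Your proposal is correct and takes essentially the same approach as the paper: the paper's Appendix on semantics proves this theorem by exhibiting a single-world Mkrtychev-style M-model with the empty evidence function (so every $t{:}F$, hence every $(\exists x)x{:}F$, is false, and all evidence-closure conditions hold vacuously) and then invoking soundness of $\QLP^-_\emptyset$ extended by a valid fixed point axiom. The only difference is the propositional valuation: the paper sets $\V(\delta)=\V(E)=1$, making the biconditional true with both sides true (a choice that lets the very same model also witness the non-derivability claims $\not\vdash\neg\delta$ of the neighboring theorems), whereas you set both to $0$ so that both sides are false; either choice validates the fixed point axiom, so your argument goes through.
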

\begin{proof}
See Appendix \ref{Appendix:Semantics of QLP}.\qed
\end{proof}

Theorems \ref{thm:The Examiner Paradox in QLP}, \ref{thm:The Examiner Paradox in QLP^-} gives a solution to the Kaplan-Montague self-reference one-day case of the Surprise Test Paradox. The self-reference $n$-day case is similar.

Next consider the following self-reference one-day case of the paradox:

\begin{quote}
``You will have a test tomorrow, but you can't know from this statement that you will have a test tomorrow."
\end{quote}

This sentence can be expressed in ${\sf QLP(FP)}$ as follows:

\[\delta \leftrightarrow [  E\wedge \neg (\exists x)x:(\delta\r E)]\]

where $\delta=\delta_A(E)$ is the fixed point operator of the formula

$$A(p,E)= E\wedge \neg (\exists x)x:(p\r E).$$

We show that this announcement can not be fulfilled.
  \begin{theorem}\label{thm:self one-day surprise without unless QLP}
  \[\QLP(\delta \leftrightarrow [  E\wedge \neg (\exists x)x:(\delta\r E)])_\emptyset\vdash \neg\delta\]
 \end{theorem}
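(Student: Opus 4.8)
The plan is to derive $\neg\delta$ directly from the single fixed point axiom in just a handful of lines, following the same shape as the proof of Theorem~\ref{thm:The Examiner Paradox in QLP}. The essential difference is that, with the disjunct $(\exists x)x:\neg \delta$ now absent from the fixed point, I expect to land not on $\bot$ but on the weaker conclusion $\neg\delta$: the announcement cannot be fulfilled, rather than being outright inconsistent.

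First I would record the fixed point axiom
\[\delta \leftrightarrow [E\wedge \neg (\exists x)x:(\delta\r E)]\]
and read off its left-to-right half, which by propositional reasoning splits into the two implications $\delta\r E$ and $\delta\r\neg(\exists x)x:(\delta\r E)$. The first says that $\delta$ forces a test tomorrow, the second that $\delta$ forces the unknowability (from $\delta$) of that very fact.

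The key step is to observe that $\delta\r E$ is itself a theorem of the logic, being a propositional consequence of the fixed point axiom. Since neither $\delta$ (a parameter-carrying fixed point operator, closed up to the displayed parameter $E$) nor $E$ (a propositional variable) contains $x$ free, the side condition of qNec is met, so qNec applies to $\delta\r E$ and yields $(\exists x)x:(\delta\r E)$. Combining this with $\delta\r\neg(\exists x)x:(\delta\r E)$ by modus tollens then gives $\neg\delta$, which is exactly the desired conclusion; note that no appeal to AN is needed, so the derivation stays inside the $\emptyset$ fragment.

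The step I expect to carry all the weight is this application of qNec to the provable implication $\delta\r E$; everything else is routine propositional bookkeeping, and there is no genuine obstacle beyond verifying the qNec side condition. It is worth flagging that this is precisely the move that is unavailable in $\QLP^-$, which lacks qNec, so the same derivation should fail there --- in keeping with the pattern seen in Theorems~\ref{thm:The Examiner Paradox in QLP} and~\ref{thm:The Examiner Paradox in QLP^-}, where the $\QLP$ versions collapse while the corresponding $\QLP^-$ versions remain satisfiable.
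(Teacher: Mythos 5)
Your proposal is correct and follows essentially the same route as the paper's own proof: extract $\delta\r E$ and $\delta\r\neg(\exists x)x{:}(\delta\r E)$ from the fixed point axiom by propositional reasoning, apply qNec to the theorem $\delta\r E$ to obtain $(\exists x)x{:}(\delta\r E)$, and conclude $\neg\delta$ by modus tollens. Your explicit check of the qNec side condition and the observation that no use of AN is needed (so the derivation lives in the $\emptyset$ fragment) are both accurate and consistent with the paper's argument.
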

\begin{proof}
\begin{enumerate}
\item $\delta \leftrightarrow [  E\wedge \neg (\exists x)x:(\delta\r E)]$, fixed point axiom
\item $\delta\r E$, from 1 by propositional reasoning
\item $\delta\r\neg(\exists x)x:(\delta\r E)$, from 1 by propositional reasoning
\item $(\exists x)x:(\delta\r E)\r\neg\delta$, from 3 by propositional reasoning
\item $(\exists x) x:(\delta\r E)$, from 2 by qNec
\item $\neg \delta$, from 4, 5 by MP. \qed
\end{enumerate}
\end{proof}

\begin{theorem}\label{thm:self one-day surprise without unless QLP^-}
    \[\QLP^-(\delta \leftrightarrow [  E\wedge \neg (\exists x)x:(\delta\r E)])_\emptyset\not\vdash \neg\delta\]
\end{theorem}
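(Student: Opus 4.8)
The plan is to prove this underivability semantically, exactly as for Theorems \ref{thm:Knower Paradox in QLP^-} and \ref{thm:The Examiner Paradox in QLP^-}. Using the single-world Kripke model semantics for $\QLP^-$ developed in Appendix \ref{Appendix:Semantics of QLP}, together with its soundness theorem, I would exhibit one model of $\QLP^-(\delta \leftrightarrow [E\wedge \neg (\exists x)x:(\delta\r E)])_\emptyset$ in which $\delta$ is true, so that $\neg\delta$ fails at the world and hence cannot be a theorem.

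First I would recall the relevant structure: a single-world model consists of a domain over which the justification variables range and an evidence assignment specifying, for each term $t$, which formulas $t$ justifies, with the factivity axiom jT matched by the admissibility condition that a justified formula be true, and jK, j4, Sum matched by the usual closure conditions on the evidence assignment. Since we work in the fragment $(\cdot)_\emptyset$, the rule AN is absent, so the evidence assignment is under no obligation to justify any axiom instance.

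The decisive point is the contrast with the $\QLP$ proof of Theorem \ref{thm:self one-day surprise without unless QLP}, where step 5 derives $(\exists x)x:(\delta\r E)$ from $\delta\r E$ by qNec; in $\QLP^-$ the rule qNec is unavailable, so a true formula need carry no justification. I would therefore take the degenerate evidence assignment in which no term justifies anything, so that every assertion $t:A$, and hence every formula $(\exists x)x:A$, is false at the world. With this assignment jT, j4, jK and Sum hold vacuously (their justification antecedents are false), the quantifier axioms Q1--Q4 and the propositional tautologies hold by the first-order structure, and AN is vacuous; thus every axiom of $\QLP^-$ is validated and the admissibility conditions (in particular factivity) are trivially met by the empty assignment.

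Finally I would set $E$ true at the world. Then $(\exists x)x:(\delta\r E)$ is false regardless of the value assigned to $\delta$, so the map induced by $A(p,E)=E\wedge\neg(\exists x)x:(p\r E)$ is constant with value true; its unique fixed point is true, and assigning $\delta$ this value makes the fixed point biconditional hold. The model then validates $\QLP^-$ together with the fixed point axiom while making $\delta$ true and $\neg\delta$ false, and soundness yields $\not\vdash\neg\delta$. The only delicate step is the bookkeeping against the appendix's semantics, namely confirming that the empty evidence assignment really satisfies all admissibility conditions and that the $\exists$-justified fixed point formula $\delta$ may legitimately be interpreted by the fixed point of its induced (here constant) map; once the semantics and its soundness theorem are in place, this verification is routine.
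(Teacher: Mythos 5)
Your proposal is correct and follows essentially the same route as the paper: the paper's Appendix C constructs exactly this countermodel, namely an M-model (single-world Fitting model) with empty evidence function $\E(r,v)=\emptyset$ and valuation making $E$ and $\delta$ true (the fixed point operator $\delta$ being treated as a fresh propositional variable), so that every $t{:}A$ and hence $(\exists x)x{:}(\delta\r E)$ is false, the fixed point axiom is valid, $\delta$ holds, and soundness for $\QLP^-(F)_\emptyset$ gives $\not\vdash\neg\delta$. Your observation that the failure of qNec in $\QLP^-$ is what permits a true formula to carry no justification is precisely the point the paper is making with this theorem.
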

\begin{proof}
See Appendix \ref{Appendix:Semantics of QLP}.\qed
\end{proof}

Theorems \ref{thm:self one-day surprise without unless QLP}, \ref{thm:self one-day surprise without unless QLP^-} gives a solution to the self-reference one-day case of the Surprise Test Paradox. The self-reference $n$-day case is similar.

Now consider the self-reference two-day case of the paradox:

\begin{quote}
``A teacher announces that there will be exactly one surprise test on Wednesday or Friday next week, but you can't know from this statement the date of the test."
\end{quote}

This sentence can be expressed in ${\sf QLP(FP)}$ as follows:

\[\delta \leftrightarrow [  E_1\wedge \neg (\exists x)x:(\delta\r E_1)]\veebar[  E_2\wedge \neg (\exists x)x:(\delta\wedge\neg E_1\r E_2)]\]

where $E_1$ and $E_2$  denote the sentences ``you will have a test on Wednesday" and ``you will have a test on Friday" respectively, and $\delta=\delta_A(E_1,E_2)$ is the fixed point operator of the formula

$$A(p,E_1,E_2)= [  E_1\wedge \neg (\exists x)x:(p\r E_1)]\veebar[  E_2\wedge \neg (\exists x)x:(p\wedge\neg E_1\r E_2)],$$
and $\veebar$ denotes the exclusive disjunction.

\begin{theorem}\label{thm:self-reference tow-day Surprise Test Paradox in QLP}

  \[{\sf QLP}(\delta \leftrightarrow [  E_1\wedge \neg (\exists x)x:(\delta\r E_1)]\veebar[  E_2\wedge \neg (\exists x)x:(\delta\wedge\neg E_1\r E_2)])_\emptyset\vdash \neg\delta\]
\end{theorem}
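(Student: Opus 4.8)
The plan is to reproduce, at the level of the second (Friday) disjunct, the backward-induction reasoning of the informal paradox, and thereby reduce the two-day statement to the already-settled one-day case of Theorem~\ref{thm:self one-day surprise without unless QLP}. Write $P_1$ for $E_1\w\neg(\exists x)x:(\delta\r E_1)$ and $P_2$ for $E_2\w\neg(\exists x)x:(\delta\w\neg E_1\r E_2)$, so that the fixed point axiom reads $\delta\leftrightarrow(P_1\veebar P_2)$. First I would \emph{eliminate} the Friday disjunct $P_2$, then observe that the surviving equivalence $\delta\leftrightarrow P_1$ is exactly a one-day instance, and finish as before. As in the one-day proofs, the only justification principle used is qNec; axiom jT plays no role here.

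For the Friday-elimination step I would argue purely propositionally from the fixed point axiom that $\delta\w\neg E_1\r E_2$ holds: assuming $\delta$ and $\neg E_1$ gives $P_1\veebar P_2$, and since $\neg E_1$ forces $\neg P_1$, the exclusive disjunction yields $P_2$, hence $E_2$. Because this implication is a theorem and the quantified variable $x$ is not free in it, qNec produces $(\exists x)x:(\delta\w\neg E_1\r E_2)$. This directly contradicts the second conjunct of $P_2$, so $\neg P_2$ is provable. Combining $\neg P_2$ with the fixed point axiom and the defining behaviour of $\veebar$ (an exclusive disjunction with a false disjunct collapses to the other disjunct) yields $\delta\leftrightarrow P_1$.

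The reduced equivalence $\delta\leftrightarrow P_1$ is literally the one-day fixed point equation $\delta\leftrightarrow(E_1\w\neg(\exists x)x:(\delta\r E_1))$, so I would finish exactly as in Theorem~\ref{thm:self one-day surprise without unless QLP}. From $\delta\r P_1$ we obtain the theorem $\delta\r E_1$; since $x$ is not free in it, qNec gives $(\exists x)x:(\delta\r E_1)$. On the other hand $\delta\r P_1$ also gives $\delta\r\neg(\exists x)x:(\delta\r E_1)$, i.e. $(\exists x)x:(\delta\r E_1)\r\neg\delta$. Modus Ponens then delivers $\neg\delta$.

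The one genuinely delicate point is the bookkeeping around the exclusive disjunction together with the side-condition on qNec. One must verify the truth-functional fact that $\veebar$ collapses as claimed under $\neg P_2$, and, more importantly, that the bound variable $x$ occurring inside $\delta$ and inside the two subformulas $(\exists x)x:\ldots$ is genuinely not free in the formulas $\delta\w\neg E_1\r E_2$ and $\delta\r E_1$ to which qNec is applied. If the matching failed, the necessitated formulas would not coincide syntactically with the subformulas appearing in $P_1$ and $P_2$, and neither contradiction would close. Since $x$ occurs only bound in these formulas, both qNec applications are licit and the matching is exact, so the argument goes through. Note that, exactly as in the one-day case, this derivation uses qNec essentially (twice) and invokes no internalization lemma, which is what distinguishes it from the corresponding $\QLP^-$ situation.
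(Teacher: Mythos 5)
Your proposal is correct and follows essentially the same route as the paper's proof: both derive $\delta\w\neg E_1\r E_2$ propositionally from the fixed point axiom, apply qNec to it, use the result to kill the Friday disjunct and obtain $\delta\r E_1$, apply qNec a second time, and close propositionally — with jT and internalization playing no role. Your only departure is making the intermediate reduction $\delta\leftrightarrow P_1$ explicit before finishing via the one-day argument, which the paper leaves implicit by passing directly from the fixed point axiom and the first qNec conclusion to $\delta\r E_1$.
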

\begin{proof}
\begin{enumerate}
\item $\delta \leftrightarrow [  E_1\wedge \neg (\exists x)x:(\delta\r E_1)]\veebar[  E_2\wedge \neg (\exists x)x:(\delta\wedge\neg E_1\r E_2)]$, fixed point axiom
\item $\delta\wedge\neg E_1\r E_2$, from 1 by propositional reasoning
\item $(\exists x)x:(\delta\wedge\neg E_1\r E_2)$, from 2 by qNec
\item $\delta\r E_1$, from 1, 3 by propositional reasoning
\item $(\exists x)x:(\delta\r E_1)$, from 4 by qNec
\item $\neg \delta$, from 1, 5 by propositional reasoning.\qed
\end{enumerate}
\end{proof}

\begin{theorem}\label{thm:self-reference tow-day Surprise Test Paradox in QLP^-}

  \[\QLP^-(\delta \leftrightarrow [  E_1\wedge \neg (\exists x)x:(\delta\r E_1)]\veebar[  E_2\wedge \neg (\exists x)x:(\delta\wedge\neg E_1\r E_2)])_\emptyset \not\vdash \neg\delta\]
\end{theorem}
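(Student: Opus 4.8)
The plan is to establish this non-derivability semantically, exactly as in the companion $\QLP^-$ results (Theorems \ref{thm:Knower Paradox in QLP^-}, \ref{thm:The Examiner Paradox in QLP^-}, \ref{thm:self one-day surprise without unless QLP^-}): I would invoke the single-world Kripke models for $\QLP^-$ and their soundness from Appendix \ref{Appendix:Semantics of QLP}, and exhibit one model of the logic $\QLP^-(\delta \leftrightarrow [E_1\w \neg (\exists x)x:(\delta\r E_1)]\veebar[E_2\w \neg (\exists x)x:(\delta\w\neg E_1\r E_2)])_\emptyset$ in which $\neg\delta$ fails, i.e. in which $\delta$ is true. Since the models of this logic are precisely the single-world $\QLP^-$ models that in addition validate the displayed fixed point axiom, producing one such model in which $\delta$ holds suffices: by soundness every theorem holds in all these models, so $\neg\delta$ cannot be a theorem.

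The decisive point is exactly where the $\QLP$ argument (Theorem \ref{thm:self-reference tow-day Surprise Test Paradox in QLP}) breaks down over $\QLP^-$. There the justification assertions $(\exists x)x:(\delta\w\neg E_1\r E_2)$ and $(\exists x)x:(\delta\r E_1)$ were produced by the rule qNec (steps 3 and 5), which is absent from $\QLP^-$. I therefore plan to use the minimal evidence assignment, in which no term justifies any formula: because the specification is $\emptyset$ there is no rule AN to populate it, and the closure conditions imposed by jK, Sum and j4 only propagate already-existing justifications, so they are satisfied vacuously by the empty assignment. Under this assignment every formula of the form $(\exists x)x:B$ is false in the model.

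Concretely, I would take the single world so that $E_1$ is true and $E_2$ is false. Then both justification assertions in the fixed point equation are false, their negations are true, and the right-hand side collapses to $E_1\veebar E_2$, which is true. Hence declaring $\delta$ true is consistent with the fixed point axiom, since both sides of $\delta \leftrightarrow ([E_1\w\ldots]\veebar[E_2\w\ldots])$ then evaluate to true, with exactly the first exclusive-disjunct holding. All remaining axioms hold in this model: jT, jK, j4 and Sum hold vacuously because their justification antecedents are false, while Taut and the quantifier axioms Q1--Q4 hold under the ordinary first-order reading of the single world, and MP and Gen preserve validity. Thus the model belongs to the relevant class yet falsifies $\neg\delta$, giving $\QLP^-(\ldots)_\emptyset\not\vdash\neg\delta$.

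The main obstacle I anticipate is purely one of matching this construction to the precise definition of the single-world models in Appendix \ref{Appendix:Semantics of QLP} — in particular confirming that the empty evidence assignment is admissible there, and that the semantics of $(\exists x)x:(\cdot)$ genuinely ranges over justifications governed by the evidence function, so that it can be made uniformly false. Once that is checked, the truth-value computation is routine, and the contrast with the $\QLP$ case is transparent: there qNec forces those two existential justification statements to be true, which collapses the right-hand side to $\neg E_1\wedge\neg E_2$-type conditions and drives $\delta$ to be false, whereas in $\QLP^-$ no such forcing occurs and the model above survives.
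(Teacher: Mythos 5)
Your proposal is correct and takes essentially the same route as the paper's own proof: the appendix exhibits precisely this countermodel, namely an M-model with empty evidence function ($\E(r,v)=\emptyset$ for all $r,v$, admissible since with primitive term specification $\emptyset$ all closure conditions hold vacuously), with $\V(\delta)=\V(E_1)=1$ and $\V(E_2)=0$, so that every formula $(\exists x)x{:}B$ is false, both the fixed point axiom and $\delta$ are valid, and non-derivability of $\neg\delta$ then follows from the soundness theorem for $\QLP^-$ extended by a single fixed point axiom (treating $\delta$ as a fresh propositional variable, just as you do). Your truth-value computation, including the essential choice $E_2$ false so that the exclusive disjunction comes out true, matches the paper exactly.
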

\begin{proof}
See Appendix \ref{Appendix:Semantics of QLP}.\qed
\end{proof}

Finally consider the non-self-reference one-day case of the paradox as follows:
\begin{quote}
``You will have a test tomorrow that will take you by surprise, i.e. you can't know it beforehand."
\end{quote}

As Sorensen proposed in \cite{Sorensen1984,Sorensen2014} the above statement is an epistemic blindspot for the students.\footnote{Binkley \cite{Binkley1968} presented the same analysis but in the doxastic modal logic {\sf KD4}, and conclude that the Surprise Test Paradox belongs to the same family as Moor's paradox. For a related discussion see also Quine's argument in \cite{Quine1953}.} An statement $A$ is an \textit{epistemic blindspot} for person $s$ if and only if $A$ is true but not known by $s$, i.e. $A\wedge \neg K_s A$. (Such sentences are also called \textit{pragmatically paradoxical}.) This statement can be expressed in $\QLP$ by

\begin{equation}\label{eq:blindspot in QLP}
E\wedge \neg (\exists x)x:_s E.
\end{equation}

We show that it is provable in $\QLP^-$ that the teacher's announcement (\ref{eq:blindspot in QLP}) is a blindspot for the student $s$.

\begin{theorem}
\[\QLP^-_\mathcal{F} \vdash\neg (\exists y) y:_s[  E\wedge \neg (\exists x)x:_s E]\]
where $\mathcal{F}=\{c:_s (E\wedge \neg (\exists x)x:_s E \r E)\}$.
\end{theorem}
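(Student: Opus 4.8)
The statement asserts that, using only the single justified tautology $c:_s(E\w\neg(\exists x)x:_s E\r E)$ available in $\mathcal{F}$, the blindspot announcement has no justification. The plan is to abbreviate the blindspot by $B:=E\w\neg(\exists x)x:_s E$ and first to show, for a justification variable $y$, that $\QLP^-_\mathcal{F}\vd\neg\,y:_s B$; the quantified conclusion then follows by the Generalization rule together with the $\forall$/$\exists$ duality built into axiom Q4.

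For the open formula I would reason as follows. By the rule AN applied to the unique member of $\mathcal{F}$ we have $c:_s(B\r E)$. Feeding this into the application axiom jK gives $c:_s(B\r E)\r(y:_s B\r(c\cdot y):_s E)$, whence $y:_s B\r(c\cdot y):_s E$ by MP; composing with the instance $(c\cdot y):_s E\r(\exists x)x:_s E$ of axiom Q3 yields $y:_s B\r(\exists x)x:_s E$. On the other hand, the factivity axiom jT gives $y:_s B\r B$, and since $B\r\neg(\exists x)x:_s E$ is a propositional tautology we obtain $y:_s B\r\neg(\exists x)x:_s E$. The two implications have the very same formula $(\exists x)x:_s E$ as their (negated and unnegated) consequents, so propositional reasoning delivers $\neg\,y:_s B$.

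To lift this to the existential statement I would apply Gen to get $(\forall y)\neg\,y:_s B$, that is $(\forall y)(y:_s B\r\bot)$, and then use the instance $(\forall y)(y:_s B\r\bot)\r((\exists y)y:_s B\r\bot)$ of axiom Q4 (legitimate since $y$ does not occur free in $\bot$); MP then yields $\neg(\exists y)y:_s B$, which is exactly the claim.

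The main point to watch is the legitimacy of the generalization step: since the entire derivation of $\neg\,y:_s B$ proceeds from axioms alone, with no undischarged hypothesis in which $y$ occurs free, Gen on $y$ is sound. This is precisely where the argument stays inside $\QLP^-$ without invoking qNec or the internalization lemma, the very ingredients that turned the earlier self-reference announcements into outright inconsistencies. It also remains to verify the routine side conditions, namely that $B\r E$ is a genuine tautology instance so that AN applies and that $c\cdot y$ is free for $x$ in $x:_s E$ so that Q3 applies, but these present no difficulty.
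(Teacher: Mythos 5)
Your proof is correct and is essentially the paper's own argument: the same use of AN on the tautology $E\wedge\neg(\exists x)x:_s E\r E$, the same jK/Q3 composition giving $y:_s B\r(\exists x)x:_s E$, the same jT step giving $y:_s B\r\neg(\exists x)x:_s E$, hence $\neg\, y:_s B$, followed by Gen and quantifier reasoning (the paper compresses your explicit Q4 step into ``reasoning in first order logic''). Your added remarks on the side conditions and on why Gen is legitimate are sound and only make explicit what the paper leaves implicit.
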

\begin{proof}
Let $F=E\wedge \neg (\exists x)x:_s E$. We show that $\neg (\exists y) y:_s F$ is provable in $\QLP^-_\mathcal{F}$.
\begin{enumerate}
\item $F \r E$, a propositional tautology
\item $c:_s (F \r E)$, from 1 by AN
\item $c:_s (F \r E) \r (y:_s F \r c \cdot y :_s E)$, an instance of axiom jK
\item $y:_s F \r c\cdot y:_s E$, from 2, 3 by MP
\item $c\cdot y:_s E \r (\exists x)x:_s E$, an instance of axiom Q3
\item $y:_s F \r (\exists x)x:_s E$, from 4, 5 by propositional reasoning
\item $F\r \neg (\exists x)x:_s E$, a propositional tautology
\item $y:_s F \r F$, an instance of axiom jT
\item $y:_s F \r \neg (\exists x)x:_s E$, from 7, 8 by propositional reasoning
\item $\neg y:_s F$, from 6, 9 by propositional reasoning
\item $(\forall y) \neg y:_s F$, from 10 by Gen
\item $\neg (\exists y) y:_s F$, from 11 by reasoning in first order logic. \qed
\end{enumerate}
\end{proof}

But note that (\ref{eq:blindspot in QLP}) is not necessarily a blindspot for others, i.e. we could consistently have $(\exists y) y:_{s'}[  E\wedge \neg (\exists x)x:_s E]$ for a non-student person $s'$.

\section{Conclusion}
We have presented several fixed point extensions of justification logics: extensions by fixed point operators, and extensions by least fixed points. There remained one  problem here. Is there a justification logic with the fixed point property, namely a justification logic for which a fixed point theorem can be proved in its original language? A complete affirmative answer to this question is not expected, since the rule substitution of equivalents SE does not hold in justification logics (as noted in the Introduction).

We have also presented fixed point extensions of Fitting's quantified logic of proofs, and formalize the Knower Paradox and various versions of the Surprise Test Paradox in these extensions. By interpreting a surprise statement as a statement for which there is no justification, we give a solution to the self-reference version of the  Surprise Test Paradox. Our analysis of theses paradoxes presumes evidence-based interpretation of knowledge, that is to say knowledge of a fact means there is an evidence for that fact. With regard to this fact, the paradoxes could be resolved in $\QLP^-$, i.e by restricting the axioms and rules of quantified logic of proofs to those of first order logic and $\LP$.

In this respect, generally, we find both the rule qNec and the uniformity formula UF problematic. The rule qNec when formulated for an agent $s$ says: if $A$ is a theorem (of \QLP), then $s$ has an evidence for $A$. This implies that the agent knows every theorem, which means that the agent is logically omniscient. Hence, qNec is an acceptable rule only for idealized agents. However, we could suppose that students in the Surprise Test Paradox are idealized agents, and try to solve the puzzle in this setting. Thus, the only possible way to avoid the paradox is to reject the uniformity formula UF. This observation agrees with Dean-Kurokawa's analysis \cite{DeanKurokawa2014} which found the rule JUG suspicious.
\\

\noindent
{\bf Acknowledgments}\\

I would like to thank Ali Valizadeh and Mojtaba Mojtahedi  for their useful comments. This research was in part supported by a grant from IPM. (No. 93030416)
\appendix

\section{The Surprise Test Paradox in timed modal epistemic logics}\label{Appendix:Surprise test paradox in tMEL}
In order to formalize the $n$-day case of the surprise test paradox, most authors (see e.g. \cite{Binkley1968,Chow,KaplanMontague1960,Kripke2011}) uses temporalized knowledge operators such as $K_i (p)$, intended to mean that ``the student knows on day $i$ that $p$ is true."

Wang in \cite{Wang2011} presented a family of timed modal epistemic logics, ${\sf tMEL}$, with timed knowledge operators $K^i F$, for natural number $i$,  meaning that ``$F$ is known at time $i$." The interpretation of timed knowledge operators in {\sf tMEL} in  possible world semantics can be stated informally as follows: a proposition is known at time $i$ in a world $w$ if and only if it is true at all worlds accessible from $w$ and the first time that the truth of the proposition is accepted is before or at $i$ (for more details cf. \cite{Wang2011}).

Among various timed modal epistemic logics, we formalize the surprise test paradox within timed modal epistemic logic {\sf tK}.\footnote{In the notation of Wang \cite{Wang2011}, {\sf tK} is indeed a timed modal epistemic logic with \textit{comprehensive principal logical base}. We do not state the details of the definition of base and comprehensive principal logical base here, but let me only quote him: ``Agents with comprehensive logical bases are unrealistic. They know too much from the beginning." Thus, in this section we again suppose that students in the Surprise Test Paradox are idealized agents, and try to solve the puzzle in this setting.} Formulas of {\sf tK} are constructed by the following grammar:
\[ A::= p~|~\bot~|~\neg A~|~A\wedge A~|~A\vee A~|~A\rightarrow A~|~K^i A,\]
where $p$ is a propositional variable, and $i$ is a non-negative integer. Axioms of {\sf tK} are as follows:

\begin{description}
\item[Taut.] All tautologies of propositional logic,
\item[tK.] $K^i(A \r B)\r(K^j A \r K^k B)$, where $i,j<k$,
\item[Mon.] $K^i A \r K^j A$, where $i<j$.
\end{description}

Rules of inference are modus ponens and the followings:
\[\di{\frac{\vdash A}{\vdash K^i A \r K^j K^i A}\ (DE) \qquad \di{\frac{\vdash A}{\vdash K^i A}}}\ (E)\]
where in (DE) $i<j$.
Axiom tK is a temporal counterpart of modal axiom K. The monotonicity axiom Mon states that if one knows a statement at a time, then she knows it at any later time. This principle does not hold in everyday life of course.  In \cite{Wang2011} the rules (DE) and (E) are called respectively \textit{Deduction by Epistemization} and \textit{Epistemization}, and moreover (DE) is formulated as an axiom.

It is easy to verify that the deduction theorem holds in {\sf tK}. In addition, the following rule is admissible in {\sf tK}:
\[\di{\frac{\vdash A \r B}{\vdash K^i A \r K^j A}}\ (TKC)\]
for $i<j$. This rule is called \textit{Timed Knowledge Closure} in \cite{Wang2011}, and is used to claim that {\sf tK}-agents (where {\sf tK} are formulated by restricted forms of the rules (DE) and (E)) are not logical omniscience. The following theorem of {\sf tK} is also useful:

\begin{equation}\label{eq:K(A&B) in tK}
K^i(A \wedge B)\r K^j A \wedge K^j B, \quad\mbox{for}\ i<j
\end{equation}

Now let us consider the non-self-reference two-day case of the Surprise Test Paradox (the argument for $n$-day case is similar):
\begin{quote}
``A teacher announces that there will be exactly one surprise test on Wednesday or Friday next week."
\end{quote}
 Assume that teacher's announcement is taken place at time 0, Friday class meeting is at time 10, and Wednesday class meeting is at time 20. Teacher's announcement can be formalized as follows:
\[\varphi = (E_1 \veebar E_2) \wedge (E_1 \r \bigwedge_{0\leq i< 10} \neg K^i E_1) \wedge (E_2 \r (\bigwedge_{0\leq i<20} \neg K^i E_2 \wedge \bigwedge_{10<i\leq 20} K^i \neg E_1)),\]
where $E_1$ and $E_2$  denote the sentences ``you will have a test on Wednesday" and ``you will have a test on Friday" respectively, and $\veebar$ denotes the exclusive or. Let us formalize the first stage of the student's argument  in {\sf tK}, that is the test cannot be held on Friday. (Note that {\sf tK} is indeed a logic of belief, and hence in the following theorem $K^i A$ can be read as ``$A$ is believed at time $i$.")

\begin{theorem}\label{thm:surprise in tK}
$K^1(E_1 \vee E_2) , \varphi \vdash_{\sf tK} \neg E_2$.
\end{theorem}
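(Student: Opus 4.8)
The plan is to derive $\neg E_2$ by contradiction: I would work under the extra hypothesis $E_2$, reach $\bot$, and then appeal to the deduction theorem for {\sf tK} (together with the propositional equivalence of $E_2 \r \bot$ and $\neg E_2$) to discharge it. The first move is purely propositional: from $\varphi$ I isolate its third conjunct $E_2 \r (\bigwedge_{0 \le i < 20} \neg K^i E_2 \wedge \bigwedge_{10 < i \le 20} K^i \neg E_1)$ and detach its consequent using $E_2$. This supplies the two facts that will eventually clash, namely $\neg K^{13} E_2$ (an instance of the surprise clause, since $13 < 20$) and $K^{11} \neg E_1$ (an instance of the second clause, since $10 < 11 \le 20$).

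The core of the argument is to manufacture $K^{13} E_2$ out of the hypothesis $K^1(E_1 \vee E_2)$ and the fact $K^{11} \neg E_1$, in direct contradiction with $\neg K^{13} E_2$. I would begin from the propositional tautology $\neg E_1 \r ((E_1 \vee E_2) \r E_2)$ and internalize it with rule (E) to get $K^{11}(\neg E_1 \r ((E_1 \vee E_2) \r E_2))$. One application of axiom tK (with $i = j = 11$ and $k = 12$), fed with $K^{11} \neg E_1$, yields $K^{12}((E_1 \vee E_2) \r E_2)$; a second application of tK (with $i = 12$, $j = 1$ and $k = 13$), fed with the hypothesis $K^1(E_1 \vee E_2)$, then delivers $K^{13} E_2$. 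Putting $K^{13} E_2$ beside $\neg K^{13} E_2$ closes the contradiction.

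The only delicate point is the arithmetic of the time superscripts. Each use of tK strictly increases the outer index through its side condition $i, j < k$, so I must check that the two chained applications still terminate strictly before the surprise deadline $20$ carried by $E_2$ in $\varphi$. The knowledge lands at time $13$, and since $13 < 20$ the derived $K^{13} E_2$ really does contradict the clause $\neg K^{13} E_2$; with a tighter deadline the same schematic derivation would collapse. I note in passing that axiom Mon is not needed here, since $K^1(E_1 \vee E_2)$ can be supplied directly to the second tK application with $j = 1 < k = 13$.
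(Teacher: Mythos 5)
Your proposal is correct and follows essentially the same route as the paper's proof: assume $E_2$, extract $K^{11}\neg E_1$ and the relevant surprise clause $\neg K^i E_2$ from $\varphi$, chain the premise $K^1(E_1\vee E_2)$ with $K^{11}\neg E_1$ through axiom tK to produce knowledge of $E_2$ before the deadline, and discharge the assumption by the deduction theorem. The only (immaterial) difference is bookkeeping: you internalize the tautology $\neg E_1 \r ((E_1\vee E_2)\r E_2)$ with rule (E) and apply axiom tK twice, landing at $K^{13}E_2$, whereas the paper applies the admissible rule (TKC) to $(E_1\vee E_2)\r(\neg E_1\r E_2)$ and then uses tK once, landing at $K^{12}E_2$.
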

\begin{proof}
\begin{enumerate}
\item $K^1(E_1 \vee E_2)$, premise
\item $\varphi$, premise
\item $E_2$, assumption
\item $\neg K^{12} E_2$, from 2, 3, by propositional reasoning
\item $K^{11} \neg E_1$, from 2, 3, by propositional reasoning
\item $(E_1 \vee E_2)\r (\neg E_1 \r E_2)$, a propositional tautology
\item $K^1(E_1 \vee E_2)\r K^2(\neg E_1 \r E_2)$, from 6 by rule (TKC)
\item $K^2(\neg E_1 \r E_2)$, from 1, 7 by MP
\item $K^{12} E_2$, from 5, 8 and axiom tK
\item $\bot$, from 4 and 9.
\end{enumerate}
Thus $K^1(E_1 \vee E_2) , \varphi,E_2 \vdash_{\sf tK} \bot$. By the deduction theorem, $K^1(E_1 \vee E_2) , \varphi \vdash_{\sf tK} \neg E_2$.\qed
\end{proof}

Now consider the timed logic of knowledge {\sf tT}, which is obtained from {\sf tK} by adding the following axiom:
\begin{description}
\item[tT.] $K^i A \r A$.
\end{description}
The first stage of the student's argument can be also formalized in {\sf tT} as follows.

\begin{theorem}\label{thm:surprise in tT}
$K^1 \varphi \vdash_{\sf tT} \neg E_2$.
\end{theorem}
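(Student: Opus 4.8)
The plan is to imitate the proof of Theorem~\ref{thm:surprise in tK}, replacing the two separate hypotheses $K^1(E_1\vee E_2)$ and $\varphi$ by the single hypothesis $K^1\varphi$, and using the factivity axiom tT to recover what is needed. The key observation is that tT lets us strip off the outer $K^1$ to obtain the bare announcement $\varphi$ (which is what drives the purely propositional extraction steps), while the admissible rule (TKC) lets us transport knowledge of $\varphi$ forward in time into knowledge of any propositional consequence of $\varphi$, thereby supplying the ingredient that was an extra premise in the {\sf tK} version.

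First I would derive $\varphi$ itself: from the premise $K^1\varphi$ and the instance $K^1\varphi\r\varphi$ of axiom tT, modus ponens gives $\varphi$. Then, arguing for a contradiction with a view to the deduction theorem (which continues to hold in {\sf tT}, since the rules (DE) and (E) apply only to theorems), I would assume $E_2$. Since the third conjunct of $\varphi$ is $E_2\r(\bigwedge_{0\leq i<20}\neg K^i E_2\w\bigwedge_{10<i\leq 20}K^i\neg E_1)$, propositional reasoning from $\varphi$ and $E_2$ yields both $\neg K^{12}E_2$ (as $12<20$) and $K^{11}\neg E_1$ (as $10<11\leq 20$).

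Next I would manufacture the knowledge $K^2(\neg E_1\r E_2)$ directly from the premise, bypassing the auxiliary hypothesis used in Theorem~\ref{thm:surprise in tK}. Because $\varphi\r(E_1\veebar E_2)$, the implication $\varphi\r(\neg E_1\r E_2)$ is a propositional tautology; applying rule (TKC) to it gives $K^1\varphi\r K^2(\neg E_1\r E_2)$, and modus ponens with $K^1\varphi$ gives $K^2(\neg E_1\r E_2)$. The instance $K^2(\neg E_1\r E_2)\r(K^{11}\neg E_1\r K^{12}E_2)$ of axiom tK is legitimate since $2,11<12$, so two further applications of modus ponens (using the $K^{11}\neg E_1$ obtained above) produce $K^{12}E_2$, contradicting $\neg K^{12}E_2$. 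Hence $K^1\varphi,E_2\vdash_{\sf tT}\bot$, and the deduction theorem yields $K^1\varphi\vdash_{\sf tT}\neg E_2$.

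I expect the only delicate point to be the index bookkeeping in the final steps: one must verify that the time stamps produced by (TKC) and read off from $\varphi$ are early enough to feed axiom tK and to land on \emph{exactly} $K^{12}E_2$ — that is, $2<12$ and $11<12$ while $12<20$ — so that the derived $K^{12}E_2$ genuinely clashes with the surprise clause $\neg K^{12}E_2$. Everything else is routine propositional manipulation together with the factivity axiom tT, which is precisely the ingredient absent from {\sf tK} that allows the single premise $K^1\varphi$ to suffice here.
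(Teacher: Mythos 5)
Your proof is correct and follows the same overall strategy as the paper's: use tT to strip the premise down to $\varphi$, assume $E_2$, extract $\neg K^{12}E_2$ and $K^{11}\neg E_1$ propositionally from $\varphi$ and $E_2$, manufacture knowledge of $\neg E_1\r E_2$ at an early time stamp, combine it with $K^{11}\neg E_1$ via axiom tK to obtain $K^{12}E_2$, and close with the deduction theorem. The one place you diverge is worth recording: you apply (TKC) once, to the composite tautology $\varphi\r(\neg E_1\r E_2)$, obtaining $K^1\varphi\r K^2(\neg E_1\r E_2)$ directly from the premise, whereas the paper proceeds in two stages --- first (TKC) on $\varphi\r(E_1\vee E_2)$ to get $K^2(E_1\vee E_2)$, then (TKC) on $(E_1\vee E_2)\r(\neg E_1\r E_2)$. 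As printed, the paper's second stage produces $K^1(E_1\vee E_2)\r K^2(\neg E_1\r E_2)$, whose antecedent does not match the previously derived $K^2(E_1\vee E_2)$, so its modus ponens step needs a small repair (instantiate (TKC) with $2<3$ to get $K^2(E_1\vee E_2)\r K^3(\neg E_1\r E_2)$, which still feeds tK since $3,11<12$). Your single-application variant sidesteps this bookkeeping entirely, and your index checks ($1<2$ for (TKC); $2,11<12$ for tK; $12<20$ and $10<11\leq 20$ for reading off the clauses of $\varphi$) are exactly right, so your derivation is, if anything, tighter than the one in the paper.
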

\begin{proof}
\begin{enumerate}
\item $K^1 \varphi$, premise
\item $E_2$, assumption
\item $\varphi$, from 1 by axiom tT
\item $\varphi \r (E_1 \vee E_2)$, a propositional tautology
\item $K^1 \varphi \r K^2(E_1 \vee E_2)$, from 4 by rule (TKC)
\item $K^2(E_1 \vee E_2)$, from 1, 5  by MP
\item $\neg K^{12} E_2$, from 2, 3, by propositional reasoning
\item $K^{11} \neg E_1$, from 2, 3, by propositional reasoning
\item $(E_1 \vee E_2)\r (\neg E_1 \r E_2)$, a propositional tautology
\item $K^1(E_1 \vee E_2)\r K^2(\neg E_1 \r E_2)$, from 9 by rule (TKC)
\item $K^2(\neg E_1 \r E_2)$, from 6, 10 by MP
\item $K^{12} E_2$, from 8, 11 and axiom tK
\item $\bot$, from 7 and 12.
\end{enumerate}
Thus $K^1 \varphi,E_2 \vdash_{\sf tT} \bot$. By the deduction theorem, $K^1 \varphi \vdash_{\sf tT} \neg E_2$.\qed
\end{proof}

In order to formalize the student's argument completely and eliminate the remaining day (here Wednesday), we employ an extension of {\sf tT}. The timed modal epistemic logic {\sf tS4} is obtained by adding the positive introspection axiom to {\sf tT}:
\begin{description}
\item[t4.] $K^i A \r K^j K^i A$, where $i<j$.
\end{description}
It is not difficult to show that the following rule is admissible in {\sf tS4}:
\begin{equation}\label{eq:admissible rule in tS4}
\di{\frac{K^{i_1} A_1,\ldots,K^{i_n} A_n \vdash B}{K^{i_1} A_1,\ldots,K^{i_n} A_n \vdash K^j B}}
\end{equation}
where $i_1,\ldots,i_n < j$.
Now let us continue the student's argument and rule out the remaining day Wednesday.

\begin{theorem}\label{thm:surprise in tS4}
$K^1 \varphi \vdash_{\sf tS4} \neg E_1$.
\end{theorem}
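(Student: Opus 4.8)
The plan is to carry out the second stage of the student's argument---ruling out Wednesday---by \emph{internalizing} the first stage. Theorem~\ref{thm:surprise in tT} already gives $K^1 \varphi \vdash_{\sf tT} \neg E_2$, and since {\sf tS4} extends {\sf tT} the same derivation yields $K^1 \varphi \vdash_{\sf tS4} \neg E_2$. First I would promote this to a piece of \emph{knowledge}: applying the admissible rule~(\ref{eq:admissible rule in tS4}) to the single premise $K^1 \varphi$ (so $i_1 = 1$) with $j = 2 > 1$ gives
\[ K^1 \varphi \vdash_{\sf tS4} K^2 \neg E_2. \]
This is the heart of the proof and the step that genuinely needs {\sf tS4}: the positive introspection axiom t4 (packaged in rule~(\ref{eq:admissible rule in tS4})) is exactly what lets the student \emph{know} the conclusion $\neg E_2$ of their own first-stage reasoning, which neither {\sf tK} nor {\sf tT} supplies.

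Next I would turn ``knowing not-Friday'' into ``knowing Wednesday.'' Since $\varphi$ contains the exclusive disjunction $E_1 \veebar E_2$, the implication $\varphi \r (\neg E_2 \r E_1)$ is a propositional tautology; applying rule (TKC) to it gives $K^1 \varphi \r K^2(\neg E_2 \r E_1)$, hence $K^2(\neg E_2 \r E_1)$ by MP. Combining this with $K^2 \neg E_2$ through axiom tK (the instance $K^2(\neg E_2 \r E_1) \r (K^2 \neg E_2 \r K^3 E_1)$, legitimate since both relevant indices $2$ are strictly below $3$) yields
\[ K^1 \varphi \vdash_{\sf tS4} K^3 E_1. \]
Thus the student can predict the Wednesday test already at time $3$.

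Finally I would collide this with the surprise clause for Wednesday. Extracting $\varphi$ from $K^1 \varphi$ by axiom tT and reading off the conjunct $E_1 \r \bigwedge_{0 \le i < 10} \neg K^i E_1$ at $i = 3$ gives $E_1 \r \neg K^3 E_1$, i.e.\ the contrapositive $K^3 E_1 \r \neg E_1$; together with $K^3 E_1$ this delivers $\neg E_1$ by propositional reasoning. Note that the index $3$ must satisfy $3 < 10$ so that $K^3 E_1$ really violates the surprise condition; any index strictly between $1$ and $10$ would do, and the tK step above produces one. The main obstacle is therefore concentrated entirely in the internalization step producing $K^2 \neg E_2$: once the student is allowed to know the output of the Friday argument, the remainder is the same tK/(TKC) bookkeeping already used in Theorems~\ref{thm:surprise in tK} and~\ref{thm:surprise in tT}.
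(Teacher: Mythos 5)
Your proof is correct, and it takes a mildly but genuinely different route from the paper's. The paper finishes the student's reasoning entirely in the object language first: from $K^1\varphi$ it derives $\neg E_2$ (Theorem \ref{thm:surprise in tT}), extracts $\varphi$ by axiom tT, gets $E_1\vee E_2$, concludes $E_1$ propositionally, and only then applies rule (\ref{eq:admissible rule in tS4}) once, to the whole derivation $K^1\varphi\vdash E_1$, obtaining $K^2E_1$; the collision with the surprise conjunct happens at index $2$. You internalize earlier instead: you apply rule (\ref{eq:admissible rule in tS4}) to $K^1\varphi\vdash\neg E_2$ to get $K^2\neg E_2$, and then redo the disjunctive syllogism \emph{inside} the knowledge operators, using (TKC) to get $K^2(\neg E_2\r E_1)$ and axiom tK to land at $K^3E_1$, colliding with the surprise conjunct at index $3$ (which is fine, since $3<10$). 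Both proofs hinge on the same essential {\sf tS4} ingredient, the admissible rule (\ref{eq:admissible rule in tS4}) whose admissibility rests on t4, so neither is more general; the paper's version is shorter because all Boolean reasoning stays outside the scope of $K$, while yours tracks the informal narrative more faithfully (the student \emph{knows} the test cannot be on Friday, and from that knowledge \emph{foresees} the Wednesday test), making explicit which epistemic state the agent occupies at each time. One cosmetic caveat: you use (TKC) in the form ``from $\vdash A\r B$ infer $\vdash K^iA\r K^jB$'' for $i<j$; the paper's displayed statement of (TKC) literally has conclusion $K^iA\r K^jA$, but that is evidently a typo, since the paper itself applies (TKC) in your form in the proofs of Theorems \ref{thm:surprise in tK} and \ref{thm:surprise in tT}, so your usage matches the intended rule.
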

\begin{proof}
\begin{enumerate}
\item $K^1 \varphi$, premise
\item $\neg E_2$, by Theorem \ref{thm:surprise in tT}
\item $\varphi$, from 1 by axiom tT
\item $E_1 \vee E_2$, from 3 by propositional reasoning
\item $E_1$, from 2, 4, by propositional reasoning
\item $K^2 E_1$, from 1-5 by rule (\ref{eq:admissible rule in tS4})
\item $E_1 \r \bigwedge_{0\leq i< 10} \neg K^i E_1$, from 3 by propositional reasoning
\item $E_1 \r \neg K^2 E_1$, from 7 by propositional reasoning
\item $K^2 E_1 \r \neg E_1$, from 8 by propositional reasoning
\item $\neg E_1$, from 6, 9, by MP\qed
\end{enumerate}
\end{proof}

In fact, lines 5 and 10 of the above proof get a contradiction from the assumption $K^1 \varphi$.
\begin{theorem}
$K^1 \varphi \vdash_{\sf tS4} \bot $.
\end{theorem}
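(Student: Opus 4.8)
The plan is to assemble a contradiction directly from the two preceding theorems together with the disjunctive content of the teacher's announcement $\varphi$. First I would note that Theorem \ref{thm:surprise in tS4} already gives $K^1\varphi\vd_{\sf tS4}\neg E_1$, and that Theorem \ref{thm:surprise in tT} gives $K^1\varphi\vd_{\sf tT}\neg E_2$; since {\sf tS4} is obtained from {\sf tT} merely by adjoining the axiom t4, every {\sf tT}-derivation is also a {\sf tS4}-derivation, so $K^1\varphi\vd_{\sf tS4}\neg E_2$ as well. Thus both testing days have been ruled out under the hypothesis $K^1\varphi$.

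Next I would recover the positive disjunction from the announcement. Applying axiom tT to the premise $K^1\varphi$ yields $\varphi$, and since $\varphi$ contains the conjunct $E_1\veebar E_2$, propositional reasoning extracts $E_1\vee E_2$ (the exclusive disjunction trivially entails the inclusive one). Combining $E_1\vee E_2$ with the two negations $\neg E_1$ and $\neg E_2$ obtained in the previous step gives $\bot$ by propositional reasoning, which is the desired conclusion $K^1\varphi\vd_{\sf tS4}\bot$.

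I expect essentially no obstacle here: the substantive epistemic work—eliminating Friday via Theorem \ref{thm:surprise in tT} and Wednesday via Theorem \ref{thm:surprise in tS4}—has already been carried out, and this final step is only the bookkeeping that collects those results against the surviving disjunct of $\varphi$. Indeed, an even shorter route is the one flagged in the author's remark: the proof of Theorem \ref{thm:surprise in tS4} itself already derives $E_1$ at line~5 (from $\neg E_2$ and $E_1\vee E_2$) and $\neg E_1$ at line~10, so those two lines alone constitute a proof of $\bot$ from $K^1\varphi$. The only point requiring a moment's care is the transfer of the {\sf tT}-derivation of $\neg E_2$ into {\sf tS4}, which is immediate from the fact that {\sf tS4} extends {\sf tT}.

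\begin{proof}
By Theorem \ref{thm:surprise in tS4}, $K^1\varphi\vd_{\sf tS4}\neg E_1$. By Theorem \ref{thm:surprise in tT}, $K^1\varphi\vd_{\sf tT}\neg E_2$, and hence $K^1\varphi\vd_{\sf tS4}\neg E_2$ since {\sf tS4} extends {\sf tT}. Moreover, from $K^1\varphi$ we obtain $\varphi$ by axiom tT, and then $E_1\vee E_2$ by propositional reasoning (using the conjunct $E_1\veebar E_2$ of $\varphi$). Combining $E_1\vee E_2$ with $\neg E_1$ and $\neg E_2$ yields $\bot$ by propositional reasoning.\qed
\end{proof}
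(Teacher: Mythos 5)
Your proof is correct and matches the paper's own argument: the paper justifies this theorem precisely by observing that lines 5 and 10 of the proof of Theorem \ref{thm:surprise in tS4} already yield $E_1$ and $\neg E_1$ from the premise $K^1\varphi$, which is the same combination of Theorem \ref{thm:surprise in tT}, the disjunction $E_1\vee E_2$ extracted from $\varphi$ via axiom tT, and the tS4 elimination of $E_1$ that you assemble. Your repackaging (collecting $\neg E_1$, $\neg E_2$, and $E_1\vee E_2$ rather than $E_1$ and $\neg E_1$) is only a trivial propositional rearrangement, and your remark that {\sf tS4}-derivability subsumes {\sf tT}-derivability is the correct justification for the transfer.
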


In order to avoid the paradox Quine \cite{Quine1953} denies the premise $K^1\varphi$. Indeed, by the deduction theorem, we obtain $ \vdash_{\sf tS4} \neg K^1 \varphi$. This result agrees with Quine's opinion that students do not know that the announcement is true. On the other hand, if one accepts the principle that ``if someone is informed of a proposition, then he knows it" (as explicitly formulated by Wright and Sudbury in \cite{WrightSudbury}), then the above result leads to a contradiction. (An extended discussion of Quine's solution can be found in \cite{Cheung2013}.)

Kripke \cite{Kripke2011} also do not find Quine's solution satisfactory. He says ``But often, I think, you do \textit{know} something simply because a good teacher has told you so." He instead denies the monotonicity principle Mon. Quoting Kripke, ``You may know something now, but, on the basis of further evidence -- without any loss of evidence or forgetfulness -- be led to fall into doubt about it later." The monotonicity principle of knowledge also rejected by Sorensen \cite{Sorensen2014} and Williamson \cite{Williamson2000}. For example, Williamson says ``[the students'] memory of examinationless days would undermine their earlier knowledge of the truth of the announcement, like misleading evidence." The monotonicity principle is also considered by Binkley \cite{Binkley1968}, Wright and Sudbury  \cite{WrightSudbury} in the context of logics of belief. Wright and Sudbury rejected the monotonicity principle too, ``[the principle] is, manifestly, \textit{not} analytic of reasonable belief. Good reason to believe $p$ may lapse as more information becomes available; or stronger reason to believe the contrary may emerge" (cf. \cite{WrightSudbury}).

However, one can observe that the monotonicity principle Mon does not play any role in the students' argument in our formulations (see the proofs of Theorems \ref{thm:surprise in tT}, \ref{thm:surprise in tS4}).
There is another view in which the paradox is resolved by rejecting the KK principle, here axiom t4 (see e.g. \cite{Harrison1969, McLellandChihara1975}).\footnote{But this is criticized by Sorensen in \cite{Sorensen1988}.}

\section{The Surprise Test Paradox in G\"{o}del-L\"{o}b provability logic}\label{Appendix:Surprise test paradox in GL}
In this appendix we consider a version of the Surprise Test Paradox in which  a surprise event is interpreted in terms of deducibility. Specifically, a test is a surprise for a student if and only if the student cannot deduce logically beforehand the date of the test. One of the advantages of this meaning of surprise, as mentioned also by Fitch \cite{Fitch1964}, is that there is no need to use epistemological and temporal concepts.  We consider deducibility in $\PA$ and instead of formalizing the paradox in $\PA$ we give a formalization in the provability logic $\GL$.

Let us consider the following two-day version of the paradox (the argument for $n$-day version is similar):
\begin{quote}
``There will be exactly one test on Wednesday or Friday next week and its date will not be deducible from this statement."
\end{quote}
As discussed by Shaw \cite{Shaw} the paradox appears only if the teacher's announcement is self-reference. Regarding deducibility in \PA, Fitch \cite{Fitch1964} formalized the paradox in $\PA$. Since all valid statements about the provability predicate of $\PA$ can be described by the provability logic $\GL$ (see Theorem \ref{thm:Solovay Arithmetical Completeness GL}), we formalize the paradox in $\GL$. Teacher's announcement can be expressed in $\GL$ by the following fixed point equation:

\begin{equation}\label{eq:Fitch(7)}
D \leftrightarrow [(E_1 \wedge \neg\b(D \rightarrow E_1)) \veebar (E_2 \wedge\neg\b(D \wedge\neg E_1 \rightarrow E_2))],
\end{equation}

where $E_1$ and $E_2$  denote the sentences ``you will have a test on Wednesday" and ``you will have a test on Friday" respectively, and $\veebar$ denotes the exclusive or. The structure of the fixed point $D$ in (\ref{eq:Fitch(7)}) is not important here, but it can be computed by algorithms presented in \cite{Boolos1993, Smorynski1985, SambinValentini1982}. Following Fitch we show that the teacher's announcement $D$ in (\ref{eq:Fitch(7)}) is self-contradictory (we sketch only a few of the formal details here).

\begin{theorem}\label{thm:Fitch(7) is refutable in GL}
The fixed point $D$ in (\ref{eq:Fitch(7)}) is refutable in $\GL$.
\end{theorem}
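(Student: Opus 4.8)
The plan is to follow Fitch's \emph{last-day-first} elimination strategy and show directly that $\GL\vd\neg D$. Write $A_1$ for the Wednesday disjunct $E_1\w\neg\b(D\r E_1)$ and $A_2$ for the Friday disjunct $E_2\w\neg\b(D\w\neg E_1\r E_2)$, so that the fixed point equation (\ref{eq:Fitch(7)}) reads $\GL\vd D\leftrightarrow(A_1\veebar A_2)$. The engine of the argument is that each ``surprise'' conjunct of the form $\neg\b(\cdots)$ can be refuted inside $\GL$ as soon as the corresponding implication has been established as a theorem, because $\GL$ is normal and so admits the necessitation rule Nec: from $\GL\vd X$ we obtain $\GL\vd\b X$, which flatly contradicts $\neg\b X$.

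Concretely, I would first eliminate Friday. Assuming $D$ together with $\neg E_1$, the disjunct $A_1$ is false because its first conjunct $E_1$ fails, so the exclusive disjunction in (\ref{eq:Fitch(7)}) forces $A_2$, hence $E_2$; thus purely propositional reasoning from the fixed point equation gives $\GL\vd D\w\neg E_1\r E_2$. Applying Nec yields $\GL\vd\b(D\w\neg E_1\r E_2)$, which says exactly that the second conjunct of $A_2$ fails, so $\GL\vd\neg A_2$. Feeding this back into (\ref{eq:Fitch(7)}) collapses the exclusive disjunction to its Wednesday disjunct, $\GL\vd D\leftrightarrow A_1$, and in particular $\GL\vd D\r E_1$. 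A second application of Nec gives $\GL\vd\b(D\r E_1)$, which refutes the surprise conjunct of $A_1$, so $\GL\vd\neg A_1$. Combined with $\GL\vd D\leftrightarrow A_1$ this delivers $\GL\vd\neg D$, as required.

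The step I expect to need the most care is the bookkeeping that turns the informal ``eliminate the last day first'' heuristic into the correct \emph{order} of necessitations: one must derive and box the Friday implication $D\w\neg E_1\r E_2$, and only then simplify the disjunction and derive the Wednesday implication $D\r E_1$ — boxing them in the reverse order would not falsify the right conjuncts. One point worth flagging explicitly is that the refutation itself uses only propositional logic together with Nec; the L\"{o}b axiom enters solely through the \emph{existence} of the fixed point $D$ making (\ref{eq:Fitch(7)}) a $\GL$-theorem (via the De Jongh--Sambin Theorem \ref{thm:Fixed Point Theorem GL}), not through the derivation of the contradiction. Finally, the $n$-day case is routine: one eliminates the days from the last backwards, at stage $k$ deriving and then boxing the implication whose antecedent already assumes that none of days $1,\ldots,k-1$ carries the test, each such box in turn falsifying the $k$-th surprise conjunct until the whole statement collapses to $\neg D$.
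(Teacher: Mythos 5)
Your proof is correct and follows essentially the same route as the paper: derive $D\wedge\neg E_1\r E_2$ propositionally from the fixed point equation, box it by Nec to refute the Friday disjunct, then derive and box $D\r E_1$ to refute the Wednesday disjunct, and conclude $\neg D$. Your observation that L\"{o}b's axiom enters only through the existence of the fixed point (via De Jongh--Sambin), not through the refutation itself, is also accurate for this theorem.
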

\begin{proof}
The proof is as follows:
\begin{enumerate}
\item $D \leftrightarrow [(E_1 \wedge \neg\b(D \rightarrow E_1)) \veebar (E_2 \wedge \neg\b(D \wedge\neg E_1 \rightarrow E_2))]$, By De Jongh-Sambin Fixed Point Theorem \ref{thm:Fixed Point Theorem GL}
\item $D \rightarrow [(E_1 \wedge \neg\b(D \rightarrow E_1)) \vee (E_2 \wedge \neg\b(D \wedge\neg E_1 \rightarrow E_2))]$, from 1 by propositional reasoning
\item $D \wedge (\neg E_1 \vee \b(D \rightarrow E_1)) \r (E_2 \wedge \neg\b(D \wedge\neg E_1 \rightarrow E_2)$, from 2 by propositional reasoning
\item $D \wedge \neg E_1 \r E_2$, from 3 by propositional reasoning
\item $\b(D \wedge \neg E_1 \r E_2)$, from 4 by Nec
\item $D\wedge (\neg E_2 \vee \b(D \wedge\neg E_1 \rightarrow E_2)) \r (E_1 \wedge \neg\b(D \rightarrow E_1))$, from 2 by propositional reasoning
\item $D \r E_1$, from 5, 6 by propositional reasoning
\item $\b(D \r E_1)$, from 7 by Nec
\item $\neg E_1 \vee \b(D \r E_1)$, from 8 by propositional reasoning
\item $\neg E_2 \vee \b(D\wedge \neg E_1 \r E_2)$, from 5 by propositional reasoning
\item $\neg(E_1 \wedge \neg\b(D \rightarrow E_1)) \wedge \neg(E_2 \wedge \neg\b(D \wedge E_1 \rightarrow E_2))$, from 9, 10 by propositional reasoning
\item $\neg D$, from 11 by propositional reasoning.\qed
\end{enumerate}
\end{proof}

Since the teacher's announcement $D$ is refutable, it implies everything. Particularly, it implies that the test will not be held on Friday. Fitch tried to resolve the paradox by reinterpreting the surprise so that ``what is intended in practice is not that the surprise event will be a surprise \textit{whenever} it occurs, but only when it occurs on some day \textit{other than the last}".  His reformulation of the paradox can be expressed in $\GL$ by the following fixed point equation:

\begin{equation}\label{eq:Fitch(16)}
D \leftrightarrow [(E_1 \wedge \neg\b(D \rightarrow E_1)) \veebar (E_2 \wedge \neg\b(D  \rightarrow E_2))],
\end{equation}

He claims that the fixed point $D$ in (\ref{eq:Fitch(16)}) is ``apparently self-consistent" in \PA. Nevertheless, Kripke \cite{Kripke2011} (in a letter to Fitch) shows that the statement of Fitch's resolution is actually refutable in \PA, and therefore it is not resolved as stated by Fitch. The argument of Kripke can be formalized in $\GL$ as follows.

\begin{theorem}\label{thm:Fitch(16) is refutable in GL}
The fixed point $D$ in (\ref{eq:Fitch(16)}) is refutable in $\GL$.
\end{theorem}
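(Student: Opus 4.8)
The plan is to show $\GL\vd\neg D$ by proving that \emph{both} $\b(D\r E_1)$ and $\b(D\r E_2)$ are theorems of $\GL$; once this is established, each ``surprise clause'' $\neg\b(D\r E_i)$ is refutable, so both disjuncts of the fixed point equation (\ref{eq:Fitch(16)}) are refutable, and the biconditional immediately yields $\neg D$. Writing $P_i:=E_i\w\neg\b(D\r E_i)$, the first easy observations are that $D\r(P_1\vee P_2)$ (weakening the exclusive disjunction) and that, reasoning under the hypothesis $\b(D\r E_1)$, the clause $P_1$ collapses to a contradiction (its conjunct $\neg\b(D\r E_1)$ is false), so $P_1\veebar P_2$ reduces to $P_2$ and hence $D\r P_2$, in particular $D\r E_2$. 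Thus $\GL\vd\b(D\r E_1)\r(D\r E_2)$, and symmetrically $\GL\vd\b(D\r E_2)\r(D\r E_1)$.

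The heart of the argument is to convert the first of these into a boxed implication suitable for L\"{o}b's theorem. From $\b(D\r E_1)\r(D\r E_2)$ I would apply Nec and axiom K to obtain $\b\b(D\r E_1)\r\b(D\r E_2)$, and then use transitivity (axiom $4$, available since $\GL$ extends ${\sf K4}$) in the form $\b(D\r E_1)\r\b\b(D\r E_1)$ to conclude $\b(D\r E_1)\r\b(D\r E_2)$. Combining this with the symmetric fact $\b(D\r E_2)\r(D\r E_1)$ gives $\b(D\r E_1)\r(D\r E_1)$, i.e.\ $\b A\r A$ for $A=(D\r E_1)$.

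Now L\"{o}b's theorem finishes the proof: from $\GL\vd\b(D\r E_1)\r(D\r E_1)$ the L\"{o}b rule (Nec, the L\"{o}b axiom, and MP) yields $\GL\vd D\r E_1$. Applying Nec gives $\GL\vd\b(D\r E_1)$, and feeding this theorem back into $\b(D\r E_1)\r(D\r E_2)$ produces $\GL\vd D\r E_2$, whence $\GL\vd\b(D\r E_2)$ by Nec. With both boxes provable, $\neg\b(D\r E_1)$ and $\neg\b(D\r E_2)$, and therefore $P_1$ and $P_2$, are refutable; hence $P_1\veebar P_2$ is refutable and (\ref{eq:Fitch(16)}) gives $\GL\vd\neg D$.

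I expect the main obstacle to be locating the L\"{o}b step correctly --- recognizing that the symmetric ``last-day'' form of the surprise clause in (\ref{eq:Fitch(16)}) creates a genuine cycle between $\b(D\r E_1)$ and $\b(D\r E_2)$ that only the L\"{o}b axiom can break. This is exactly what distinguishes (\ref{eq:Fitch(16)}) from (\ref{eq:Fitch(7)}): the refutation of (\ref{eq:Fitch(7)}) needed only Nec and propositional reasoning, whereas here pure ${\sf K4}$ reasoning yields merely $\b(D\r E_1)\leftrightarrow\b(D\r E_2)$ together with $D\r\neg\b(D\r E_i)$, which is consistent; upward well-foundedness (the L\"{o}b axiom) is indispensable, matching Kripke's observation that Fitch's ``apparently self-consistent'' reformulation is in fact refutable in $\PA$.
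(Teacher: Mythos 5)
Your proof is correct, and it reaches the refutation by a genuinely different decomposition than the paper's. The paper (formalizing Kripke's argument) applies L\"{o}b's theorem once, directly to $\neg D$: from the propositional tautology $\neg D\r(D\r E_i)$ one gets $\b\neg D\r\b(D\r E_i)$ by Nec and K, so under the hypothesis $\b\neg D$ both surprise conjuncts $\neg\b(D\r E_i)$ fail, both disjuncts of (\ref{eq:Fitch(16)}) are false, and $\b\neg D\r\neg D$ follows; the L\"{o}b rule then yields $\neg D$ immediately. You instead apply L\"{o}b to $D\r E_1$, first extracting the cycle $\b(D\r E_1)\r(D\r E_2)$ and $\b(D\r E_2)\r(D\r E_1)$ from the fixed point and then closing it into $\b(D\r E_1)\r(D\r E_1)$ via Nec, K and axiom 4 (axiom 4 is a theorem of $\GL$, so its use is harmless). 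Your route is longer, but it makes the backward-induction structure of the paradox explicit: it establishes $\GL\vd D\r E_1$ and $\GL\vd D\r E_2$ --- that the two ``no-surprise'' boxes are themselves theorems --- \emph{before} concluding $\neg D$, whereas in the paper's proof these facts only follow trivially once $\neg D$ is in hand; the paper's version buys economy (one tautology, one Nec/K step, one L\"{o}b application) and extends to the $n$-day case with no extra bookkeeping. Your closing remark is also accurate: pure ${\sf K4}$ reasoning cannot refute $D$ (a two-world transitive cluster, with $D,E_1$ true and $E_2$ false at one world and $D,E_2$ true and $E_1$ false at the other, validates (\ref{eq:Fitch(16)}) while making $D$ true), so the L\"{o}b step that both you and the paper isolate is indeed indispensable, exactly as it is absent from the refutation of (\ref{eq:Fitch(7)}).
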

\begin{proof}
The proof is as follows:
\begin{enumerate}
\item $D \leftrightarrow [(E_1 \wedge \neg\b(D \rightarrow E_1)) \veebar (E_2 \wedge \neg\b(D \rightarrow E_2))]$, By De Jongh-Sambin Fixed Point Theorem \ref{thm:Fixed Point Theorem GL}
\item $\neg D \r(D \r E_i)$, a propositional tautology, where $i=1, 2$
\item $\b\neg D \r\b(D \r E_i)$, from 2 by modal reasoning, where $i=1, 2$
\item $\b\neg D \r(\neg E_i \vee \b(D \rightarrow E_i))$, from 3 by propositional reasoning, where $i=1, 2$
\item $\b\neg D \r\neg(E_1 \wedge \neg\b(D \rightarrow E_1)) \wedge\neg (E_2 \wedge \neg\b(D \rightarrow E_2))$, from 4 by propositional reasoning
\item $\b\neg D\r \neg D$, from 1, 5 by propositional reasoning,
\item $\b(\b\neg D\r \neg D)$, from 6 by Nec
\item $\b(\b\neg D\r \neg D)\r \b\neg D$, an instance of L\"{o}b scheme
\item $\b\neg D$, from 7, 8 by MP
\item $\neg D$, from 6, 9 by MP.\qed
\end{enumerate}
\end{proof}

Fitch also show the relation between the following version of the Surprise Test Paradox and G\"{o}del's first incompleteness theorem:
\begin{quote}
``If this statement is deducible, then there will be exactly one test on Wednesday or Friday next week and its date will not be deducible from this statement."
\end{quote}
His reformulation of the paradox can be expressed in $\GL$ by the following fixed point equation:

\begin{equation}\label{eq:Fitch(18)}
D \leftrightarrow [\b D\r (E_1 \wedge \neg\b(D \rightarrow E_1)) \veebar (E_2 \wedge \neg\b(D \wedge\neg E_1 \rightarrow E_2))],
\end{equation}

\begin{theorem}\label{thm:Fitch(18)}
For the fixed point $D$ in (\ref{eq:Fitch(18)}) we have $\GL\vdash D \leftrightarrow \neg \b D$ and $\GL\not\vdash D$.
\end{theorem}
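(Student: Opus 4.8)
The plan is to prove the two claims in turn: first establish the equivalence $\GL\vdash D\leftrightarrow\neg\b D$, then deduce $\GL\not\vdash D$ from it together with the consistency of $\GL$. Writing $C_1:=E_1\wedge\neg\b(D\r E_1)$ and $C_2:=E_2\wedge\neg\b(D\wedge\neg E_1\r E_2)$, the fixed point equation (\ref{eq:Fitch(18)}) reads $D\leftrightarrow(\b D\r(C_1\veebar C_2))$. By the De Jongh-Sambin Fixed Point Theorem \ref{thm:Fixed Point Theorem GL} this is a \emph{theorem} of $\GL$, so Necessitation may legitimately be applied to anything derived from it.

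The easy half is $\GL\vdash\neg\b D\r D$: from (\ref{eq:Fitch(18)}) we have $(\b D\r(C_1\veebar C_2))\r D$, and $\neg\b D$ makes the antecedent propositionally true, so $D$ follows. This is one half of the equivalence; the other half $D\r\neg\b D$ is the contrapositive of $\b D\r\neg D$, so it remains to prove $\GL\vdash\b D\r\neg D$.

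The main work, and the chief obstacle, is exactly $\GL\vdash\b D\r\neg D$, which amounts to internalising the refutation of Theorem \ref{thm:Fitch(7) is refutable in GL} under the hypothesis $\b D$. First I would record the consequence $\GL\vdash D\wedge\b D\r(C_1\veebar C_2)$ of (\ref{eq:Fitch(18)}). Since $C_1\r E_1$ and $C_2\r E_2$, adjoining $\neg E_1$ forces $C_2$ and hence $E_2$, giving $\GL\vdash\b D\r(D\wedge\neg E_1\r E_2)$. Applying Nec and the K axiom, then replacing the resulting $\b\b D$ by $\b D$ via axiom 4 ($\b D\r\b\b D$), yields
\[\GL\vdash\b D\r\b(D\wedge\neg E_1\r E_2),\]
which says $\b D\r\neg C_2$. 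Feeding this back, $D\wedge\b D\r(C_1\veebar C_2)$ together with $\neg C_2$ forces $C_1$, hence $E_1$, so $\GL\vdash\b D\r(D\r E_1)$; the same Nec/K/axiom-4 manoeuvre gives $\GL\vdash\b D\r\b(D\r E_1)$, i.e. $\b D\r\neg C_1$. Thus under $\b D$ both disjuncts $C_1,C_2$ are false, so $\neg(C_1\veebar C_2)$; but once $\b D$ holds, (\ref{eq:Fitch(18)}) makes $D$ equivalent to $C_1\veebar C_2$, whence $\b D\r\neg D$. Combining the two halves gives $\GL\vdash D\leftrightarrow\neg\b D$. (It is worth noting that Löb's axiom is never invoked here; transitivity, axiom 4, suffices, exactly as in Theorem \ref{thm:Fitch(7) is refutable in GL}.)

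Finally, for $\GL\not\vdash D$, I would argue by contradiction: if $\GL\vdash D$, then Nec gives $\GL\vdash\b D$, while $\GL\vdash D\leftrightarrow\neg\b D$ gives $\GL\vdash\neg\b D$, so $\GL$ proves both $\b D$ and $\neg\b D$ and is inconsistent, contradicting the consistency of $\GL$ (guaranteed, for instance, by Solovay's arithmetical completeness, Theorem \ref{thm:Solovay Arithmetical Completeness GL}). Hence $\GL\not\vdash D$, which exhibits $D$ as a G\"odel-style sentence and completes the connection with the first incompleteness theorem.
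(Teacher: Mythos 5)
Your proposal is correct and takes essentially the same route as the paper's own proof: the identical decomposition into the trivial half $\neg\b D\r D$ and the main half $\b D\r\neg D$, the latter obtained exactly as in the paper by deriving $\b D\r(D\wedge\neg E_1\r E_2)$ and $\b D\r(D\r E_1)$, boxing each via Nec and K and discharging $\b\b D$ with axiom 4 (mirroring the stages of Theorem \ref{thm:Fitch(7) is refutable in GL} under the hypothesis $\b D$), followed by the same Nec-plus-consistency argument for $\GL\not\vdash D$. Your aside that L\"ob's axiom is never invoked (so the equivalence already holds in ${\sf K4}$) is accurate but does not change the structure, which coincides with the paper's.
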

\begin{proof}
The proof is as follows:
\begin{enumerate}
\item $D \leftrightarrow [\b D\r(E_1 \wedge \neg\b(D \rightarrow E_1)) \veebar (E_2 \wedge \neg\b(D \wedge\neg E_1 \rightarrow E_2))]$, By De Jongh-Sambin Fixed Point Theorem \ref{thm:Fixed Point Theorem GL}
\item $\b D\r[D\r (E_1 \wedge \neg\b(D \rightarrow E_1)) \vee (E_2 \wedge \neg\b(D \wedge\neg E_1 \rightarrow E_2))]$, from 1 by propositional reasoning
\item $\b D\r (D \wedge \neg E_1 \r E_2)$, similar to stages 2-4 of the proof of Theorem \ref{thm:Fitch(7) is refutable in GL}
\item $\b\b D\r \b(D \wedge \neg E_1 \r E_2)$, from 3 by modal reasoning
\item $\b D \r \b\b D$, an instance of axiom 4
\item $\b D\r \b(D \wedge \neg E_1 \r E_2)$, from 4, 5 by propositional reasoning
\item $\b D\r(D \r E_1)$, similar to stages 5-7 of the proof of Theorem \ref{thm:Fitch(7) is refutable in GL}
\item $\b\b D\r\b(D \r E_1)$, from 7 by modal reasoning
\item $\b D\r\b(D \r E_1)$, from 8 by modal reasoning
\item $\b D\r \neg D$, similar to stages 7-12 of the proof of Theorem \ref{thm:Fitch(7) is refutable in GL}
\item $[\b D\r(E_1 \wedge \neg\b(D \rightarrow E_1)) \veebar (E_2 \wedge \neg\b(D \wedge\neg E_1 \rightarrow E_2))]\r D$, from 1 by propositional reasoning
\item $\neg \b D\r D$, from 11 by propositional reasoning
\item $D \leftrightarrow \neg \b D$, from 10, 12 by propositional reasoning.
\end{enumerate}
Finally note that if $\GL\vdash D$, then by Nec we get $\GL\vdash\b D$. On the other hand, from $\GL\vdash D \leftrightarrow \neg \b D$ we obtain $\GL\vdash\neg\b D$, a contradiction. \qed
\end{proof}

Thus, similar to G\"{o}del's sentence (see (\ref{eq:Godel sentence}) in the proof of Theorem \ref{thm:Incompleteness Theorem}), the fixed point $D$ in (\ref{eq:Fitch(18)}) states its own unprovability.

Kritchman and Raz \cite{KritchmanRaz} also show the relationship between the paradox and G\"{o}del's second incompleteness theorem. They formalized a version of the paradox and conclude that ``if the students believe in the consistency of $T+S$, the exam cannot be held on Friday [i.e. the last day], ... However, the exam can be held on any other day of the week because  [by the second incompleteness theorem] the students cannot prove the consistency of $T+S$." (where $T$ can be taken to be $\PA$, and $S$ is the statement of the teacher's announcement).

\section{Semantics of $\QLP$}\label{Appendix:Semantics of QLP}
Fitting in \cite{Fitting2008} presented Kripke-style possible world semantics for $\QLP$. His semantics is an extension of the $\LP$ possible world semantics of  \cite{Fitting2005} by first order semantics. In this part, we introduce models for $\QLP$ based on Mkrtychev models (M-model for short) for $\LP$ \cite{Mkrtychev1997}, that are actually single-world Fitting models \cite{Fitting2008} of $\QLP$. We use the same notations of Fitting models of $\QLP$ here.

\begin{definition}\label{def:M-model QLP^-}
An M-model for $\QLP^-_\mathcal{F}$ is a quadruple $\M=(D,\I,\E,\V)$ such that
\begin{enumerate}
\item  $D$ is the \textit{domain} of the model, a non-empty set (of \textit{reasons}).

\item  $\I$ is an  \textit{interpretation function}  mapping each term operation to an operator on $D$ as follows.
 \begin{enumerate}
 \item $\I$ assigns to each primitive function symbol $f$ of arity $n$ an $n$-place operator $f^\I : D^n \r D$; in particular, $\I$ assigns to each constant
$c$ a member $c^\I$ of $D$.
\item $\I$ assigns to the verifier operation $!$ a mapping $!^\I : D \r D$.
\item $\I$ assigns to the application operation $\cdot$ a binary operation $\cdot^\I : D \times D\r D$, and to the sum operation $+$ a binary operation $+^\I : D\times D\r D$.
 \end{enumerate}
Given a domain $D$, a valuation $v$ is defined as a mapping from justification variables to $D$. Given a domain $D$ and an interpretation $\I$, the valuation $v$ can be extended to all terms as follows (we use the notation $t^v$ instead of $v(t)$):
\begin{enumerate}
\item $x^v=v(x)$, for variable $x$,
\item $f(t_1,\ldots,t_n)^v = f^\I(t_1^v,\ldots,t_n^v)$, for primitive function symbol $f$ of arity $n$,
\item $(t \cdot s)^v = t^v \cdot^\I s^v$,
\item $(t + s)^v = t^v +^\I s^v$,
\item $(!t)^v =\ !^\I t^v$.
\end{enumerate}
\item  $\E$ is an \textit{evidence function}, that assigns to each reason  in the domain $D$ and to each valuation a set of formulas, and satisfying the following conditions. For all formulas $A$ and $B$, all reasons $r$ and $r'$ in $D$, and all valuations $v$:

 \begin{enumerate}
 \item Application: If $A\r B\in {\mathcal E}(r,v)$ and $A\in {\mathcal E}(r',v)$, then $B\in {\mathcal E}(r\cdot^\I r',v)$.
 \item Sum: ${\mathcal E}(r,v)\cup {\mathcal E}(r',v)\subseteq {\mathcal E}(r+^\I r',v)$.
  \item Proof checker: If $A\in {\mathcal E}(r,v)$, then $t:A\in {\mathcal E}(!^\I t^v,v)$.
  \item Primitive proof term: $A\in \E(f(x_1,\ldots,x_n)^v,v)$, for every $f(x_1,\ldots,x_n):A\in\mathcal{F}$.
   \end{enumerate}

\item $\V$ is a \textit{truth assignment}, i.e  a mapping from propositional variables to set of truth values $\{0,1\}$.
\end{enumerate}
\end{definition}

\begin{definition}\label{def:M-model QLP}
An M-model $\M=(D,\I,\E,\V)$ for $\QLP_\mathcal{F}$ is an M-model for  $\QLP^-_\mathcal{F}$  meeting the following conditions:
\begin{enumerate}
\item $\I$ assigns to the uniform verifier $\forall$ a mapping $\forall^\I : D \times D \r D$.
For an interpretation $\I$, and a valuation $v$ put $(t \forall x)^v = t^v \forall^\I x^v$.
\item If $A\in\E(t^{v {x \choose r}},v {x \choose r})$ for every $r\in D$, then $(\forall x)A\in\E((t \forall x)^v,v)$.
\item If $v$ and $w$ agree on the free variables of $A$ and $r\in D$, then $A\in\E(r,v)$ iff $A\in\E(r,w)$.
\end{enumerate}
\end{definition}

\begin{definition}
A valuation $w$ is an $x$-variant of a valuation $v$ if $w$ is identical to $v$ except possibly on $x$. The notation $v{x \choose r}$ denotes the $x$-variant of $v$ that maps $x$ to $r$.
 \end{definition}

\begin{definition}\label{def:forcing M-model QLP^-}
 The forcing relation $\Vdash_v$, for an M-model $\M=(D,\I,\E,\V)$ of $\QLP^-_\mathcal{F}$ and a valuation $v$ is defined in
the following way:
\begin{enumerate}
\item $\M\Vdash_v p$ if{f} $\V(p)=1$, for propositional variable $p$,
 \item $\M \not\Vdash_v\bot$,
 \item $\M\Vdash_v \neg A$ if{f} $\M\not\Vdash_v A$,
 \item $\M\Vdash_v A\vee B$ if{f} $\M\Vdash_v A$ or $\M\Vdash_v B$,
 \item $\M\Vdash_v A\wedge B$ if{f} $\M\Vdash_v A$ and $\M\Vdash_v B$,
 \item $\M\Vdash_v A\r B$ if{f} $\M\not\Vdash_v A$ or $\M\Vdash_v B$,
 \item $\M\Vdash_v (\forall x) A$ if{f} $\M\Vdash_{v {x \choose r}} A$, for every $r\in D$,
  \item $\M\Vdash_v (\exists x) A$ if{f} $\M\Vdash_{v {x \choose r}} A$, for some  $r\in D$,
 \item $\M\Vdash_v t:A$ if{f} $A\in {\mathcal E}(t^v,v)$ and $\M\Vdash_v A$.
\end{enumerate}
A formula $A$ is valid in an M-model $\M$ if $\M\Vdash_v A$ for every valuation $v$.
\end{definition}

\begin{definition}\label{def:strong M-model QLP}
An M-model $\M=(D,\I,\E,\V)$ for $\QLP_\mathcal{F}$ is called strong if it satisfies the following condition: for every valuation $v$, if $\M \Vdash_v A$, then $A\in\E(r,v)$, for some $r\in D$.
 \end{definition}

Let us compare this condition with the \textit{fully explanatory} condition of Fitting models of $\QLP$ \cite{Fitting2008}. The fully explanatory condition says that if a proposition is believed (at a state of a model), then it has a reason (at that state). While the condition of the above definition says that if a proposition is true (in a model), then it has a reason (in that model). This condition is plausible only for idealized agents.

It is not difficult to show the following.

\begin{lemma}\label{lem:truth agree on the free variables M-model QLP}
Suppose $\M=(D,\I,\E,\V)$ is an M-model for $\QLP_\mathcal{F}$. If the valuations $v$ and $w$ agree on the free variables of $A$, then $\M\Vdash_v A$ iff $\M\Vdash_w A$.
\end{lemma}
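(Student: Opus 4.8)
The plan is to argue by induction on the structure of the formula $A$, following exactly the clauses of Definition~\ref{def:forcing M-model QLP^-}. Before starting the formula induction I would isolate a companion fact about terms: whenever two valuations assign the same value to every variable occurring in a term $t$, they give $t$ the same value, i.e. $t^v = t^w$. This is proved by a short induction on the build-up of $t$ using the evaluation clauses, since a valuation enters the computation of $t^v$ only through the clause $x^v = v(x)$, while the operations $f^\I$, $\cdot^\I$, $+^\I$, $!^\I$ and $\forall^\I$ are fixed by $\I$ once and for all.

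With this in hand, the formula induction runs smoothly on the easy cases. The base case $p$ is immediate because $\M \Vdash_v p$ holds iff $\V(p)=1$, a condition that does not mention $v$ at all, and $\bot$ is never forced. For the Boolean connectives $\neg,\wedge,\vee,\r$ the free variables of each immediate subformula are contained in those of $A$, so $v$ and $w$ still agree on them and the induction hypothesis applies directly. For the quantifier cases $(\forall x)A$ and $(\exists x)A$ I would use that the free variables of the quantified formula are those of $A$ with $x$ removed: for any $r\in D$ the $x$-variants $v{x \choose r}$ and $w{x \choose r}$ agree on $x$ (both send it to $r$) and on every other free variable of $A$, hence they agree on all free variables of $A$, and the induction hypothesis gives $\M \Vdash_{v{x \choose r}} A$ iff $\M \Vdash_{w{x \choose r}} A$; quantifying over $r$ yields the desired equivalence.

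The essential case is $t:A$, where $\M \Vdash_v t:A$ iff $A \in \E(t^v,v)$ and $\M \Vdash_v A$. Since $v$ and $w$ agree on the free variables of $t:A$, which are exactly those of $t$ together with those of $A$, the term fact supplies a common reason $r := t^v = t^w$, the induction hypothesis gives $\M \Vdash_v A$ iff $\M \Vdash_w A$, and condition~3 of Definition~\ref{def:M-model QLP} (agreement of the evidence function on valuations coinciding on the free variables of $A$) gives $A \in \E(r,v)$ iff $A \in \E(r,w)$. Combining the three equivalences closes this case and completes the induction.

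The step I expect to require the most care is the term fact in the presence of the uniform verifier. Because $(t\forall x)$ binds $x$ yet its value is computed by $(t\forall x)^v = t^v \forall^\I x^v$, the value of a term can in principle depend on a variable bound inside it and hence \emph{not} free in the surrounding formula $t:A$. I would therefore state the term-agreement claim for agreement on \emph{all} variables occurring in $t$, and then reconcile this with the formula induction by checking that such a bound occurrence cannot actually break the coincidence in the $t:A$ case: agreement of $v$ and $w$ on the free variables of $t:A$ must be shown to force $t^v = t^w$ despite the binding convention for $\forall$. This interaction between the uniform verifier's binding and its evaluation clause is the delicate point; everything else is routine bookkeeping with free-variable sets.
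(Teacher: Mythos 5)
Your induction skeleton---a term-coincidence fact plus induction along the clauses of Definition~\ref{def:forcing M-model QLP^-}---is exactly the intended argument (the paper itself offers no proof of this lemma, only the remark that it ``is not difficult''), and for formulas whose terms contain no uniform verifier your argument is complete. But the step you defer at the end is not merely delicate: it fails. Nothing in Definition~\ref{def:M-model QLP} prevents $(t\forall x)^v = t^v\,\forall^\I\,x^v$ from genuinely depending on $v(x)$, and then agreement of $v$ and $w$ on the free variables of $t:A$ does \emph{not} force $t^v=t^w$. Concretely, take $D=\{0,1\}$, $\forall^\I(a,b)=b$, $a\cdot^\I b=0$, $a+^\I b=\min(a,b)$, $!^\I a=a$, $\mathcal{F}=\emptyset$, $\V(p)=1$, and let $\E(0,v)$ be the set of all formulas and $\E(1,v)$ the set of all formulas other than propositional variables, independently of $v$. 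All closure conditions of Definitions~\ref{def:M-model QLP^-} and~\ref{def:M-model QLP} hold: Application always lands in $\E(0,v)$, which is everything; Sum holds because $\E(0,v)\supseteq\E(1,v)$ and $+^\I=\min$; any formula of the form $t:A$ and any formula of the form $(\forall x)A$ is not a propositional variable and hence lies in every $\E(r,v)$, which settles Proof checker and condition~2; and condition~3 holds because $\E$ ignores the valuation. Now let $A=(y\forall x):p$, and let $v,w$ both send $y$ to $1$ while $v(x)=0$ and $w(x)=1$. The only free variable of $A$ is $y$, since the occurrence of $x$ in $(y\forall x)$ is bound by the paper's convention; yet $(y\forall x)^v=\forall^\I(1,0)=0$ while $(y\forall x)^w=\forall^\I(1,1)=1$, so $\M\Vdash_v A$ (as $p\in\E(0,v)$ and $\V(p)=1$) but $\M\not\Vdash_w A$ (as $p\notin\E(1,w)$). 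So the lemma, read literally, is false, and no argument can close the gap you flagged.

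The repair therefore has to happen in the statement or in the semantics, not in the proof. Either strengthen the hypothesis of the lemma (and, correspondingly, clause~3 of Definition~\ref{def:M-model QLP}) to agreement on \emph{all} variables occurring in $A$, including those bound inside terms---then your term fact applies verbatim and your induction closes---or constrain the uniform verifier so that $(t\forall x)^v$ does not depend on $v(x)$ (for instance, make $\forall^\I$ ignore its second argument), which restores the coincidence $t^v=t^w$ that your $t:A$ case needs. Note that the paper is exposed to the same problem: its soundness proof for the uniformity formula invokes this very lemma for the formula $(\forall x)t:A$ and the valuations $v$ and $v{y \choose r_0}$, assuming only that $y$ does not occur \emph{free} in $t$ or $A$, so if $y$ occurs bound in $t$ the same counterexample mechanism applies there. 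In short, your identification of the interaction between the binding convention for $(t\forall x)$ and its evaluation clause as the crux was exactly right; it is the one point at which the lemma, as stated, cannot be proved.
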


Since M-models of $\QLP^-$ are single-world Fitting models of $\QLP^-$, the soundness theorem of $\QLP^-$ with respect to M-models is a consequence of Fitting's soundness theorem in \cite{Fitting2008}.

\begin{theorem}[Soundness $\QLP^-$]
Every formula provable in $\QLP^-_\mathcal{F}$ is valid in every M-model of $\QLP^-_\mathcal{F}$.
\end{theorem}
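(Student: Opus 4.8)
The plan is to proceed by induction on the length of a derivation in $\QLP^-_\mathcal{F}$, fixing an arbitrary M-model $\M=(D,\I,\E,\V)$ and showing that every axiom is forced at every valuation and that each rule preserves this property. A shortcut is also available: since every M-model is literally a single-world Fitting model of $\QLP^-$, validity in all M-models is a special case of validity in all Fitting models, so the statement follows at once from Fitting's soundness theorem \cite{Fitting2008}. I sketch the self-contained inductive argument, which makes the role of each M-model condition explicit.

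The propositional and justification axioms are the routine part. Propositional tautologies hold at every $v$ because clauses 3--6 of Definition \ref{def:forcing M-model QLP^-} interpret the connectives classically. For jK, from $\M\Vdash_v s:(A\r B)$ and $\M\Vdash_v t:A$ one gets $A\r B\in\E(s^v,v)$ and $A\in\E(t^v,v)$, whence the Application condition yields $B\in\E((s\cdot t)^v,v)$, and $\M\Vdash_v B$ since $A$ and $A\r B$ are forced; thus $\M\Vdash_v (s\cdot t):B$. The Sum axioms use the Sum condition, j4 uses the Proof checker condition (giving $t:A\in\E((!t)^v,v)$) together with the already-derived forcing of $t:A$, and jT is immediate from the second conjunct of clause 9 of Definition \ref{def:forcing M-model QLP^-}.

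The crux, and the step I expect to be the main obstacle, is the quantifier axioms Q1--Q4, which rest on a \emph{semantic substitution lemma}: for $t$ free for $x$ in $A$, $\M\Vdash_v A[t/x]$ iff $\M\Vdash_{v{x \choose r}} A$ where $r=t^v$. I would prove this by induction on $A$, using the term-evaluation identity $(s[t/x])^v=s^{v{x \choose r}}$ in the case $A=s:B$ so that the reason $(s[t/x])^v$ appearing in clause 9 coincides with $s^{v{x \choose r}}$. The genuinely delicate point is to match the evidence condition $B[t/x]\in\E((s[t/x])^v,v)$ against $B\in\E(s^{v{x \choose r}},v{x \choose r})$: this requires controlling how $\E$ depends on the valuation, i.e. an invariance of evidence under valuations that agree on the free variables of the formula in question. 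This is exactly the coherence built into Definition \ref{def:M-model QLP} for $\QLP$ by condition 3(c) and recorded in Lemma \ref{lem:truth agree on the free variables M-model QLP}; for $\QLP^-$ one needs the corresponding property, which is precisely why inheriting soundness from Fitting's Fitting-model theorem is the cleanest route. Granting the substitution lemma, Q1 and Q3 follow by instantiating at $r=t^v$, while Q2 and Q4 follow from clauses 7 and 8 together with the free-variable invariance applied to the formula in which $x$ does not occur free.

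For the rules, Modus Ponens preserves forcing by clause 6; Generalization does so because if $A$ is forced at every valuation then it is in particular forced at every $x$-variant of a given $v$, so $\M\Vdash_v(\forall x)A$ by clause 7; and Axiom Necessitation preserves it because, for $f(x_1,\ldots,x_n):A\in\mathcal{F}$, the Primitive proof term condition gives $A\in\E(f(x_1,\ldots,x_n)^v,v)$ while $A$ is forced by the induction hypothesis, so clause 9 delivers $\M\Vdash_v f(x_1,\ldots,x_n):A$.
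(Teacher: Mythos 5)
Your proposal is correct, and your ``shortcut'' is in fact the paper's entire proof: the paper simply observes that M-models of $\QLP^-$ are single-world Fitting models and invokes Fitting's soundness theorem from \cite{Fitting2008}, with no separate argument given. Everything else in your proposal is additional material, and your diagnosis of where the self-contained induction strains is exactly right, indeed slightly understated. Definition \ref{def:M-model QLP^-} imposes only the Application, Sum, Proof checker, and Primitive proof term conditions on $\E$; the valuation-invariance condition appears only in Definition \ref{def:M-model QLP}, for full $\QLP$. So for $\QLP^-$ there is nothing at all controlling the dependence of $\E$ on its valuation argument, and the semantic substitution lemma you need for Q1 and Q3 --- matching $B[t/x]\in\E\bigl((s[t/x])^v,v\bigr)$ against $B\in\E\bigl(s^{v {x \choose r}},v {x \choose r}\bigr)$ with $r=t^v$ --- cannot be proved from the printed clauses: the term-evaluation identity gives you the right reason in $D$, but the two evidence statements involve different valuations (and different formulas). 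Moreover, even the $\QLP$ condition you point to would not quite close this: agreement of $\E$ under valuations that agree on free variables handles only the case where $x$ is not free in the relevant formula, whereas the substitution lemma needs a genuine substitution-invariance property of $\E$, namely $B[t/x]\in\E(u,v)$ iff $B\in\E\bigl(u,v {x \choose t^v}\bigr)$, which is stronger. So the direct induction goes through only if one either adds such a condition to the definition of M-model or inherits it from the conditions Fitting builds into his models; as a stand-alone argument relative to the paper's own definitions it has a real gap, which you correctly flagged, and the reduction to Fitting's theorem is not merely the cleanest route but the only one the paper's definitions support as written.
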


The definition of interpretation of the uniform verifier, $\forall^\I$, given in Definition \ref{def:M-model QLP} is different from that given  by Fitting for Kripke-style models of $\QLP$ in \cite{Fitting2008}. Thus, we prove the soundness theorem for $\QLP$ in more details.

\begin{theorem}[Soundness $\QLP$]
Every formula provable in $\QLP_\mathcal{F}$ is valid in every strong M-model of $\QLP_\mathcal{F}$.
\end{theorem}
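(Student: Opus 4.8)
The plan is to argue by induction on the length of a $\QLP_\mathcal{F}$-derivation of $F$, showing that every line is forced by every valuation in an arbitrary strong M-model $\M=(D,\I,\E,\V)$ of $\QLP_\mathcal{F}$. Since every strong M-model of $\QLP_\mathcal{F}$ is in particular an M-model of $\QLP^-_\mathcal{F}$, the cases in which $F$ is an axiom of $\QLP^-$ or is obtained by Modus Ponens, Generalization, or Axiom Necessitation are handled exactly as in the soundness theorem for $\QLP^-$ already cited above. Hence only two genuinely new cases remain: the uniformity formula UF and the rule qNec, and these are where the extra semantic clauses of Definition \ref{def:M-model QLP} and the strong-model condition of Definition \ref{def:strong M-model QLP} come into play.

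For UF I would fix a valuation $v$ and show $\M\Vdash_v (\exists y)y:(\forall x) t:A \r (t\forall x):(\forall x)A$, using that $y$ is free neither in $t$ nor in $A$. Assume $\M\Vdash_v (\exists y)y:(\forall x)t:A$; then for some $r\in D$ we have $\M\Vdash_{v {y \choose r}}y:(\forall x)t:A$, so by the justification clause of Definition \ref{def:forcing M-model QLP^-} also $\M\Vdash_{v {y \choose r}}(\forall x)t:A$. Unfolding the universal quantifier and the justification clause once more, for every $s\in D$ we obtain both $\M\Vdash_{v {y \choose r}{x \choose s}}A$ and $A\in\E(t^{v {y \choose r}{x \choose s}},v {y \choose r}{x \choose s})$. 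Because $y$ occurs free neither in $A$ nor in $t$, the valuations $v {y \choose r}{x \choose s}$ and $v {x \choose s}$ agree on the free variables of $A$ and $t^{v {y \choose r}{x \choose s}}=t^{v {x \choose s}}$, so condition 3 of Definition \ref{def:M-model QLP} lets me rewrite this as $A\in\E(t^{v {x \choose s}},v {x \choose s})$ for every $s\in D$. Condition 2 of Definition \ref{def:M-model QLP} then delivers the evidence part $(\forall x)A\in\E((t\forall x)^v,v)$. For the truth part, Lemma \ref{lem:truth agree on the free variables M-model QLP} (since $y$ is not free in $A$) turns $\M\Vdash_{v {y \choose r}{x \choose s}}A$ into $\M\Vdash_{v {x \choose s}}A$ for every $s$, that is $\M\Vdash_v(\forall x)A$. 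Combining the two parts gives $\M\Vdash_v (t\forall x):(\forall x)A$.

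For qNec, suppose $F=(\exists x)x:A$ is inferred from a premise $A$ with $x$ not free in $A$; by the induction hypothesis $A$ is valid in $\M$, so $\M\Vdash_w A$ for every valuation $w$. Fix an arbitrary $v$. Applying the strong-model condition of Definition \ref{def:strong M-model QLP} to $\M\Vdash_v A$ yields some $r\in D$ with $A\in\E(r,v)$; since $x$ is not free in $A$, condition 3 of Definition \ref{def:M-model QLP} gives $A\in\E(r,v {x \choose r})$, and $x^{v {x \choose r}}=r$. As $A$ is valid we also have $\M\Vdash_{v {x \choose r}}A$, so the justification clause yields $\M\Vdash_{v {x \choose r}}x:A$ and hence $\M\Vdash_v(\exists x)x:A$.

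I expect the UF case to be the main obstacle: the delicate point is the evidence bookkeeping, namely stripping the spurious dependence on the bound variable $y$ from both the evidence sets and the term valuation (via the non-free-occurrence hypotheses together with condition 3) so that the antecedent of condition 2 becomes literally available, and keeping the truth part in step through Lemma \ref{lem:truth agree on the free variables M-model QLP}. By contrast, the qNec case is comparatively routine once one observes that the strong-model condition is precisely what supplies the witnessing reason $r$ for the existential justification $(\exists x)x:A$.
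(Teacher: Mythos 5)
Your proof is correct and takes essentially the same approach as the paper's: both reduce the theorem to checking the uniformity formula UF (via the non-free-occurrence of $y$, the evidence-closure condition for the uniform verifier, and Lemma \ref{lem:truth agree on the free variables M-model QLP}) and the rule qNec (via the strong-model condition supplying the witnessing reason). The only cosmetic difference is that in the UF case the paper transfers the truth of $(\forall x)t:A$ from the witnessing valuation back to $v$ first and then unfolds, whereas you unfold first and transfer the evidence and truth parts separately; both orderings are sound.
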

\begin{proof}
We only show the validity of the uniformity formula. The proof of the validity of other axioms are straightforward. Suppose $\M=(D,\I,\E,\V)$ is an strong M-model for $\QLP_\mathcal{F}$, and $v$ is an arbitrary valuation, such that $\M \Vdash_v (\exists y) y: (\forall x) t:A$, where $y$ does not occur free in $t$ or $A$. Then, for some $r_0\in D$, $\M \Vdash_{w}  y: (\forall x) t:A$, where $w=v {y \choose r_0}$. Hence, $\M \Vdash_{w}  (\forall x) t:A$. By Lemma \ref{lem:truth agree on the free variables M-model QLP}, since $y$ does not occur free in $t$ or $A$, we have $\M \Vdash_v  (\forall x) t:A$. Therefore, for every $r\in D$, $\M \Vdash_{v {x \choose r}} t:A$. It follows that, for every $r\in D$, $A\in\E(t^{v {x \choose r}} , v {x \choose r})$ and $\M \Vdash_{v {x \choose r}} A$. Thus, $(\forall x) A\in\E((t\forall x)^v , v)$ and $\M \Vdash_{v } (\forall x) A$. Hence, $\M\Vdash_v (t \forall x):(\forall x) A$.

Now we show that the rule qNec preserves validity. The proof of the validity preserving of other rules are straightforward. Suppose $A$ is valid in every strong M-model of $\QLP_\mathcal{F}$. Let $\M=(D,\I,\E,\V)$ be an strong M-model for $\QLP_\mathcal{F}$, and $v$ be an arbitrary valuation. We will show $\M\Vdash_v (\exists x) x:A$,  where $x$ does not occur free  $A$. By the hypothesis, $\M\Vdash_v A$. Since $\M$ is strong, $A\in\E(r,v)$, for some $r\in D$. Since $x$ does not occur free  $A$, the valuations $v$ and $v {x \choose r}$ agree on free variables of $A$, and hence $A\in\E(r,v {x \choose r})$. Thus, $\M\Vdash_{v {x \choose r}} x:A$, and so $\M\Vdash_v (\exists x) x:A$.
\qed
\end{proof}

Extending M-models to multi-agent quantified logic of proofs $\QLP^-_n(\mathcal{F})$, with primitive term specification $\mathcal{F}$, is straightforward. An M-model for $\QLP^-_n(\mathcal{F})$ is a quadruple $\M=(D,\I,\E,\V)$ such that $D$, $\I$, and $\V$ are defined as in Definition \ref{def:M-model QLP^-}, but now $\E$ is a mapping from the set of agents $\mathcal{A}$ to evidence functions, i.e. for each $i\in\mathcal{A}$, $\E(i)$ (or $\E_i$ for short) is an evidence function as defined in Definition \ref{def:M-model QLP^-} satisfying Application, Sum, Proof checker, and Primitive proof term conditions. Moreover, clause 9 in the definition of forcing relation should be replaced by
\begin{center}
$\M\Vdash_v t:_i A$ if{f} $A\in {\mathcal E}_i(t^v,v)$ and $\M\Vdash_v A$.
\end{center}

Now it is easy to show the following.
\begin{theorem}[Soundness $\QLP^-_n$]
Every formula provable in $\QLP^-_n(\mathcal{F})$ is valid in every M-model of $\QLP^-_n(\mathcal{F})$.
\end{theorem}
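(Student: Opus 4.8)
The plan is to prove soundness by induction on the length of a derivation of a formula $F$ in $\QLP^-_n(\mathcal{F})$, fixing throughout an arbitrary M-model $\M=(D,\I,\E,\V)$ for $\QLP^-_n(\mathcal{F})$ and an arbitrary valuation $v$, and showing that $\M\Vdash_v F$. The guiding observation is that $\QLP^-_n$ differs from $\QLP^-$ only in that each justification operator $:$ is replaced by an agent-indexed operator $:_s$, and that each $\E_s$ satisfies precisely the same closure conditions (Application, Sum, Proof checker, Primitive proof term) that the single evidence function $\E$ satisfied in the $\QLP^-$ case. Consequently the entire argument reduces to running the single-agent soundness proof separately for whichever agent happens to occur in the axiom or rule under consideration.

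First I would dispatch the non-justification axioms, which are insensitive to the agent index and therefore carry over verbatim from the single-agent case. Propositional tautologies are valid because the forcing clauses for $\neg$, $\w$, $\vee$ and $\r$ are truth-functional, and the quantifier axioms Q1--Q4 are valid by the standard semantics of $(\forall x)$ and $(\exists x)$ together with the free-variable agreement property (the multi-agent analogue of Lemma \ref{lem:truth agree on the free variables M-model QLP}, whose proof is unaffected by passing to several evidence functions).

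Next I would treat the justification axioms, each of which mentions a single agent $s$. For jK I use the Application condition on $\E_s$; for Sum the Sum condition on $\E_s$; for jT the built-in factivity in the forcing clause for $t:_s A$ (namely that $\M\Vdash_v t:_s A$ already requires $\M\Vdash_v A$); and for j4 the Proof checker condition on $\E_s$ combined with that same factivity. In every case this is the identical single-agent computation with $\E$ replaced by $\E_s$. The inference rules are equally routine: Modus Ponens preserves validity via the clause for $\r$, Generalization preserves validity via the clause for $(\forall x)$, and the Axiom Necessitation rule, which produces $f(x_1,\ldots,x_n):_s A$ for an axiom instance $A$ with $f(x_1,\ldots,x_n):_s A\in\mathcal{F}$, is handled by the agent-indexed Primitive proof term condition, yielding $A\in\E_s(f(x_1,\ldots,x_n)^v,v)$, which combined with the validity of the axiom $A$ from the base cases gives $\M\Vdash_v f(x_1,\ldots,x_n):_s A$.

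I do not expect any genuine obstacle here; the result is stated as ``easy'' for good reason. The only point deserving a moment's care is to confirm that no justification axiom and no inference rule ever couples two distinct agents, so that each case is settled entirely within a single evidence function $\E_s$, exactly mirroring the proof of the $\QLP^-$ soundness theorem. Once this is verified, the induction closes and every provable formula is seen to be valid in every M-model of $\QLP^-_n(\mathcal{F})$.
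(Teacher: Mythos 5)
Your proof is correct, and your guiding observation --- that no axiom or rule of $\QLP^-_n$ ever couples two distinct agents, so every case of the induction is settled inside a single evidence function $\E_s$ by the corresponding single-agent computation --- is precisely why the paper states this theorem with no proof at all (it is introduced only by ``Now it is easy to show the following''). The comparison worth making is with the paper's treatment of the single-agent case: there, soundness of $\QLP^-$ is not proved by induction either, but obtained by remarking that M-models are single-world Fitting models, so that Fitting's soundness theorem from \cite{Fitting2008} applies; your direct induction is therefore more self-contained, at the cost of redoing work the paper outsources to the literature. Two details deserve care if you write the induction out in full. First, the induction hypothesis must be ``$F$ is forced under \emph{every} valuation'' (validity in $\M$), not forcing at one fixed $v$: the Gen case needs $\M\Vdash_{v{x \choose r}}A$ for all $r\in D$, which a single-valuation hypothesis does not supply; your later phrase ``Generalization preserves validity'' suggests you intend the stronger hypothesis, but your opening (fixing $v$ once and for all) says otherwise. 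Second, for Q1 and Q3 the ``standard semantics'' step is really a substitution lemma, $\M\Vdash_v A(t)$ iff $\M\Vdash_{v{x \choose t^v}}A(x)$, and proving it for formulas containing justification assertions requires the evidence functions to interact with valuations in a way the paper postulates only for full $\QLP$ models (clause 3 of Definition \ref{def:M-model QLP}), not for $\QLP^-$ ones; the free-variable agreement property you cite covers Q2 and Q4 but not, by itself, Q1 and Q3. That mismatch is inherited from the paper's compressed definitions (which implicitly defer to Fitting's), so it is not a defect of your argument, but a complete write-up should state explicitly the conditions on each $\E_s$ under which the substitution lemma holds.
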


In the sequel, as in Section \ref{sec:The Surprise Test Paradaox in QLP}, we consider a class with only one student $s$ and use  $``:"$ instead of $``:_s"$. Thus, for simplicity, we reason in $\QLP^-$ instead of $\QLP^-_n$.

Our purpose is now to show Theorems \ref{thm:Knower Paradox in QLP^-}, \ref{thm:The Examiner Paradox in QLP^-}, \ref{thm:self one-day surprise without unless QLP^-}, and \ref{thm:self-reference tow-day Surprise Test Paradox in QLP^-} by constructing countermodels (using the soundness theorem). These theorems are restated here for convenience:

\begin{equation}\label{eq:The Knower Paradox in QLP^-}
 \QLP^-(\delta \leftrightarrow \neg (\exists x)x:\delta)_\emptyset\not\vdash\bot,
 \end{equation}
 \begin{equation}\label{eq:The Examiner Paradox in QLP^-}
\QLP^-(\delta \leftrightarrow [ (\exists x)x:\neg \delta\vee( E\wedge \neg (\exists x)x:(\delta\r E))])_\emptyset\not\vdash \bot,
\end{equation}
 \begin{equation}\label{eq:self one-day surprise without unless QLP^-}
  \QLP^-(\delta \leftrightarrow [  E\wedge \neg (\exists x)x:(\delta\r E)])_\emptyset\not\vdash \neg\delta.
  \end{equation}
  \begin{equation}\label{eq:self-reference tow-day Surprise Test Paradox in QLP^-}
\QLP^-(\delta \leftrightarrow [  E_1\wedge \neg (\exists x)x:(\delta\r E_1)]\veebar[  E_2\wedge \neg (\exists x)x:(\delta\wedge\neg E_1\r E_2)])_\emptyset \not\vdash \neg\delta
\end{equation}

Note that in all of the above theorems $\delta$'s are fixed point operators. Therefore we should actually extend the aforementioned semantics of $\QLP^-$ to ${\sf QLP^-(FP)}$, and give semantic interpretation for fixed point operators. But let us simply assume for now that fixed point operators are new propositional variables that are not in the language of $\QLP^-$. Recall that for $\QLP^-$-formula $F$,  $\QLP^-(F)$ denotes the extension of $\QLP^-$ by axiom $F$. Now it is easy to see the following soundness theorem for $\QLP^-(F)$.

\begin{theorem}\label{thm:soundness QLP^-(F)}
Given a fixed point axiom $F$, every formula provable in $\QLP^-(F)_\mathcal{F}$ is valid in every M-model of $\QLP^-_\mathcal{F}$ in which $F$ is valid.
\end{theorem}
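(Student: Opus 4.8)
The plan is to argue by a routine induction on the length of a derivation in $\QLP^-(F)_\mathcal{F}$, reusing the Soundness Theorem for $\QLP^-$, which already shows that every $\QLP^-$-axiom instance is valid in each M-model and that the rules MP, Gen, and AN preserve validity within each individual model. Fix an M-model $\M=(D,\I,\E,\V)$ of $\QLP^-_\mathcal{F}$ in which $F$ is valid, and (as stipulated just before the statement) treat each fixed point operator occurring in $F$ as a fresh propositional variable, so that $F$ and all formulas of $\QLP^-(F)_\mathcal{F}$ live in the language of $\QLP^-$ over this enlarged propositional signature and $\V$ simply assigns truth values to these new variables. I would then show that every $G$ with $\QLP^-(F)_\mathcal{F}\vd G$ satisfies $\M\Vdash_v G$ for every valuation $v$.

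For the base cases there are two possibilities. If $G$ is an instance of a $\QLP^-$-axiom, then $G$ is valid in every M-model of $\QLP^-_\mathcal{F}$ by the Soundness Theorem for $\QLP^-$, hence in particular in $\M$. If $G$ is the fixed point axiom $F$, then $\M\Vdash_v F$ for all $v$ holds precisely by the hypothesis that $F$ is valid in $\M$. This is the only point at which the restriction to the subclass of models validating $F$ is used.

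For the inductive steps I would check that each rule of $\QLP^-$ transfers validity inside the single model $\M$. Modus Ponens is immediate from clause~6 of the forcing definition. For Gen, if $\M\Vdash_w A$ for every valuation $w$, then in particular $\M\Vdash_{v{x \choose r}}A$ for every $r\in D$, so $\M\Vdash_v(\forall x)A$ by clause~7; thus $(\forall x)A$ is again valid in $\M$. For AN, suppose $f(x_1,\ldots,x_n):A$ is produced, so that $f(x_1,\ldots,x_n):A\in\mathcal{F}$ and $A$ is valid in $\M$ (being either a $\QLP^-$-axiom instance or $F$ itself, both already handled). The Primitive proof term condition of Definition~\ref{def:M-model QLP^-} gives $A\in\E(f(x_1,\ldots,x_n)^v,v)$ for every $v$, and since $\M\Vdash_v A$, clause~9 yields $\M\Vdash_v f(x_1,\ldots,x_n):A$. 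This completes the induction.

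I do not expect a genuine obstacle here: the content of the statement is only that enlarging $\QLP^-$ by the single extra axiom $F$ while simultaneously restricting attention to models that validate $F$ preserves soundness. The one point deserving a word of care is that validity must propagate through Gen and AN within a \emph{fixed} model rather than merely across the whole class, but this is exactly what the proof of the $\QLP^-$ soundness theorem already establishes; the present argument is thus essentially a relativization of that theorem to the subclass of M-models satisfying $F$.
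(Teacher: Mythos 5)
Your proof is correct and is precisely the routine relativized-soundness induction the paper intends: the paper itself gives no proof, presenting the theorem as ``easy to see'' after stipulating (as you do) that fixed point operators are fresh propositional variables. Your explicit handling of the one new base case (the axiom $F$, valid by hypothesis) and of MP, Gen, and AN preserving validity within a single fixed model supplies exactly what the paper leaves implicit.
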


In order to show (\ref{eq:The Knower Paradox in QLP^-})-(\ref{eq:self one-day surprise without unless QLP^-}), define M-model $\M=(D,\I,\E,\V)$ for $\QLP^-_\emptyset$ as follows:

\begin{enumerate}
\item let $D$  be an arbitrary non-empty set, and $\I$ an arbitrary interpretation on $D$,
\item let $\E(r,v)=\emptyset$ for all $r\in D$ and all valuations $v$, and
\item let $\V(\delta)=\V(E)=1$, the precise value of other propositional variables does not matter.
\end{enumerate}

 It is not difficult to show that $\M$ is a model of $\QLP^-_\emptyset$, and the following formulas are valid in $\M$
 \[\delta \leftrightarrow \neg (\exists x)x:\delta,\]
  \[\delta \leftrightarrow [ (\exists x)x:\neg \delta\vee( E\wedge \neg (\exists x)x:(\delta\r E))],\]
   \[\delta \leftrightarrow [  E\wedge \neg (\exists x)x:(\delta\r E)],\]
   \[\delta.\]
 Now, using Theorem \ref{thm:soundness QLP^-(F)} and the model $\M$,  (\ref{eq:The Knower Paradox in QLP^-})-(\ref{eq:self one-day surprise without unless QLP^-}) can be shown. In order to show (\ref{eq:self-reference tow-day Surprise Test Paradox in QLP^-}), consider the M-model $\M=(D,\I,\E,\V)$ defined as above, but in this case put $\V(\delta)=\V(E_1)=1$, and $\V(E_2)=0$. It is not difficult to show that $\M$ is a model of $\QLP^-_\emptyset$, and the following formulas  are valid in $\M$

 \[\delta \leftrightarrow [ E_1\wedge \neg (\exists x)x:(\delta\r E_1)]\veebar[ E_2\wedge \neg (\exists x)x:(\delta\wedge\neg E_1\r E_2)],\]
   \[\delta.\]
Again, using Theorem \ref{thm:soundness QLP^-(F)} and the model $\M$,  (\ref{eq:self-reference tow-day Surprise Test Paradox in QLP^-}) can be shown.


\end{document}